\newcommand{\doi}[1]{\url{http://dx.doi.org/#1}}
\newtheorem{theorem}{Theorem}[section]
\newtheorem{lemma}[theorem]{Lemma}
\newtheorem{remark}[theorem]{Remark}
\newtheorem{definition}[theorem]{Definition}
\newcommand{\R}{{\mathbb R}}
\newcommand{\N}{{\mathbb N}}
\newcommand{\mP}{\mathbb P}
\newcommand{\mS}{\mathbb S}
\newcommand{\mE}{\mathbb E}
\newcommand{\cB}{{\mathcal B}}
\newcommand{\cD}{{\cal D}}
\newcommand{\cE}{\mathcal E}
\newcommand{\cH}{{\cal H}}
\newcommand{\cK}{{\cal K}}
\newcommand{\cM}{\mathcal M}
\newcommand{\cO}{\mathcal O}
\newcommand{\cV}{\mathcal V}
\newcommand{\cW}{\mathcal W}
 \newcommand{\cS}{\mathcal S}
\newcommand{\spheroid}{{\Gamma}}
\newcommand{\inpro}[2]{\left\langle{#1},{#2}\right\rangle}
\newcommand{\inprod}[2]{\left\langle{#1},{#2}\right\rangle}
\newcommand{\inprodd}[2]{\left\langle\left\langle{#1},{#2}\right\rangle\right\rangle}
\newcommand{\norm}[2]{\left\|{#1}\right\|_{#2}}
\newcommand{\bnorm}[2]{\Big\|{#1}\Big\|_{#2}}
\newcommand{\snorm}[2]{\left|{#1}\right|_{#2}}
\newcommand{\jum}[1]{\left[#1\right]}
\newcommand{\brac}[1]{\left(#1\right)}
\newcommand{\bsb}[1]{\boldsymbol{#1}}
\newcommand{\G}{\Gamma}
\newcommand{\Om}{\Omega}
\newcommand{\al}{\alpha}
\newcommand{\eps}{\epsilon}
\newcommand{\ga}{\gamma}
\newcommand{\om}{\omega}
\newcommand{\vecx}{\boldsymbol{x}}
\newcommand{\vecy}{\boldsymbol{y}}
\newcommand{\vecn}{\boldsymbol{n}}
\newcommand{\vecu}{\boldsymbol{u}}
\newcommand{\vecv}{\boldsymbol{v}}
\newcommand{\spann}{{\rm{span}}}
\newcommand{\Varr}{{\rm{Var}}}
\newcommand{\Corr}{{\rm{Cor}}}
\newcommand{\Covv}{{\rm{Cov}}}
\newcommand{\goto}{\rightarrow}
\newcommand{\wth}{\widehat}
\newcommand{\slinN}{\sum_{\ell=0}^\infty\,\sum_{m=-\ell}^{\ell}}
\newcommand{\Ylm}{Y_{\ell,m}}
\newcommand{\vhlm}{\widehat{v}_{\ell,m}}
\newcommand{\whlm}{\widehat{w}_{\ell,m}}
\DeclareMathOperator{\divv}{{div \/}}
\newcommand{\ac}[1]{{\color{black}{#1}}}
\newcommand{\dpc}[1]{{\color{blue}{#1}}}
\newcommand{\veczero}{\boldsymbol{0}}
\title{A shape calculus based method for a transmission
problem with random interface\footnote{This work has been
funded by BMBF and the Group of Eight Australia
within the DAAD-Go8 Project ``Numerical methods for elliptic 
transmission problems on uncertain interfaces'', Project ID
56266715 and RG123838.}}
\author{Alexey Chernov\thanks{Hausdorff Center for Mathematics and Institute for Numerical Simulation, University of Bonn,
Endenicher Allee 64, Bonn 53115, Germany; }~\thanks{Current address: Department of Mathematics and Statistics, University of Reading, Whiteknights, PO Box 220,
Reading RG6 6AX, United Kingdom, \texttt{a.chernov@reading.ac.uk}}, 
Duong Pham$^\dagger$, 
Thanh Tran\thanks{School of Mathematics and Statistics, 
                The University of New South Wales, 
                Sydney 2052, Australia, \texttt{thanh.tran@unsw.edu.au}‎ }}
\date{\today}
\makeatletter \@addtoreset{equation}{section}
\begin{document}

\maketitle

\begin{abstract}
\ac{
The present work is devoted to approximation of the statistical moments of the unknown solution of a class of elliptic transmission problems in $\R^3$ with randomly perturbed interfaces. Within this model, the diffusion coefficient has a jump discontinuity across the random transmission interface which models linear diffusion in two different media separated by an uncertain surface. We apply the shape calculus approach to approximate solution's perturbation by the so-called \emph{shape derivative}, correspondingly statistical moments of the solution's perturbation are approximated by the moments of the shape derivative. We characterize the shape derivative as a solution of a related homogeneous transmission problem with nonzero jump conditions which can be solved with the aid of boundary integral equations.
We develop a rigorous theoretical framework for this method, particularly i) extending the method to the case of unbounded domains and ii) closing the gaps and clarifying and adapting results in the existing literature. The theoretical findings are supported by and illustrated in two particular examples.}
\end{abstract}

\section{Introduction}

Elliptic transmission or interface problems arise in many
fields in science and engineering, such as {tomography,} deformation of
an elastic body with inclusions, stationary groundwater flow in heterogeneous medium, 
fluid-structure interaction, scattering of an elastic body and many others.
Combined with the state-of-the-art hardware, 
advanced numerical schemes are capable of producing a highly
accurate and efficient deterministic numerical simulation,
provided that the problem data are known exactly. However,
in real applications, a complete knowledge of the problem parameters is not realistic for many reasons. First, the simulation parameters are often estimated from measurements which can be \emph{inexact} e.g. due to imperfect measurement devices. Second, the parameters are estimated based on a large but \emph{finite} number of system samples (snapshots); this information can be incomplete or stochastic. Finally, parameters of the system 
originate from a mathematical model which is itself only an
approximation of the actual process. Under such
circumstances, \emph{highly accurate results of a single
deterministic simulation for one particular set of
problem parameters are of limited use}. An important
paradigm, becoming rapidly popular over the last
years, see e.g. 
 \cite{BNobTmp07,BarSwbZol11,ChSwb13fokm,CohDVeSwb10,CohDVeSwb11,ForKor10,Git13adap,GraKuoNuySclSlo11,HrbSndSwb08sm,SwbGit11,SwbTod06} 
and references therein, is to treat the lack of knowledge
via modeling uncertain parameters as random fields. If the 
{forward} solution operator is continuous, 
the solution of the forward problem with random parameters 
becomes a well-defined random {field.} Efficient numerical approximation of the random
(or stochastic) solution and its probabilistic
characteristics, e.g. statistical moments, is a highly
non-trivial task representing numerous new interdisciplinary 
challenges: from regularity analysis and numerical analysis to 
modeling and efficient parallel large scale computing.

In this article we develop a deterministic method for numerical 
solution for a class of transmission problems with randomly 
perturbed interfaces. The equation to be solved is of the
form
\[
- \nabla\cdot(\al\nabla u) = f \quad\text{in } D_{\pm},
\]
where $D_{-}$ is a random bounded domain in 
$\R^3$ and $D_+ = \R^3 \setminus \overline{D_-}$ is its complement. 
The domains share a common random surface $\G$, and the
coefficient function $\al$ takes (in general) distinct
constant values in $D_-$ and $D_+$, respectively. The solution $u$ is subject to jump conditions across
$\G$. A precise description of the model problem is deferred until 
Section~\ref{subsec:pro}, where {a probabilistic perturbation model for the surface $\G$ 
(and thus $D_{\pm}$) will be rigorously introduced.} 
{Within this model, the transmission interface depends on the ``random event'' $\om$ 
and the parameter $\eps\geq 0$ controlling the amplitude 
of the perturbation.} Therefore, the solution $u$ depends on
 $\om$ and $\eps$, and will be denoted by $u^\eps(\om)$.
 {The case $\varepsilon = 0$ corresponds to the \emph{zero perturbation}. 
In the present paper we are aiming at estimating probabilistic properties of the solution perturbation $u^\eps(\om) - u^0$ when the perturbation parameter is small, $\eps\ll 1$.}

More precisely, we exploit the ideas from the recent publications 
\cite{Ch08sens,ChSwb13fokm,Hrb10output,HrbLi13,HrbSndSwb08sm}
and propose to approximate the statistical moments of the
solution perturbation by the moments of the linearized
solution, i.e. for a fixed (small) value of the perturbation
parameter $\epsilon$ the $k$-th order statistical moments 
of the solution perturbation are approximated by

\begin{equation}\label{equ:Mk u}
 \cM^k[u^\epsilon - u^0] \approx \epsilon^k
 \cM^k[u']
\end{equation}
{
and similarly
\begin{equation}
\cM^k[u^\epsilon - \mE[u^\epsilon]]\approx \epsilon^k
 \cM^k[u'].
\end{equation}
}

Here $u'$ is the \emph{shape derivative} of $u^\eps$
\emph{formally} understood as the linear order term in the asymptotic expansion
\begin{equation}\label{ShapeDeriv-formal}
u^\epsilon(\vecx,\omega) = u^0(\vecx) +\epsilon  u'(\vecx,\omega) + \cdots,
\qquad \epsilon \to 0,
\end{equation}
for almost all random events $\omega \in \Omega$ at a certain 
\emph{fixed} point $\vecx$ in the Euclidean space $\R^3$.
The notion of the shape derivative has been introduced 
in the context of the shape optimization (see e.g. the 
monograph \cite{SokZol92} and the references therein) and allows to quantify sensitivity of the solution of a PDE to small perturbation of the boundary. 

Although very intuitive,
\eqref{ShapeDeriv-formal} cannot be used as a rigorous
definition of $u'(\vecx,\omega)$. In particular, convergence of
the asymptotic expansion and herewith the existence of the
shape derivative is unclear. 
In the first part of this article (Section~\ref{sec:Shape
cal}) we develop a rigorous mathematical 
theory of existence of the shape derivative for the class of 
elliptic transmission problems under consideration. 
Similarly to \cite[Lemma 1]{HrbLi13}, we obtain a characterization 
of the shape derivative $u'(\vecx,\omega)$ as a solution of a 
deterministic transmission problem on a fixed interface. 
Our contribution in this section is two-fold: \textit{i)} we extend the 
notion of shape derivatives to the case of unbounded
domains,
and \textit{ii)} we fill the gaps and unclarities in existing 
literature where no rigorous discussion on existence of shape
derivatives is presented.



As mentioned above, for almost all $\omega \in \Omega$ the
shape derivative $u'(\cdot,\omega)$ is a solution of a
deterministic problem in $\mathbb R^3$ with (in general)
nonhomogeneous jump conditions but with vanishing volume source
term. 
The second contribution of this article is the analysis of
boundary element methods \cite{HsiWen08, SauSch11,
Steinbach08} which are used to solve this transmission
problem on deterministic domains with deterministic
interface. A tensorization argument is then used to obtain the approximation~\eqref{equ:Mk u} for the statistical moments.

Finally, we illustrate the accuracy of the linearization approach by 
considering two examples setting on the unit
sphere $\Gamma := \{|\vecx| = 1\}$ with uniform radial perturbation. 
The first example involves a pre-determined solution with 
radial symmetry, so that the exact and the linearized solutions 
as well as their second moments are available explicitly. 
We observe that in this particular case the 
linearization error for the second order statistical moments 
is of the order $\mathcal O(\epsilon^4)$ rather than $o(\epsilon^2)$ as confirmed by the theory. 
The second example involves non-symmetric data so that the 
linearized solution is not available explicitly. 
To solve this problem numerically we use the sparse spectral 
tensor product BEM developed in \cite{ChPham12_prep}. 
This method exploits the underlying geometry of the 
formulation and uses the basis of spherical harmonics 
being the eigenfunctions of the integral operator governing the problem.


The paper is organized as follows. Section \ref{sec:model problem} 
contains the description of the random surface perturbation 
model and the rigorous formulation of the model transmission 
problem, preceded by the details on the function spaces 
involved in the analysis. Section \ref{sec:Shape cal} 
contains the generalization of the shape calculus to the 
case of unbounded domains, definition and characterization 
of the material and shape derivatives for the underlying 
model transmission problem and a rigorous proof and 
error bounds for the approximation \eqref{equ:Mk u}.
Section \ref{sec:boundary reduction} contains the details 
of the boundary reduction for the linearized problem. 
Section \ref{sec:Examp} contains two examples, an 
analytic and a numerical, illustrating the accuracy of the method.

\section{Model elliptic transmission problem on a random 
interface}\label{sec:model problem}

{We start with some preliminary definitions and notations in Section \ref{subsec:statmom}. 
Section \ref{sec:sto int} contains the description of a model 
for the random surface perturbation. We introduce the 
randomized model problem in the strong form in Section \ref{subsec:pro}. 
The details on Sobolev spaces involved are summarized in Section \ref{subsec:Sobolev}. }

\subsection{{Bochner spaces and} statistical moments} \label{subsec:statmom}

Throughout this paper we denote by $(\Om, \Sigma,\mP)$ a generic complete probability space and let $X$ be a separable Hilbert space. For any $1\leq k \leq \infty$, the Bochner space $L^k(\Om, X)$ is defined as usual by
\begin{equation}\label{Lk-def}
L^k(\Omega,X): = 
\big\{v: \Omega\goto X, \text{ measurable}: \|v\|_{L^k(\Omega,X)} <\infty \big\}
\end{equation}
with the norm
\begin{equation}\label{Lk-norm-def}
\norm{v}{L^k(\Omega, X)} :=
\left\{
\begin{array}{cr} \displaystyle
\left(\int_{\Omega}\norm{v(\omega)}{X}^k d\mathbb
P(\omega)\right)^{1/k}, & 1\leq k < \infty,\\[3ex]
\mathop{\rm ess sup}\limits_{\omega\in \Omega}\norm{v(\omega)}{X}, & k=\infty.
\end{array}
\right.
\end{equation}
The elements of $L^k(\Omega,X)$ are called \emph{random fields}. 
We remark that a part of the subsequent analysis can be carried 
out in the more general case when $X$ is a general Banach space, 
cf. \cite{ChSwb13fokm}; in this paper we restrict to the Hilbertian 
setting which is sufficient for the purpose of this work. In particular, 
when $X_1$ and $X_2$ are two separable Hilbert spaces, their tensor 
product $X_1 \otimes X_2$ is a separable Hilbert space with the natural 
inner product extended by linearity from $\langle v\otimes a, 
w\otimes b\rangle_{X_1 \otimes X_2} = \langle v, w\rangle_{X_1}
\langle a, b\rangle_{X_2}$, cf. e.g. \cite[p. 20]{LigChe85}, 
\cite[Definition 12.3.2, p.298]{Aubin00}. In this {paper} we work 
with $k$-fold tensor products
\begin{equation}\label{Xk-def}
 X^{(k)} := X \otimes \dots \otimes X.
\end{equation}
with the natural inner product satisfying $\langle v_1\otimes \dots\otimes v_k, w_1\otimes \dots\otimes w_k\rangle_{X^{(k)}} = \langle v_1, w_1\rangle_{X} \dots \langle v_k, w_k\rangle_{X}$.

\begin{definition}
For a random field $v\in L^k(\Omega, X)$\dpc{,} 
its $k$-order moment $\cM^k[v]$ is an element of $X^{(k)}$ defined by
\begin{equation}\label{moments-def}
\cM^k [v] :=  \int_{\Omega} 
\big(
\underbrace{v(\omega)\otimes\cdots\otimes v(\omega)}_{k\textrm{-times}}
\big)
\,d\mathbb P(\omega).
\end{equation}
\end{definition}

In the case $k=1$, the statistical moment $\cM^1 [v]$ coincides with the \textit{mean value}
of $v$ and is denoted by $\mE [v]$. If $k\geq 2$, 
the statistical moment $\cM^k [v]$ is the 
\textit{$k$-point autocorrelation function} of $v$.
The quantity $\cM^k [v - \mE[v]]$ is termed the $k$-th central moment of $v$. We distinguish in particular 
second order moments: the \textit{correlation} and \textit{covariance} defined by
\begin{equation}
\Corr[v] := \cM^2[v], \qquad \Covv[v] := \cM^2[v - \mE[v]].
\end{equation}
In this paper we work with $X$ being Sobolev spaces of real-valued functions defined on a domain $U\subset\R^3$
yielding, in  particular, the representation
\begin{equation}\label{Corr-def}
{\Corr[v](\vecx,\vecy)}
:=
\int_{\Om}
v(\vecx,\om) 
v(\vecy,\om)
\,d\mP(\om),
\quad
\vecx,\vecy\in U.
\end{equation}
We observe that $\Corr[v]$ is defined on the Cartesian product $U \times U$. 
Similarly, $\cM^k[v]$ is defined on the $k$-fold Cartesian 
product $U \times \dots \times U$. {Here,} 
the dimension of the underlying domain grows rapidly with increasing moment order $k$.

\subsection{Random interfaces}\label{sec:sto int}

{Consider a fixed bounded domain $D^0_{-} \subset \R^3$ and 
let $D^0_+ := \R^3 \setminus \overline{D^0_{-}}$ be its 
complement. Then the interface $\G^0 = \overline{D^0_{-}} 
\cap \overline{D^0_{+}}$ is a closed manifold in $\R^3$. 
For the subsequent analysis we assume that $\G^0$ is at 
least of the class $C^{1,1}$. This implies
that the outward normal vector
$\bsb n^0$ to $\G^0$ is Lipshitz continuous: $\bsb n^0 \in C^{0,1}(\G^0)$. 
The partition $\R^3 = \overline{D^0_{+}} \cup \overline{D^0_{-}}$ 
and the interface $\G^0$ will be fixed throughout the paper and 
will be called the \emph{nominal partition} and \emph{nominal interface}, respectively.

In the present paper we utilize the domain perturbation model 
based {on} the \emph{speed method} (see e.g. 
the monograph \cite{SokZol92} and references therein) and random domain perturbation model from \cite{Ch08sens,ChSwb13fokm,Hrb10output,HrbLi13,HrbSndSwb08sm}. Suppose $\kappa \in L^k(\Omega, \ac{C^{0,1}(\G^0)})$ is a random field, i.e. for almost any realization $\om \in \Om$, we have $\kappa(\cdot,\om) \in \ac{C^{0,1}(\G^0)}$. For some sufficiently small, nonnegative $\eps$ we consider a family of random interfaces of} the form
\begin{equation}\label{equ:rand inter}
\G^\eps(\om)
=
\{
\vecx 
+
\eps
\kappa(\vecx,\om)\vecn^0(\vecx):
\vecx\in \G^0
\},
\quad
\om\in\Om\dpc{.}
\end{equation}
Here, the uncertainty of the surfaces $\G^\eps(\om)$ is represented by the uncertainty in $\kappa(\cdot,\om)$. 
Notice that the interface $\G^\eps(\om)|_{\eps = 0}$ is 
identical with $\G^0$ and therefore is a deterministic 
closed manifold. Moreover, 
the limit $\G^\eps(\omega) \to \G^0$ as $\eps \to 0$ is well defined in $L^k(\Om,C^{0,1})$. 
If we identify $\G^\eps$ and $\G^0$ with their graphs, then
\begin{equation}\label{Gamma-conv}
\begin{split}
\|\G^\eps - \Gamma^0\|_{L^k(\Om,C^{0,1})} 
&
= \eps \left( 
 \int_\Om\|\kappa(\cdot,\omega) \vecn^0\|_{C^{0,1}(\G^0)}^k \, d \mP(\om) \right)^{\frac{1}{k}}
\leq \ac{2}\eps \|\kappa\|_{L^k(\Om,\ac{C^{0,1}(\Gamma^0)})}\|\vecn^0\|_{C^{0,1}(\G^0)}.
\end{split}
\end{equation}
This implies that for almost all $\om \in \Om$ and a sufficiently small $\eps\geq 0$ the surface $\G^\eps(\omega)$ is a Lipshitz continuous closed manifold separating the interior domain $D^\eps_-(\om)$ and its complement $D^\eps_+(\om) := \R^3 \setminus \overline{D^\eps_-}$. \ac{The \emph{shape calculus} in Section \ref{sec:Shape cal} requires a somewhat stronger smoothness assumption on $\kappa$, namely that the realizations of $\kappa$ belong to $C^{1}(\G^0)$.}
From \eqref{equ:rand inter} we observe that the \emph{mean random interface} is represented by
\[
\mE[\G^\eps]
=
\big\{ \vecx + \eps {\mE[\kappa(\vecx,\cdot)] \bsb n^0}(\vecx), \ \vecx\in\G^0 \big\}.
\]
Without loss of generality, we may assume 
that the random perturbation amplitude $\kappa(\vecx,\om)$ is centered, i.e.,
\begin{equation}\label{equ:kappa sym}
{\mE[\kappa(\vecx,\cdot)]}
=
0
\qquad
\forall
\vecx\in\G^0.
\end{equation}
{In this case}
\[
\mE[\G^\eps]
= \G^0 \qquad \text{and} \qquad
\Covv[\kappa](\vecx,\vecy)
=
\Corr[\kappa](\vecx,\vecy).
\]

\subsection{The model problem}\label{subsec:pro}

As shown above, for a sufficiently small value 
$\eps\geq 0$ the surface perturbation model \eqref{equ:rand inter} 
generates a well defined partition of $\R^3$ into a 
bounded Lipshitz domain $D^\eps_-(\om)$ and its complement 
$D^\eps_+(\om) = \R^3 \setminus \overline{D^\eps_-}$ separated 
by the closed Lipshitz manifold $\G^\eps(\om) = \overline{D^\eps_-(\om)} \cap \overline{D^\eps_+(\om)}$. We consider a piecewise constant diffusion function subjected to this partition:
\begin{equation}\label{equ:alpha cond}
\alpha^\eps(\vecx,\omega)
=
\begin{cases}
\alpha_-, & \vecx\in D^\eps_{-}(\omega),
\\
\alpha_+, & \vecx\in D^\eps_{+}(\omega),
\end{cases}
\end{equation}
where $\alpha_-$ and $\alpha_+$ are two positive constants
independent of $\vecx$, $\eps$, and $\omega$. 
Having this we introduce the model elliptic transmission 
problem as a problem of finding $u^\eps$ satisfying
\begin{subequations}\label{equ:original prob}
\begin{align}
-\nabla\cdot\big(\alpha^\eps(\vecx,\omega)\nabla u^\eps(\vecx,\omega)\big) &
= f(\vecx)
\quad\text{in}\ D^\eps_\pm(\omega),
\label{equ:lap equ}
\\
[u^\eps(\vecx,\omega)]
&
=
0
\quad
\text{on}\ \Gamma^\eps(\omega),
\label{equ:tranmiss 1}
\\
\left[
\alpha^\eps(\vecx,\omega)
\frac{\partial u^\eps}{\partial\vecn}(\vecx,\omega)
\right]
&
= 0
\quad
\text{on}\
\Gamma^\eps(\omega),
\label{equ:tranmiss 2}
\\
u^\eps(\vecx,\omega) &
= O(\snorm{\vecx}{×}^{-1})
\quad\text{as}\ \snorm{\vecx}{×}\goto+\infty.
\label{equ:inft cond 1}
\end{align}
\end{subequations}

Here, $\partial/\partial\vecn$ denotes the normal derivative 
{on $\Gamma^\eps(\om)$, 
i.e. $\partial/\partial\vecn = \vecn^\eps(\vecx,\omega) \cdot \nabla$},
{where $\vecn^\eps(\vecx,\omega)$ is the unit normal vector to the interface
$\Gamma^\eps(\omega)$ pointing into the interior of
$D^\eps_{+}(\omega)$.}
Let $u^\eps_{-}(\omega)$ and
$u^\eps_{+}(\omega)$ be the restrictions of 
$u^\eps(\omega)$ on $D^\eps_{-}(\omega)$ and 
$D^\eps_{+}(\omega)$, respectively. Then the jump $[u^\eps(\omega)]$ is understood to be 
$u^\eps_-(\omega)-u^\eps_+(\omega)$
on $\Gamma^\eps(\omega)$ in the sense of trace for each sample $\omega$. 
Similarly 
\[
\left[
\alpha^\eps(\vecx,\omega)
\frac{\partial u^\eps}{\partial\vecn}(\vecx,\omega)
\right]
=
\alpha^\eps_-
\frac{\partial u^\eps_-}{\partial\vecn}(\vecx,\omega)
-
\alpha^\eps_+
\frac{\partial u^\eps_+}{\partial\vecn}(\vecx,\omega),
\quad
\vecx\in \G^\eps(\om).
\]
The function $f\in H^1(\R^3)$ is assumed to be independent 
of $\omega$ and {thereby represents a deterministic source} function in $\R^3$. 

{The model problem \eqref{equ:lap equ}--\eqref{equ:inft cond 1} 
represents a stationary diffusion in $\R^3$ with piecewise constant 
diffusivity in the interior and exterior domain. The uncertainty in 
the random solution $u^\eps(\vecx,\om)$ is implied by the uncertain 
location of the transmission interface $\G^\eps(\omega)$. The solution depends nonlinearly 
on the interface and a linearization process will first be used to 
linearize the initial problem. The tool in this process is shape 
calculus which will be presented in Section~\ref{sec:Shape cal}.
In what follows we address the problem of approximation of the 
statistical moments
\begin{equation}\label{moments}
\mE[u^\eps],\quad \cM^k[u^\eps - u^0], \quad \text{and} \quad 
\cM^k[u^\eps - \mE[u^\eps]], \quad k \geq 2,
\end{equation}
{with} this strategy and the rigorous control of the approximation error.

\subsection{Sobolev spaces}\label{subsec:Sobolev}

In this section we introduce function spaces 
needed for the forthcoming analysis. These spaces 
will allow to identify the unique weak {solution} of the 
model problem \eqref{equ:lap equ}--\eqref{equ:inft cond 1} 
and characterize the moments \eqref{moments}.

Let $\mathcal{G}$ be a sphere-like surface, i.e.,
there exists a diffeomorphism 
 $\rho:\mS\goto\mathcal{G}$ such that 
\[
\mathcal{G}
=
\{
\rho(\vecx) : \vecx\in \mS
\}.
\]
Here, $\mS$ is the unit sphere in $\R^3$.
The surface $\mathcal{G}$ divides $\R^3$ into two subdomains, a bounded domain 
$D_-$ and an unbounded domain $D_+$.
For any distribution $v$ defined on $\mathcal{G}$, and for any point $\rho(\vecx)$ 
on $\mathcal{G}$, we can write 
\[
{(v \circ \rho) (\vecx) = }
v(\rho(\vecx))
=
\slinN
\vhlm
\Ylm(\vecx),
\]
where 
\begin{equation}\label{equ:vhlm def G}
\vhlm
=
\int_{\mS}
(v\circ\rho)(\vecx)\,\Ylm(\vecx)\,d\sigma_{\vecx}
\end{equation}
are the Fourier coefficients of $v$. 
Here $\Ylm$ are spherical harmonics, which are the restrictions on the unit
sphere  $\mS$ of  homogeneous harmonics polynomials in $\R^3$.
The Sobolev space $H^s(\mathcal{G})$, for 
$s\in\R$, is defined by
\begin{equation}\label{equ:Sobolev def}
H^s(\mathcal{G})
=
\bigg\{ 
v \in \cD'(\mathcal{G}) : 
\slinN
(1+\ell)^{2s}
\snorm{\vhlm}{×}^2
<
+\infty
\bigg\},
\end{equation}
where $\cD'(\mathcal{G})$ is the set of distributions on $\mathcal{G}$.
The corresponding inner product and the norm are given by
\begin{equation}\label{equ:inner prod}
\inpro{v}{w}_{H^s(\mathcal{G})}
=
\slinN(1+\ell)^{2s}
\vhlm \whlm,
\quad
v,w\in H^s(\mathcal{G}),
\end{equation}
and 
\begin{equation}\label{equ:norm Sobolev}
\norm{v}{H^{s}(\mathcal{G})}
=
\brac{\slinN (1+\ell)^{2s}\snorm{\vhlm}{×}^2}^{1/2},
\quad
v\in H^s(\mathcal{G}).
\end{equation}
We note here that the inner product~\eqref{equ:inner prod} and 
the norm~\eqref{equ:norm Sobolev} satisfy
\begin{equation}\label{equ:G mS rel}
\inpro{v}{w}_{H^s(\mathcal{G})}
=
\inpro{v\circ\rho}{w\circ\rho}_{H^s(\mS)}
\quad
\text{and}
\quad
\norm{v}{H^s(\mathcal{G})}
=
\norm{v\circ\rho}{H^s(\mS)}
\end{equation}
for any $v,w\in H^s(\mathcal{G})$. 
The set 
$\{\Ylm\circ\rho^{-1}: \ell\in\N,{\ m} = -\ell,\ldots,\ell\}$
is an orthogonal basis for $H^s(\mathcal{G})$.
We also note that the space $H^0(\mathcal{G})$ {can be understood as} a 
weighted $L_2$-space on the interface $\mathcal{G}$.

We now introduce the tensor product of Sobolev spaces 
{on the $k$-fold Cartesian product domains 
$\mathcal G^k = \mathcal G \times \dots \times \mathcal G$. 
These spaces will be used later on for characterization of 
statistical moments. By boldface symbols we denote 
multiindices with $k$ integer components, e.g. $\bsb \ell = (\ell_1,\dots,\ell_k)$.
Given $s\in\R$, the Sobolev space $H_{\rm{mix}}^{s}(\mathcal{G}^k)$ is defined to be the space of all distributions $v(\vecy_1,\dots,\vecy_k)$ with $\vecy_1,\dots,\vecy_k \in \mathcal{G}
$
satisfying
\begin{equation}\label{equ:inn ten def}
\begin{split}
\norm{v}{H_{\rm{mix}}^{s}(\mathcal{G}^k)} 
&:= \inpro{v}{v}_{H_{\rm{mix}}^{s}(\mathcal{G}^k)}^{1/2} < \infty,\\
\inpro{v}{w}_{H_{\rm{mix}}^{s}(\mathcal{G}^k)}
&:=
\sum_{\bsb \ell = 0}^\infty
\sum_{
\bsb m = -\bsb \ell
}^{\bsb \ell}
\left(
\prod_{i=1}^k
(1+\ell_i)^{2s}
\right)
\wth v_{\bsb \ell,\bsb m}
\wth w_{\bsb \ell,\bsb m}
\end{split}
\end{equation}
}
with the Fourier coefficients 
\begin{equation}\label{equ:ab 7}
\wth v_{\bsb \ell,\bsb m}
:=
\int_{\vecx_1 \in \mS} \dots 
\int_{\vecx_k \in \mS}
v(\rho(\vecx_1),\dots,\rho(\vecx_k))
\, \left(\prod_{i=1}^k Y_{\ell_i,m_i}(\vecx_i) \right)
\,d\sigma_{\vecx_1}
\dots
d\sigma_{\vecx_k}
\end{equation}
{Recalling definition \eqref{Xk-def} we observe 
that $H^s_{\rm mix} (\mathcal{G}^k)$ is isometrically 
isomorphic to the tensor product space $H^s(\mathcal{G})^{(k)}$. 
These spaces will be identified in what follows. \ac{We also use the notation $H^s_{\rm mix}(K^k)$ for the tensor product $H^s(K)^{(k)}$ where $K$ is a compact subset of $\R^3$.}

Sobolev spaces on bounded domains in $\R^3$ are defined, as usual, as
spaces of all distributions whose partial 
derivatives are square integrable.
Proper treatment of the transmission problem \eqref{equ:lap equ}--\eqref{equ:inft cond 1} in unbounded domains in $\R^3$ requires a special care. Following \cite{SauSch11}, for an
unbounded domain $U \subset \R^3$ we introduce the space} 
\begin{equation}\label{equ:W Sobolev}
H_w^1(U)
:=
\bigg\{
v\in\cD'(U) : 
\norm{v}{H_w^1(U)}
=
\brac{\int_{U}\Big{(} \snorm{\nabla  v}{×}^2
+
\frac{\snorm{v(\vecx)}{×}^2}{1+\snorm{\vecx}{×}^2}\Big{)}\,d\vecx}^{1/2}
<
+\infty
\bigg\}.
\end{equation}
{Specifically, for a given partition $\R^3 = \overline{D^\eps_-} 
\cup \overline{D^\eps_+}$ we define the space
\begin{equation}\label{equ:W def}
W_\eps
:=
\big\{
v = (v_-,v_+)\in H^1(D^\eps_-)\times H_w^1(D^\eps_+) : \jum{v}_{\Gamma^\eps}
=
0
\big\}
\end{equation}
which is a weighted Sobolev space on $D^\eps_-\cup D^\eps_+$
with corresponding norm and seminorm
\begin{equation}\label{equ:W norm}
\norm{v}{W_\eps}
:=
\left(
\norm{v_-}{H^1(D^\eps_-)}^2
+
\norm{v_+}{H_w^1(D^\eps_+)}^2
\right)^{1/2},
\quad
\snorm{v}{W_\eps}
:=
\left(
\int_{D^\eps_-}
\snorm{\nabla v_-}{}^2\,
d\vecx
+
\int_{D^\eps_+}
\snorm{\nabla v_+}{}^2\,
d\vecx
\right)^{1/2}.
\end{equation}
}

The following lemma which will be frequently used in 
the rest of the paper states the equivalence between the 
norm $\norm{\cdot}{W_\eps}$ and seminorm {$\snorm{\cdot}{W_\eps}$. 
The proof of this result follows by the Friedrichs} inequality and 
the technique in the proof of~\cite[Theorem 2.10.10]{SauSch11}.

\begin{lemma}\label{lem:nor snor}
The seminorm {$\snorm{\cdot}{W_\eps}$} is also a norm in {$W_\eps$} which is equivalent to
{$\norm{\cdot}{W_\eps}$}.
\end{lemma}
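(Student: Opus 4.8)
The plan is to establish the two-sided estimate $c\,\snorm{v}{W_\eps}\le\norm{v}{W_\eps}\le C\,\snorm{v}{W_\eps}$ for all $v\in W_\eps$; once this equivalence is in place, positive definiteness of $\snorm{\cdot}{W_\eps}$ (and hence the fact that it is a genuine norm) follows automatically, since $\norm{\cdot}{W_\eps}$ is already a norm. The lower estimate $\snorm{v}{W_\eps}\le\norm{v}{W_\eps}$ is immediate from \eqref{equ:W norm}, because the seminorm is obtained from the full norm merely by discarding the two nonnegative lower-order contributions $\int_{D^\eps_-}|v_-|^2\,d\vecx$ and $\int_{D^\eps_+}\frac{|v_+|^2}{1+|\vecx|^2}\,d\vecx$. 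All the substance is therefore in the reverse inequality, i.e. in controlling these two lower-order terms by the gradient seminorm.

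The first step I would take exploits the structural constraint in \eqref{equ:W def}: the zero-jump condition $\jum{v}_{\Gamma^\eps}=0$ lets me glue the pair $v=(v_-,v_+)$ into a single function on all of $\R^3$. Since $\Gamma^\eps$ is a closed Lipschitz manifold and the traces of $v_-$ and $v_+$ coincide on it, testing against $C_c^\infty(\R^3)$ and integrating by parts shows that the interface contributions cancel, so the glued function lies in $H^1_{\rm loc}(\R^3)$ with $\nabla v=\nabla v_\pm$ on $D^\eps_\pm$. Combined with $v_-\in H^1(D^\eps_-)$ on the bounded piece and $v_+\in H_w^1(D^\eps_+)$ this gives $v\in H_w^1(\R^3)$ with $\snorm{v}{W_\eps}^2=\int_{\R^3}|\nabla v|^2\,d\vecx$.

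The crux is then a single global weighted Poincar\'e--Friedrichs (Hardy-type) inequality on $\R^3$,
\[
\int_{\R^3}\frac{|v(\vecx)|^2}{1+|\vecx|^2}\,d\vecx \;\le\; C\int_{\R^3}|\nabla v|^2\,d\vecx,
\qquad v\in H_w^1(\R^3),
\]
which is precisely the ingredient supplied by the technique in the proof of \cite[Theorem 2.10.10]{SauSch11}. I would prove it by splitting $\R^3$ into $\{|\vecx|\ge 1\}$, where the weight is comparable to $|\vecx|^{-2}$ and the classical Hardy inequality applies, and $\{|\vecx|\le 1\}$, where the weight is bounded and $\int_{|\vecx|\le1}|v|^2$ is controlled by $\|\nabla v\|_{L^2(\R^3)}^2$ through the Sobolev embedding $\dot H^1(\R^3)\hookrightarrow L^6(\R^3)$. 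The essential difficulty here — and the main obstacle of the whole lemma — is that no compactness is available on the unbounded exterior, so the decay at infinity encoded in the weight must be used directly; this is exactly why the Hardy/Sobolev route is needed rather than a Rellich contradiction argument.

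With this inequality at hand the reverse estimate falls out. The exterior term is bounded at once, $\int_{D^\eps_+}\frac{|v_+|^2}{1+|\vecx|^2}\,d\vecx\le\int_{\R^3}\frac{|v|^2}{1+|\vecx|^2}\,d\vecx\le C\,\snorm{v}{W_\eps}^2$. For the interior term I use that $D^\eps_-$ is bounded, say $D^\eps_-\subset\{|\vecx|\le R\}$, so the weight is bounded below there and $\int_{D^\eps_-}|v_-|^2\,d\vecx\le(1+R^2)\int_{D^\eps_-}\frac{|v_-|^2}{1+|\vecx|^2}\,d\vecx\le C\,\snorm{v}{W_\eps}^2$; alternatively, this interior bound may be obtained from the Friedrichs inequality on the bounded Lipschitz domain $D^\eps_-$ after the interface trace has been controlled by the exterior estimate. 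Adding these to the gradient contributions yields $\norm{v}{W_\eps}^2\le C\,\snorm{v}{W_\eps}^2$, which completes the equivalence.
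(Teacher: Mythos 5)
Your proof is correct and follows essentially the route the paper itself indicates: the paper's one-line proof invokes the Friedrichs inequality together with the technique of \cite[Theorem 2.10.10]{SauSch11}, which is exactly your weighted Hardy-type Poincar\'e inequality on the unbounded part combined with the boundedness of $D^\eps_-$ (your gluing step via $\jum{v}_{\Gamma^\eps}=0$ and the extension of Hardy's inequality by density of $C_0^\infty(\R^3)$ in $H^1_w(\R^3)$, cf.\ \cite[Remark~2.9.3]{SauSch11}, are the details the paper leaves implicit). No gaps; the deduction that $\snorm{\cdot}{W_\eps}$ is positive definite from the two-sided equivalence with $\norm{\cdot}{W_\eps}$ is also sound.
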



\section{Shape calculus}\label{sec:Shape cal}

{The aim of the present section is the systematic development of the
linearization theory for the solution $u^\eps$ of the model problem
\eqref{equ:lap equ}--\eqref{equ:inft cond 1} with respect to the shape of
the perturbed interface $\G^\eps$. This techniques is also known as
\emph{shape calculus} and originates from shape optimization; see \cite{SokZol92} and references therein. For this purpose, in} the first three subsections that follow, we temporarily stay 
away from randomness and consider only deterministic perturbed interfaces.
\subsection{Perturbation of deterministic interfaces}
 
In this subsection we {collect several} properties 
of perturbed interfaces {which are important for the subsequent analysis}.
Assume that the perturbation function $\kappa$ is a {fixed}
deterministic function {in $W^{1,\infty}(\Gamma^0)$, in particular} $\kappa$ is independent of
$\om$. Then $\G^\eps$ is defined by
\begin{equation}\label{equ:pert inter}
\G^{\eps}
:=
\{
\vecx
+
\eps \kappa(\vecx)\vecn^0(\vecx)
:
\vecx\in \G^0
\},
\quad
\eps > 0.
\end{equation}
{As already noticed in Section \ref{sec:sto int}, $\G^\eps$ 
is a closed Lipshitz manifold in $\R^3$ provided 
$0 \leq \eps \leq \eps_0$ and $\eps_0$ is sufficiently small.  
In this case $\G^\eps$ introduces a decomposition of $\R^3$ 
into the interior and exterior subdomains $D_-^\eps$ and $D_+^\eps$, respectively.}

Following~\cite{SokZol92},
we define a mapping $T^\eps:\R^3\goto \R^3$ which transforms 
$\G^0$ into $\G^\eps$ and $D_\pm^0$ into $D_\pm^\eps$, respectively, by 
\begin{equation}\label{equ:T eps define}
T^\eps(\vecx)
:=
\vecx
+
\eps \tilde{\kappa}(\vecx) \tilde{\vecn}{}^0(\vecx),
\quad
\vecx\in \R^3,
\end{equation}
where $\tilde{\kappa}$ and $\tilde{\vecn}{}^0$ are any 
{smoothness-preserving} extensions of $\kappa$ and $\vecn^0$ into $\R^3$. 
{We require in particular that $\tilde\kappa\in W^{1,\infty}(\R^3)$. 
Without loss of generality we} assume that the extension $\tilde\kappa$ 
{vanishes outside a sufficiently large ball $B_{R} := \{\vecx\in\R^3 : |\vecx|<R\}$ 
containing $\Gamma^\eps$ for any $0 \leq \eps \leq \eps_0$. 
This implies that the perturbation mapping $T^{\eps}(\vecx)$ 
is an identity in the complement $B_R^c := \R^3 \setminus \overline{B_R}$, i.e.}
\begin{equation}\label{equ:T e cond}
T^{\eps}(\vecx) 
=
\vecx
\qquad
\forall \vecx
\in  {B_{R}^c}.
\end{equation}
{For the ease of notation we abbreviate} 
\begin{equation}\label{equ:V def}
V(\vecx)
:=
\tilde\kappa(\vecx)\tilde\vecn{}^0(\vecx),
\qquad 
\vecx\in \R^3.  
\end{equation}
In~\cite{SokZol92}, $V$ is called \emph{the velocity 
field} of the mapping $T^\eps$. 
The following result is straightforward.

\begin{lemma}\label{lem:V prop}
{Assuming} $\tilde\kappa\in W^{1,\infty}(\R^3)$ and
$\tilde\kappa(\vecx) = 0$ for ${\vecx \in B_R^c}$, 
there hold $V\in 
\big(H^1(\R^3)\big)^3$ 
and
\begin{equation*}
\frac{\partial^m V(\vecx)}{\partial x_l^m} = \veczero
\quad
\forall \vecx\in {B_{R}^c},
\quad
l = 1,2,3,
\quad
m=0,1.
\end{equation*}
 \end{lemma}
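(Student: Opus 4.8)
The plan is to split the statement into its two assertions and to treat the vanishing of $V$ and its first derivatives outside $B_R$ first, since this is exactly what will furnish the integrability needed for $H^1$-membership. Recall that $V = \tilde\kappa\,\tilde\vecn{}^0$, where $\tilde\kappa\in W^{1,\infty}(\R^3)$ by hypothesis and $\tilde\vecn{}^0$ is a smoothness-preserving (hence Lipschitz) extension of $\vecn^0\in C^{0,1}(\G^0)$; in particular $\tilde\vecn{}^0\in W^{1,\infty}_{\rm loc}(\R^3)$, so on the bounded ball $B_R$ both $\tilde\vecn{}^0$ and its weak gradient are essentially bounded. Since $\tilde\kappa$ and $\tilde\vecn{}^0$ each belong to $W^{1,\infty}(B_R)$, their product lies in $W^{1,\infty}(B_R)$ and the Leibniz rule holds componentwise in the weak sense: for $l=1,2,3$,
\[
\frac{\partial V}{\partial x_l} = \frac{\partial\tilde\kappa}{\partial x_l}\,\tilde\vecn{}^0 + \tilde\kappa\,\frac{\partial\tilde\vecn{}^0}{\partial x_l}.
\]

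Next I would establish the displayed vanishing relation. The case $m=0$ is immediate: by hypothesis $\tilde\kappa\equiv 0$ on the open set $B_R^c$, so $V=\tilde\kappa\,\tilde\vecn{}^0=\veczero$ there. For $m=1$ I would invoke the standard fact that a $W^{1,\infty}$ function which is identically constant (here zero) on an open set has vanishing weak gradient almost everywhere on that set; thus $\partial\tilde\kappa/\partial x_l=0$ a.e.\ on $B_R^c$ for each $l$. Substituting $\tilde\kappa=0$ and $\partial\tilde\kappa/\partial x_l=0$ into the Leibniz formula shows that each first-order partial derivative of $V$ vanishes a.e.\ on $B_R^c$, which is precisely the second assertion.

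Finally, to obtain $V\in(H^1(\R^3))^3$ I would use that $V$ and all its first partials are bounded and supported in $\overline{B_R}$. Boundedness of $V$ follows from $\abs{V}\le \norm{\tilde\kappa}{L^\infty(\R^3)}\,\norm{\tilde\vecn{}^0}{L^\infty(B_R)}$, and boundedness of the partials from the Leibniz formula together with the essential boundedness of $\tilde\kappa$, $\nabla\tilde\kappa$, $\tilde\vecn{}^0$, $\nabla\tilde\vecn{}^0$ on $B_R$. Since the support lies in $\overline{B_R}$, a set of finite Lebesgue measure, bounded functions are square integrable, whence $V$ and its first derivatives belong to $L^2(\R^3)$ and therefore $V\in(H^1(\R^3))^3$. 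The proof is genuinely routine, as the paper indicates; the only points demanding a line of justification are the availability of the weak Leibniz rule, which rests on the local $W^{1,\infty}$-regularity and boundedness of the extension $\tilde\vecn{}^0$ on $B_R$, and the a.e.\ vanishing of $\nabla\tilde\kappa$ on the open complement $B_R^c$. These are exactly the steps I would be most careful to state explicitly rather than an essential difficulty.
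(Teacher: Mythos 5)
Your proof is correct, and it supplies exactly the routine details the paper omits: the paper states this lemma without proof, remarking only that "the following result is straightforward." Your argument --- weak Leibniz rule for the product $\tilde\kappa\,\tilde\vecn{}^0$ using the (local) Lipschitz regularity of the extension, a.e.\ vanishing of $\nabla\tilde\kappa$ on the open set $B_R^c$ where $\tilde\kappa\equiv 0$ (or, even more directly, $V\equiv\veczero$ on the open set $B_R^c$), and square integrability from boundedness plus compact support in $\overline{B_R}$ --- is precisely the intended elementary verification.
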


{Recall the definition \eqref{equ:W def} of the weighted 
space $W_\eps$ associated to the splitting $\R^3 = \overline{D_-^\eps} \cup \overline{D_+^\eps}$.}
It can be proved that a function $v$ belongs to 
{$W_\eps$} if and only if the composition $v\circ T^\eps$ belongs to 
{$W_0$}, and there hold
\begin{equation}\label{equ:W Weps}
\begin{aligned}
\norm{(v^\eps)_-}{H^1(D_-^\eps)}
&
\simeq
\norm{(v^\eps\circ T^\eps)_-}{H^1(D_-^0)}
\\
\norm{(v^\eps)_+}{H_{w}^1(D_+^\eps)}
&
\simeq
\norm{(v^\eps\circ T^\eps)_+}{H_{w}^1(D_+^0)}
\\
\norm{v^\eps}{{W_\eps}}
&
\simeq
\norm{v^\eps\circ T^\eps}{W_0}.
\end{aligned}
\end{equation}
In {the subsequent analysis}, for any $3$ 
by $3$ matrix  $A(\vecx)$ 
whose entries are functionals of $\vecx\in U\subset\R^3$,
we denote
\[
\norm{A(\cdot)}{L^p(U)}
:=
\max_{i,j=1,2,3}
\{ 
\norm{A_{i,j}(\cdot)}{L^p(U)}
\},
\quad
{1\le p\le \infty},
\]
where $A_{ij}$ are components of $A$.

The following three lemmas state some important properties of 
the mapping $T^\eps$ which will be used later in this section. 
\ac{
Until the end of this section we assume that $T^\eps$ is defined by~\eqref{equ:T eps define} and~\eqref{equ:T e cond} with 
$\tilde\kappa\in C^{1}(\R^3)$, 
and denote its Jacobian matrix and Jacobian determinant by
$J_{T^\eps}$ and $\gamma(\eps,\cdot)$, respectively.
}

\begin{lemma}\label{lem:T eps prop} 
\ac{Consider} $A(\eps,\cdot):= \gamma(\eps,\cdot) J_{T^\eps}^{-1} J_{T^\eps}^{-\top}$,
where $J_{T^\eps}^{\top}$ \ac{is} the transpose of $J_{T^\eps}$. \ac{Then} there hold
\begin{equation}\label{equ:st 1}
\lim_{\eps\goto 0} \norm{A(\eps,\cdot) - I}{L^\infty(\R^3)} = 0
\end{equation}
and
\begin{equation}\label{equ:st 2}
\lim_{\eps\goto 0} \norm{\dfrac{A(\eps,\cdot) - I}{\eps} - A'(0,\cdot)}{L^2(\R^3)} = 0.
\end{equation}
Here, $A'(0,\cdot)$ is the G\^ateaux derivative of $A$ 
(determined by $T^\epsilon$) at $\eps =0$, namely
\[
A'(0,\vecx)
=
\lim_{\eps\goto 0}
\frac{A(\eps,\vecx) - I(\vecx)}{\eps},
\quad\vecx\in \R^3.
\]

\end{lemma}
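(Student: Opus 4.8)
The plan is to exploit the explicit affine-in-$\eps$ structure of the Jacobian. Since $T^\eps(\vecx) = \vecx + \eps V(\vecx)$ with $V = \tilde\kappa\,\tilde\vecn{}^0$, its Jacobian matrix is $J_{T^\eps} = I + \eps J_V$, where $J_V$ denotes the (a.e.\ defined) Jacobian of $V$. Because $\tilde\kappa\in C^1(\R^3)$ is compactly supported and $\tilde\vecn{}^0$ is Lipschitz, $V$ is Lipschitz with support in $\overline{B_R}$; hence $J_V\in \big(L^\infty(\R^3)\big)^{3\times 3}$, say $\norm{J_V}{L^\infty(\R^3)} = M$, and by Lemma~\ref{lem:V prop} together with \eqref{equ:T e cond} both $J_V$ and $J_{T^\eps}-I$ vanish on $B_R^c$. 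This compact support is exactly what will later let me pass from the natural $L^\infty$ estimates to the $L^2$ estimate demanded by \eqref{equ:st 2}.

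First I would fix a threshold $\eps_1>0$, small enough (depending on $M$ and the dimension) that the Neumann series below converges in the pointwise matrix norm for $0<\eps\le\eps_1$; this is possible because the max-entry norm used in the paper is equivalent to a submultiplicative matrix norm. Then $J_{T^\eps}=I+\eps J_V$ is invertible almost everywhere and
\[
J_{T^\eps}^{-1} = \sum_{n=0}^\infty (-\eps J_V)^n = I - \eps J_V + \eps^2 S(\eps,\cdot),
\qquad
\norm{S(\eps,\cdot)}{L^\infty(\R^3)} \le C_0,
\]
with $C_0$ independent of $\eps\in(0,\eps_1]$; transposing yields the analogous expansion for $J_{T^\eps}^{-\top}$. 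In parallel, since $\det(I+\eps J_V)$ is a cubic polynomial in $\eps$ whose coefficients are bounded polynomial expressions in the entries of $J_V$, I would write, using $\mathrm{tr}\,J_V=\divv V$,
\[
\gamma(\eps,\cdot) = \det(I+\eps J_V) = 1 + \eps\,\divv V + \eps^2 g(\eps,\cdot),
\]
with $\norm{g(\eps,\cdot)}{L^\infty(\R^3)}$ bounded uniformly on $[0,\eps_1]$.

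Next I would substitute these three expansions into $A(\eps,\cdot)=\gamma(\eps,\cdot)\,J_{T^\eps}^{-1}J_{T^\eps}^{-\top}$ and collect powers of $\eps$. The zeroth-order term is $I$, the first-order term is $A'(0,\cdot):=(\divv V)\,I-J_V-J_V^\top$, and all remaining contributions carry a factor $\eps^2$; by the uniform $L^\infty$ bounds on $S(\eps,\cdot)$, $g(\eps,\cdot)$, $J_V$ and their products they combine into a remainder $R(\eps,\cdot)$ with $\norm{R(\eps,\cdot)}{L^\infty(\R^3)}\le C$ independent of $\eps\in(0,\eps_1]$, so that
\[
A(\eps,\cdot) = I + \eps\,A'(0,\cdot) + \eps^2 R(\eps,\cdot).
\]
This identity also identifies $A'(0,\cdot)$ as the pointwise, hence G\^ateaux, derivative at $\eps=0$, since $\big(A(\eps,\vecx)-I\big)/\eps \to A'(0,\vecx)$ for a.e.\ $\vecx$.

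The two claims now follow. For \eqref{equ:st 1}, $\norm{A(\eps,\cdot)-I}{L^\infty(\R^3)}\le \eps\,\norm{A'(0,\cdot)}{L^\infty(\R^3)} + C\eps^2 \to 0$, using $A'(0,\cdot)\in L^\infty$. For \eqref{equ:st 2}, both $A'(0,\cdot)$ and $R(\eps,\cdot)$ are supported in $\overline{B_R}$, which has finite Lebesgue measure, so
\[
\norm{\frac{A(\eps,\cdot)-I}{\eps}-A'(0,\cdot)}{L^2(\R^3)}
= \eps\,\norm{R(\eps,\cdot)}{L^2(\overline{B_R})}
\le \eps\,|B_R|^{1/2}\,\norm{R(\eps,\cdot)}{L^\infty(\R^3)}
\le C\,|B_R|^{1/2}\,\eps \to 0 .
\]
The main obstacle is the uniform-in-$\eps$ (and in $\vecx$) control of the second-order remainder: it rests on $J_V\in L^\infty$, which uses the $C^1$ (hence locally Lipschitz, compactly supported) regularity of $\tilde\kappa$ and the Lipschitz regularity of $\tilde\vecn{}^0$, and on the compact support of the velocity field to convert the $L^\infty$ estimate into the $L^2$ estimate required by \eqref{equ:st 2}.
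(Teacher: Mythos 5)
Your proof is correct, and it rests on the same two pillars as the paper's ($J_V\in L^\infty(\R^3)$ and the vanishing of $V$ outside $B_R$), but the expansion mechanics are genuinely different. You invert $J_{T^\eps}=I+\eps J_V$ by a Neumann series, truncate after the linear term with a uniformly bounded tail $S(\eps,\cdot)$, and similarly expand $\det(I+\eps J_V)$, obtaining $A(\eps,\cdot)=I+\eps A'(0,\cdot)+\eps^2 R(\eps,\cdot)$ with $\norm{R(\eps,\cdot)}{L^\infty(\R^3)}$ uniformly bounded. The paper avoids infinite series altogether: by the adjugate formula the entries of $A$ have the \emph{exact} rational form $A_{ij}(\eps,\cdot)=\gamma(\eps,\cdot)^{-1}\big(\delta_{ij}+\sum_{n=1}^4\eps^n h_{ijn}\big)$, see \eqref{equ:A eps comp}, combined with the explicit cubic expansion \eqref{equ:tt 1} of $\gamma$ and the uniform lower bound \eqref{equ:vbn 4}, so both limits follow by inspecting a finite expansion, \eqref{equ:st 3}--\eqref{equ:apr 4}. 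For the $L^2$ claim \eqref{equ:st 2} the paper invokes $\gamma_n,h_{ijn}\in L^\infty(\R^3)\cap L^2(\R^3)$, a consequence of $V\in\big(H^1(\R^3)\big)^3$ from Lemma~\ref{lem:V prop}, whereas you pass from the uniform $L^\infty$ bound on the remainder to $L^2$ via its support in $\overline{B_R}$; these devices are essentially equivalent (the paper's $L^2$ membership likewise stems from the support condition), so your version is marginally more self-contained. What your route buys in addition is the closed-form first-order coefficient $A'(0,\cdot)=(\divv V)\,I-J_V-J_V^{\top}$, which the paper leaves implicit as $h_{ij1}-\delta_{ij}\gamma_1$ in \eqref{equ:st 6} (the two expressions agree, as a short computation with the adjugate confirms); what the paper's route buys is that no smallness threshold $\eps_1$ for series convergence is ever needed beyond the lower bound \eqref{equ:vbn 4} on $\gamma$. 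One point you rightly flag and should keep: since $\tilde\vecn{}^0$ is only Lipschitz, $J_V$ is defined merely almost everywhere, which is harmless here because all estimates are in $L^\infty$ and $L^2$.
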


\begin{proof}
Denoting $V(\vecx):= 
(V_1(\vecx), V_2(\vecx), V_3(\vecx))^\top$, the Jacobian matrix and the Jacobian 
of
$T^\eps$ are given by
\begin{equation}\label{equ:Jc T}
J_{T^\eps}(\vecx)
=
\begin{bmatrix}
1 + \eps\dfrac{\partial V_1(\vecx)}{\partial x_1}
&
\eps\dfrac{\partial V_1(\vecx)}{\partial x_2}
&
\eps\dfrac{\partial V_1(\vecx)}{\partial x_3}
\\
\eps\dfrac{\partial V_2(\vecx)}{\partial x_1}
&
1+\eps\dfrac{\partial V_2(\vecx)}{\partial x_2}
&
\eps\dfrac{\partial V_2(\vecx)}{\partial x_3}
\\
\eps\dfrac{\partial V_3(\vecx)}{\partial x_1}
&
\eps\dfrac{\partial V_3(\vecx)}{\partial x_2}
&
1+\eps\dfrac{\partial V_3(\vecx)}{\partial x_3}
\end{bmatrix}
\end{equation}
and 
\begin{align}
\gamma(\eps,\vecx)
&
=
{\Big{|}}
1 + \eps
\Big(
\sum_{k = 1}^3
\dfrac{\partial V_k(\vecx)}{\partial x_k}
\Big)
+
\eps^2
\Big(
\sum_{k,l = 1\atop k\not= l}^3
\dfrac{\partial V_k(\vecx)}{\partial x_k}
\dfrac{\partial V_l(\vecx)}{\partial x_l}
-
\dfrac{\partial V_l(\vecx)}{\partial x_k}
\dfrac{\partial V_k(\vecx)}{\partial x_l}
\Big)
\notag
\\
&
\quad +
\eps^3
\Big(
\sum_{i,j,k = 1}^3
{\rm sign}{(i,j,k)}
\dfrac{\partial V_i(\vecx)}{\partial x_1}
\dfrac{\partial V_j(\vecx)}{\partial x_2}
\dfrac{\partial V_k(\vecx)}{\partial x_3}
\Big)
{\Big{|}}
\notag
\\
&
=:
{\big{|}}
1 + \eps\gamma_1(\vecx) + \eps^2\gamma_2(\vecx) + \eps^3\gamma_3(\vecx)
{\big{|}}.
\label{equ:tt 1}
\end{align}
{Here ${\rm sign}(i,j,k)$ denotes the sign of the permutation $(i,j,k)$.}
The entries $A_{ij}(\eps,\vecx)$, $i,j = 1,2,3$, 
of the matrix $A(\eps,\vecx)$
are given by
\begin{equation}\label{equ:A eps comp}
A_{ij}(\eps,\vecx)
=
\gamma(\eps,\vecx)^{-1}
\left(
\delta_{ij}
+
\sum_{n=1}^4 \eps^n h_{ijn}(\vecx)
\right),
\end{equation}
where $h_{ijn}$ is a polynomial of 
partial derivatives of $V$ and 
$\delta_{ij}$ is the Kronecker delta. 
Using Lemma~\ref{lem:V prop}, we deduce 
\begin{equation}\label{equ:st 4}
\begin{gathered}
\gamma_n,\ h_{ijn}\in L^\infty(\R^3)\cap L^2(\R^3),
\quad
i,j =1,2,3
\
\text{and}
\
n=1,\ldots,4,
\\
\lim_{\eps\goto 0}
\norm{\gamma(\eps,\cdot)}{L^\infty(\R^3)}
>0,
\end{gathered}
\end{equation}
where $\gamma_1$, $\gamma_2$, $\gamma_3$ are defined by~\eqref{equ:tt 1} and 
$\gamma_4:= 0$ for notational convenience later.
In particular, for sufficiently small $\eps>0$, there holds
\begin{equation}\label{equ:vbn 4}
\gamma(\eps,\vecx)
=
1 + \eps\gamma_1(\vecx) + \eps^2\gamma_2(\vecx) + \eps^3\gamma_3(\vecx) 
\geq c > 0
\qquad
\forall \vecx\in \R^3.
\end{equation}
Consider from now on sufficiently small $\eps>0$.
It follows from~\eqref{equ:A eps comp} and~\eqref{equ:vbn 4} that
the $ij$-entry of the matrix $A(\eps,\vecx) - I$ is 
\begin{align}
&
A_{ij}(\eps,\cdot) 
-
{\delta_{ij}}
=
\eps\, \gamma(\eps,\cdot)^{-1} 
\sum_{n=1}^4 \eps^{n-1}\big(h_{ijn} - \delta_{ij}\gamma_n\big).
\label{equ:st 3}
\end{align}
Hence, \eqref{equ:st 4} yields
\[
\norm{A_{ij}(\eps,\cdot)-{\delta_{ij}}}{L^\infty(\R^3)}
\goto 0
\quad
\text{as}
\quad 
\eps\goto 0,
\]
proving~\eqref{equ:st 1}.

From~\eqref{equ:st 3}, we  have 
\begin{align}
&
\frac{A_{ij}(\eps,\cdot) 
-
{\delta_{ij}}}{\eps}
=
\gamma(\eps,\cdot)^{-1} 
\sum_{n=1}^4 \eps^{n-1}\big(h_{ijn} - \delta_{ij}\gamma_n\big).
\label{equ:st 5}
\end{align}
Taking the limit when $\eps$ goes to $0$,
noting that $\gamma(\eps,\cdot)\goto 1$,
we obtain 
\begin{equation}\label{equ:st 6}
A_{ij}'(0,\cdot)
=
h_{ij1}-\delta_{ij}\gamma_1,
\quad
i,j=1,2,3.
\end{equation}
Subtracting~\eqref{equ:st 6} from~\eqref{equ:st 5} side by side,
 we obtain
\begin{align}
\frac{A_{ij}(\eps,\cdot) 
-
{\delta_{ij}}}{\eps}
-
A_{ij}'(0,\cdot) 
&
=
\gamma(\eps,\cdot)^{-1}
\Big(
\sum_{n=2}^4
\eps^{n-1}
(h_{ijn}-\delta_{ij}\gamma_n)
-
(h_{ij1}-\delta_{ij}\gamma_1)
(\gamma(\epsilon, \cdot)-1)
\Big).
\label{equ:apr 4}
\end{align}
Noting~\eqref{equ:st 4}, we infer
\[
\lim_{\eps\goto 0}
\norm{\frac{A_{ij}(\eps,\cdot) 
-
{\delta_{ij}}}{\eps}
-
A_{ij}'(0,\cdot) 
}{L^2(\R^3)}
=
0,
\]
proving~\eqref{equ:st 2}. 
\end{proof}

\begin{lemma}\label{lem:f T eps}
For any function $v\in L^2(\R^3)$, there holds
\begin{equation*}
\lim_{\eps\goto 0} 
\norm{\sqrt{1 + \snorm{\,\cdot\,}{×}^2}
\Big(
\ga(\eps,\cdot)\, v\circ T^\eps - v\Big)}{L^2(\R^3)} 
= 
0.
\end{equation*}
\end{lemma}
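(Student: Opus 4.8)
The plan is to first exploit the fact that the integrand is compactly supported, so that the growing weight $\sqrt{1+|\vecx|^2}$ plays no role. Indeed, by Lemma~\ref{lem:V prop} all first derivatives of $V$ vanish on $B_R^c$, hence $\gamma_1 = \gamma_2 = \gamma_3 = 0$ there and, by \eqref{equ:tt 1}, $\gamma(\eps,\vecx) = 1$ for every $\vecx\in B_R^c$; moreover $T^\eps(\vecx) = \vecx$ on $B_R^c$ by \eqref{equ:T e cond}. Consequently $\gamma(\eps,\vecx)\,v(T^\eps(\vecx)) - v(\vecx) = 0$ for $\vecx\in B_R^c$, so the function whose weighted norm must be estimated is supported in $\overline{B_R}$. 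On $\overline{B_R}$ we have $\sqrt{1+|\vecx|^2}\le \sqrt{1+R^2}$, whence
\begin{equation*}
\norm{\sqrt{1+|\cdot|^2}\big(\gamma(\eps,\cdot)\,v\circ T^\eps - v\big)}{L^2(\R^3)}
\le
\sqrt{1+R^2}\,\norm{\gamma(\eps,\cdot)\,v\circ T^\eps - v}{L^2(\R^3)},
\end{equation*}
and it suffices to prove that the right-hand side tends to $0$.

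Next I would split
\begin{equation*}
\gamma(\eps,\cdot)\,v\circ T^\eps - v
=
\big(\gamma(\eps,\cdot)-1\big)\,v
+
\gamma(\eps,\cdot)\,\big(v\circ T^\eps - v\big).
\end{equation*}
For the first term, the representation $\gamma(\eps,\cdot) = 1 + \eps\gamma_1 + \eps^2\gamma_2 + \eps^3\gamma_3$ together with $\gamma_n\in L^\infty(\R^3)$ from \eqref{equ:st 4} gives $\norm{\gamma(\eps,\cdot)-1}{L^\infty(\R^3)}\to 0$ as $\eps\to 0$, so that $\norm{(\gamma(\eps,\cdot)-1)v}{L^2(\R^3)}\le \norm{\gamma(\eps,\cdot)-1}{L^\infty(\R^3)}\norm{v}{L^2(\R^3)}\to 0$. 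For the second term, \eqref{equ:vbn 4} and the same representation show that $\gamma(\eps,\cdot)$ is bounded in $L^\infty(\R^3)$ uniformly for small $\eps$, so it remains to establish the continuity of composition $\norm{v\circ T^\eps - v}{L^2(\R^3)}\to 0$.

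The latter I would prove by a standard density argument. Since $C_c(\R^3)$ is dense in $L^2(\R^3)$, fix $\phi\in C_c(\R^3)$ and write
\begin{equation*}
\norm{v\circ T^\eps - v}{L^2(\R^3)}
\le
\norm{(v-\phi)\circ T^\eps}{L^2(\R^3)}
+
\norm{\phi\circ T^\eps - \phi}{L^2(\R^3)}
+
\norm{\phi - v}{L^2(\R^3)}.
\end{equation*}
The first term is controlled by the change of variables $\vecy = T^\eps(\vecx)$, $d\vecy = \gamma(\eps,\vecx)\,d\vecx$: using the uniform lower bound $\gamma(\eps,\cdot)\ge c>0$ from \eqref{equ:vbn 4} one gets $\norm{(v-\phi)\circ T^\eps}{L^2(\R^3)}^2 \le c^{-1}\norm{v-\phi}{L^2(\R^3)}^2$. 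For the middle term, $T^\eps\to\mathrm{id}$ uniformly (because $\norm{T^\eps-\mathrm{id}}{L^\infty(\R^3)}=\eps\norm{V}{L^\infty(\R^3)}$) and $\phi$ is uniformly continuous with compact support, so $\phi\circ T^\eps\to\phi$ uniformly on a fixed compact set, whence $\norm{\phi\circ T^\eps-\phi}{L^2(\R^3)}\to 0$. Thus, given any tolerance, I would first choose $\phi$ with $\norm{v-\phi}{L^2(\R^3)}$ small enough to make the first and third terms small \emph{uniformly in} $\eps$, and then let $\eps\to 0$ to kill the middle term. The main obstacle is precisely this last point: the bound on the approximation term $\norm{(v-\phi)\circ T^\eps}{L^2(\R^3)}$ must hold uniformly in $\eps$, which is why the uniform two-sided control of the Jacobian $\gamma(\eps,\cdot)$ (the lower bound \eqref{equ:vbn 4} and the fact that $T^\eps$ is a diffeomorphism for small $\eps$) is indispensable.
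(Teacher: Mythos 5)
Your proof is correct and follows essentially the same route as the paper's: localization to $\overline{B_R}$ via \eqref{equ:T e cond} to neutralize the weight, a triangle-inequality split into a Jacobian-deviation term and a composition-continuity term, and a density argument for $\norm{v\circ T^\eps - v}{L^2}\goto 0$ using the change of variables with the lower bound \eqref{equ:vbn 4}. The only (harmless) difference is cosmetic — you split as $(\gamma(\eps,\cdot)-1)v + \gamma(\eps,\cdot)(v\circ T^\eps - v)$ where the paper uses $(\gamma(\eps,\cdot)-1)(v\circ T^\eps) + (v\circ T^\eps - v)$ — and you spell out the uniform-in-$\eps$ control in the density step that the paper leaves implicit.
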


\begin{proof}
Since $T^\eps(\vecx) = \vecx$ for any $\vecx\in  {B_{R}^c}$, 
see~\eqref{equ:T e cond}, 
the 
Jacobian 
satisfies
\begin{equation}\label{equ:gamma eq 1}
\gamma(\eps,\vecx) = 1
\quad
\text{for any}
\quad
\vecx\in {B_{R}^c}. 
\end{equation}
Therefore,
\begin{align}
\Big{\|}
\sqrt{1 + \snorm{\,\cdot\,}{×}^2}\big(\ga(\eps,\cdot)
-1\big)(v\circ T^\eps)
\Big{\|}_{L^2(\R^3)}
&
=
\norm{\sqrt{1 + \snorm{\,\cdot\,}{×}^2}
\big(\ga(\eps,\cdot)-1\big)(v\circ T^\eps)}{L^2({B_R})}
\notag
 \\
 &
\le
\sqrt{1 + {R^2}}
\norm{\gamma(\eps,\cdot) -1}{L^\infty(\R^3)}\,
\norm{v\circ T^\eps}{L^2(\R^3)}
\notag
\\
&
\leq
C\eps\,
\norm{v\circ T^\eps}{L^2(\R^3)}.
\notag
\end{align}
Using the change of variables $\vecy = T^\eps(\vecx)$ and 
noting~\eqref{equ:st 4}, we have 
\begin{align}
\norm{v\circ T^\eps}{L^2(\R^3)}^2
&
=
\int_{\R^3}
\snorm{v(\vecy)}{×}^2
\big(\gamma(\eps, (T^\eps)^{-1}(\vecy))\big)^{-1}
\,d\vecy
\le 
C
\norm{v}{L^2(\R^3)}.
\notag
\end{align}
Therefore,
\begin{equation}\label{equ:st 9}
\lim_{\eps\goto 0}
\Big{\|}
\sqrt{1 + \snorm{\,\cdot\,}{×}^2}\big(\ga(\eps,\vecx)
-1\big)(v\circ T^\eps)
\Big{\|}_{L^2(\R^3)}
=
0.
\end{equation}

Furthermore, \eqref{equ:T e cond} also gives 
\begin{align}
\norm{\sqrt{1 + \snorm{\,\cdot\,}{×}^2}\big(v\circ T^\eps
- v\big)}{L^2(\R^3)}
&
=
\norm{\sqrt{1 + \snorm{\,\cdot\,}{×}^2}\big(v\circ T^\eps 
- v\big)}{L^2({B_R})}
\le\sqrt{1+{R^2}}\,
\norm{v\circ T^\eps - v}{L^2({B_R})}.
\notag
\end{align}
Note that $\lim_{\eps\goto 0} \norm{v\circ T^\eps - v}{L^2({B_R})}
=0$ if $v$ is continuous. By using a density argument we deduce that 
$\lim_{\eps\goto 0} \norm{v\circ T^\eps - v}{L^2({B_R})} = 0$ for 
$v\in L^2({B_R})$.
Hence, 
\[
\lim_{\eps\goto 0}
\norm{\sqrt{1 + \snorm{\,\cdot\,}{×}^2}\big(v\circ T^\eps
- v\big)}{L^2(\R^3)}
=
0. 
\]
The above identity and~\eqref{equ:st 9} together with the 
{triangle} inequality give the required result.
\end{proof}

\begin{lemma}\label{lem:T eps f 2}
For any function $v\in
H^1(\R^3)$, there holds
\begin{equation*}
\lim_{\eps\goto 0}
\norm{\sqrt{1 + \snorm{\,\cdot\,}{×}^2}
\left(
\frac{\ga(\eps,\cdot) (v\circ T^\eps) - v}{\eps}
-
\divv\big{(} vV \big{)}
\right)
}{L^2(\R^3)}
=
0.
\end{equation*}

\end{lemma}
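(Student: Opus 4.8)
The plan is to exploit that the entire integrand is compactly supported, then split it into a \emph{transport} part and a \emph{Jacobian} part, and to treat the transport part---which is the genuine difficulty---by a uniform-bound-plus-density argument.

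First I would reduce to a bounded domain. By~\eqref{equ:T e cond} we have $T^\eps(\vecx)=\vecx$, and hence $\ga(\eps,\vecx)=1$, for $\vecx\in B_R^c$, while $V=\veczero$ there by Lemma~\ref{lem:V prop}. Consequently, for $\vecx\in B_R^c$ both $\ga(\eps,\cdot)(v\circ T^\eps)-v$ and $\divv(vV)$ vanish, so the integrand is supported in $\overline{B_R}$. On this set the weight satisfies $\sqrt{1+\abs{\vecx}^2}\le\sqrt{1+R^2}$, so it suffices to prove that $\norm{L_\eps v-\divv(vV)}{L^2(B_R)}\goto 0$, where I abbreviate $L_\eps v:=\eps^{-1}(\ga(\eps,\cdot)(v\circ T^\eps)-v)$.

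Next I would split. Writing $\ga(\eps,\cdot)(v\circ T^\eps)-v=(v\circ T^\eps-v)+(\ga(\eps,\cdot)-1)(v\circ T^\eps)$ and recalling from~\eqref{equ:tt 1} that $\gamma_1=\divv V$, so that $\divv(vV)=V\cdot\nabla v+\gamma_1 v$, gives
\[
L_\eps v-\divv(vV)=\Big[\tfrac{v\circ T^\eps-v}{\eps}-V\cdot\nabla v\Big]+\Big[\tfrac{\ga(\eps,\cdot)-1}{\eps}(v\circ T^\eps)-\gamma_1 v\Big].
\]
The second (Jacobian) bracket is routine: using $\eps^{-1}(\ga(\eps,\cdot)-1)=\gamma_1+\eps\gamma_2+\eps^2\gamma_3$ from~\eqref{equ:tt 1}, it equals $\gamma_1(v\circ T^\eps-v)+(\eps\gamma_2+\eps^2\gamma_3)(v\circ T^\eps)$. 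Since $\gamma_1,\gamma_2,\gamma_3\in L^\infty(\R^3)$ by~\eqref{equ:st 4}, $\norm{v\circ T^\eps}{L^2(\R^3)}\le C\norm{v}{L^2(\R^3)}$ by the change of variables used in Lemma~\ref{lem:f T eps}, and $\norm{v\circ T^\eps-v}{L^2(B_R)}\goto 0$ by the same continuity-of-composition argument, both summands tend to $0$ in $L^2$.

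The hard part is the first (transport) bracket $M_\eps v:=\eps^{-1}(v\circ T^\eps-v)-V\cdot\nabla v$, because for merely $H^1$ data one cannot differentiate $v$ pointwise. I would handle it in three steps. For $v\in C_c^\infty(\R^3)$ the fundamental theorem of calculus gives $M_\eps v(\vecx)=\int_0^1 V(\vecx)\cdot\big(\nabla v(\vecx+t\eps V(\vecx))-\nabla v(\vecx)\big)\,dt$; since $\nabla v$ is uniformly continuous and $\abs{t\eps V(\vecx)}\le\eps\norm{V}{L^\infty(\R^3)}\goto0$, the integrand tends to $0$ uniformly on the fixed compact support, whence $\norm{M_\eps v}{L^2(\R^3)}\goto0$. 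Second, I would establish the uniform bound $\norm{M_\eps v}{L^2(\R^3)}\le C\norm{v}{H^1(\R^3)}$ with $C$ independent of small $\eps$: for smooth $v$ this follows from the same representation together with the uniform Jacobian estimates~\eqref{equ:vbn 4} and~\eqref{equ:st 4} under the change of variables $\vecy=T^{t\eps}(\vecx)$, and it extends to all $v\in H^1(\R^3)$ by approximation, using that $v_n\circ T^\eps\goto v\circ T^\eps$ in $L^2$ whenever $v_n\goto v$ in $H^1(\R^3)$. Finally, a density argument closes the proof: given $\delta>0$, choose $\phi\in C_c^\infty(\R^3)$ with $\norm{v-\phi}{H^1(\R^3)}<\delta$, so that $\norm{M_\eps v}{L^2(\R^3)}\le\norm{M_\eps(v-\phi)}{L^2(\R^3)}+\norm{M_\eps\phi}{L^2(\R^3)}\le C\delta+\norm{M_\eps\phi}{L^2(\R^3)}$, giving $\limsup_{\eps\goto0}\norm{M_\eps v}{L^2(\R^3)}\le C\delta$, and letting $\delta\goto0$ yields $\norm{M_\eps v}{L^2(\R^3)}\goto0$. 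Combining the two brackets proves the assertion; the whole difficulty is concentrated in the uniform-bound-plus-density step for the transport term.
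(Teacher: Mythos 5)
Your proposal is correct and follows essentially the same route as the paper's proof: reduction to $B_R$ via \eqref{equ:T e cond} and \eqref{equ:gamma eq 1}, the identical splitting into the Jacobian bracket (using $\gamma_1=\divv V$ and $\eps^{-1}(\ga(\eps,\cdot)-1)=\gamma_1+\eps\gamma_2+\eps^2\gamma_3$) and the transport bracket, each handled by density. The only difference is that where the paper dispatches the transport term with a bare appeal to ``a density argument, noting that $V=\partial T^\eps/\partial\eps$ at $\eps=0$,'' you make that step rigorous by proving the $\eps$-uniform bound $\norm{M_\eps v}{L^2(\R^3)}\le C\norm{v}{H^1(\R^3)}$ via the change of variables $\vecy=T^{t\eps}(\vecx)$ and \eqref{equ:vbn 4} --- precisely the ingredient without which the density argument would not close, since $v\mapsto\eps^{-1}(v\circ T^\eps-v)$ is not uniformly bounded on $L^2$ alone.
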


\begin{proof}
Noting~\eqref{equ:T e cond}, Lemma~\ref{lem:V prop}, 
\eqref{equ:gamma eq 1} and the {triangle}
inequality, we obtain
\begin{align}
\Big{\|}
\sqrt{1 + \snorm{\,\cdot\,}{×}^2}
&
\Big(
\frac{\ga(\eps,\cdot) (v\circ T^\eps)
- v}{\eps}
-
\divv (vV) \Big{)}
\Big{\|}_{L^2(\R^3)}
\notag
\\
&
=
\norm{
\sqrt{1 + \snorm{\,\cdot\,}{×}^2}
\Big(
\frac{\ga(\eps,\cdot) (v\circ T^\eps) - v}{\eps}
-
\divv\big{(} vV \big{)}
\Big)
}{L^2({B_R})}
\notag
\\
&
\lesssim
\norm{
\frac{\ga(\eps,\cdot) (v\circ T^\eps) - v}{\eps}
-
\divv\big{(} vV \big{)}
}{L^2({B_R})}
\notag
\\
&
\le
\norm{\frac{\gamma(\eps,\cdot)-1}{\eps}(v\circ T^\eps) 
-
v\divv V}{L^2({B_R})}
+
\norm{\frac{v\circ T^\eps - v}{\eps} 
-
V\cdot \nabla  v
}{L^2({B_R})}.
\label{equ:st 14}
\end{align}
Recall from~\eqref{equ:tt 1} that $\gamma_1=\divv V$. 
It follows from~\eqref{equ:vbn 4} that
\begin{align}
\frac{\gamma(\eps,\cdot) - 1}{\eps}(v\circ T^\eps)
-
v \divv V
&
=
\gamma_1
(v\circ T^\eps -v)
+
\eps(\gamma_2 + \eps\gamma_3)
(v\circ T^\eps).
\notag
\end{align}
Employing the density argument as in proof of Lemma~\ref{lem:f T eps}, 
we obtain
\begin{equation*}
\lim_{\eps\goto 0}
\norm{\gamma_1(v\circ T^\eps -v)
}{L^2({B_R})}
=
0
\quad 
\text{and}
\quad
\lim_{\eps\goto 0}
\norm{\eps(\gamma_2 + \eps\gamma_3)(v\circ T^\eps)}{L^2({B_R})}
=
0,
\end{equation*}
so that 
\[
\lim_{\eps\goto 0}
\norm{\frac{\gamma(\eps,\cdot)-1}{\eps}(v\circ T^\eps) 
-
v\divv V}{L^2({B_R})}
=
0.
\]
The second term on the right hand side of~\eqref{equ:st 14}
also {tends} to zero by a density argument, 
noting that $V = 
\partial T^\eps/\partial\eps$ at $\eps =0$. {This completes} the proof of the lemma.
\end{proof}

\subsection{Material and shape derivatives}\label{subsec:mat shape der}
In this subsection, for notational convenience we use the notation 
$D^\eps$ for $D_-^\eps$ or $D_+^\eps$, and $\mathcal{H}^1(D^\eps)$
for $H^1(D_-^\eps)$ or $H_w^1(D_+^\eps)$.

\begin{definition}\label{def:Mat-ShapeDeriv}
For any sufficiently small $\eps$, let $v^\eps$ be an element in 
$\cH^1(D^\eps)$ or $H^{1/2}(\G^\eps)$. 
The material derivative of $v^\eps$, denoted by $\dot v$, is defined by
\begin{equation}\label{MaterialDeriv-def}
\dot v
:=
\lim_{\eps\goto 0}
\frac{v^\eps\circ T^\eps - v^0}{\eps},
\end{equation}
if the limit exists in the corresponding space 
$\cH^1(D^0)$ or $H^{1/2}(\G^0)$.
The \textit{shape derivative} of $v^\eps$ is defined \nolinebreak[9] by 
\begin{equation}\label{ShapeDeriv-def}
v'
=
\begin{cases}
\dot v - \nabla v^0\cdot V
& \text{if}\ v^\eps\in \cH^1(D^\eps),
\\
\dot v - \nabla_{\G^0} v^0\cdot V 
& \text{if}\ v^\eps\in H^{1/2}(\G^\eps),
\end{cases}
\end{equation}
where $\nabla_{\G^0}$ denotes the surface gradient.
\end{definition}

\begin{lemma}\label{lem:v shap mat K}
If $v'$ is a shape derivative of $v^\eps\in \cH^1(D^\eps)$, then for 
any compact set $K\subset\subset D^0$ we have 
\begin{equation}\label{ShapeDeriv-limit}
v'
=
\lim_{\eps\goto 0}
\frac{v^\eps-v^0}{\eps}\quad
\text{in}
\quad
H^1(K).
\end{equation}
\end{lemma}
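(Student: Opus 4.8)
The plan is to work throughout on a fixed compact set $K\subset\subset D^0$, on which \emph{both} $v^\eps$ and $v^0$ are simultaneously defined once $\eps$ is small, and to transfer the material-derivative expansion from $D^0$ to $K$ via the change of variables $S^\eps:=(T^\eps)^{-1}$. Since $\G^\eps=T^\eps(\G^0)$ lies within distance $\eps\norm{V}{L^\infty(\R^3)}$ of $\G^0$ and $\distt(K,\G^0)>0$, for all sufficiently small $\eps$ the set $K$ stays strictly on the same side of $\G^\eps$ as of $\G^0$, so that $K\subset D^\eps\cap D^0$ and also $S^\eps(K)\subset D^0$. Recalling $T^\eps(\vecx)=\vecx+\eps V(\vecx)$, the inverse satisfies $S^\eps(\vecx)=\vecx-\eps V(\vecx)+O(\eps^2)$, with $J_{S^\eps}\to I$ and $\tfrac1\eps(S^\eps-\mathrm{id})\to -V$ in $L^\infty$. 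The identity $T^\eps\circ S^\eps=\mathrm{id}$ then gives, pointwise on $K$,
\[
v^\eps\big|_K=\big((v^\eps\circ T^\eps)\circ S^\eps\big)\big|_K .
\]

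Next I would insert the material-derivative expansion. By hypothesis $v'$ exists, hence so does $\dot v\in\cH^1(D^0)$, and we may write $v^\eps\circ T^\eps=v^0+\eps\dot v+r^\eps$ with $\norm{r^\eps}{\cH^1(D^0)}=o(\eps)$. Substituting into the identity above yields, on $K$,
\[
\frac{v^\eps-v^0}{\eps}
=\frac{v^0\circ S^\eps-v^0}{\eps}
+\dot v\circ S^\eps
+\frac{r^\eps\circ S^\eps}{\eps},
\]
and the plan is to pass to the limit in $H^1(K)$ in each term separately. The remainder term is harmless: a change of variables with the uniformly bounded Jacobian of $S^\eps$ gives $\norm{r^\eps\circ S^\eps}{H^1(K)}\lesssim\norm{r^\eps}{\cH^1(D^0)}=o(\eps)$, so it tends to $0$. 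For the middle term, since $\dot v$ is a \emph{fixed} element of $\cH^1(D^0)$ and $S^\eps\to\mathrm{id}$ with $J_{S^\eps}\to I$ in $L^\infty$, a density argument (approximating $\dot v$ by smooth functions on a neighbourhood of $K$ and using $L^2$-continuity of composition, exactly as in the proof of Lemma~\ref{lem:f T eps}) shows $\dot v\circ S^\eps\to\dot v$ in $H^1(K)$.

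The decisive term is the first, and this is where the main obstacle lies: I must establish
\[
\frac{v^0\circ S^\eps-v^0}{\eps}\longrightarrow -\nabla v^0\cdot V\qquad\text{in }H^1(K).
\]
Convergence of the functions themselves in $L^2(K)$ follows from the fundamental theorem of calculus together with $\tfrac1\eps(S^\eps-\mathrm{id})\to -V$, using only $v^0\in H^1$. The difficulty is the convergence of the \emph{gradients}: writing $\nabla(v^0\circ S^\eps)=J_{S^\eps}^\top\,(\nabla v^0)\circ S^\eps$ leads, after subtracting $\nabla v^0$ and dividing by $\eps$, to a difference quotient of $\nabla v^0$, which is uncontrolled for a mere $H^1$ function. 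This term converges in $L^2(K)$ precisely when $\nabla v^0\in H^1$ locally, i.e.\ when $v^0\in H^2_{\mathrm{loc}}(D^0)$; note this is also exactly the regularity needed for the target $\nabla v^0\cdot V$ to belong to $H^1(K)$ in the first place.

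This regularity is available in the situation of interest, and I would invoke it at this point: away from the interface $v^0$ solves the constant-coefficient equation $\alpha_\pm\,\Delta v^0=-f$ with $f\in H^1(\R^3)$, so interior elliptic regularity gives $v^0\in H^2_{\mathrm{loc}}(D^0_\pm)$, and since $K\subset\subset D^0$ this holds on a neighbourhood of $K$. Granting it, one splits
\[
\frac{J_{S^\eps}^\top(\nabla v^0)\circ S^\eps-\nabla v^0}{\eps}
=\frac{J_{S^\eps}^\top-I}{\eps}\,(\nabla v^0)\circ S^\eps
+\frac{(\nabla v^0)\circ S^\eps-\nabla v^0}{\eps},
\]
where the first summand converges to $-J_V^\top\nabla v^0$ in $L^2(K)$ and the second, being a genuine difference quotient of the $H^1$-function $\nabla v^0$, converges to the corresponding directional derivative; by the product rule their sum is exactly $-\nabla(\nabla v^0\cdot V)$, giving the claimed $H^1(K)$-limit $-\nabla v^0\cdot V$. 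Collecting the three limits yields $\dot v-\nabla v^0\cdot V=v'$, which is precisely \eqref{ShapeDeriv-limit}.
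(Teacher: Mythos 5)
Your proof is correct, but it is not the paper's argument. The paper disposes of the lemma in a few lines: setting $\tilde v(\eps,\vecx):=v^\eps(\vecx)$ and $\mathcal T(\eps,\cdot):=T^\eps$, it applies the chain rule to $\eps\mapsto\tilde v(\eps,\mathcal T(\eps,\cdot))$ at $\eps=0$ to get $\dot v=\partial_\eps\tilde v(0,\cdot)+\nabla v^0\cdot V$ in $H^1(K)$, and then reads off $v'=\partial_\eps\tilde v(0,\cdot)=\lim_{\eps\goto 0}(v^\eps-v^0)/\eps$ --- a classical but formal shape-calculus computation which presupposes joint differentiability of $\tilde v$ and does not say why the intermediate limits exist in the stated topology. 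You instead invert the transformation: writing $v^\eps|_K=\big((v^\eps\circ T^\eps)\circ S^\eps\big)|_K$ with $S^\eps=(T^\eps)^{-1}$, inserting the expansion $v^\eps\circ T^\eps=v^0+\eps\dot v+r^\eps$ supplied by the hypothesis, and treating the three terms separately. This is a genuinely different route, and a more honest one: since $\dot v\circ S^\eps\goto\dot v$ and $\eps^{-1}r^\eps\circ S^\eps\goto 0$ in $H^1(K)$ unconditionally, your decomposition exposes that the lemma is in substance equivalent to $H^1(K)$-convergence of the difference quotients $(v^0\circ S^\eps-v^0)/\eps$, whose gradient part requires $v^0\in H^2$ near $K$ --- precisely the regularity needed for the limit $v'=\dot v-\nabla v^0\cdot V$ to belong to $H^1(K)$ at all. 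The paper's chain-rule proof silently assumes this; you supply it via interior elliptic regularity, which is legitimate in the only place the lemma is invoked (in the proof of Lemma~\ref{lem:mean cov} it is applied to $v^\eps=u^\eps$, and away from $\G^0$ one has $\al_\pm\triangle u^0=-f$ with $f\in H^1(\R^3)$, hence $u^0\in H^2_{\rm loc}(D^0_\pm)$ on a neighbourhood of $K$). The trade-off: your argument proves a slightly narrower statement than the lemma's literal formulation for arbitrary families $v^\eps\in\cH^1(D^\eps)$ --- though as stated that formulation is itself imprecise, since without such regularity of $v^0$ the claimed $H^1(K)$-limit need not exist --- while the paper's version is shorter and nominally general but formal. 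Your flagged restriction is therefore a feature, not a gap.
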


\begin{proof}
Given $K\subset\subset D^0$, 
there exists an $\eps_0>0$ such that $K\subset\subset D^\eps$ for all 
$0\le \eps\le \eps_0$.
We denote by $\mathcal T: [0,\eps_0]\times\R^3\goto\R^3$ the mapping given by
\[
\mathcal T(\eps,\vecx)
:=
T^\eps(\vecx),
\quad
\forall 
(\eps,\vecx)
\in 
[0,\eps_0]\times\R^3.
\]
We also denote  by $\tilde v(\eps, \vecx) := v^\eps(\vecx)$ 
for any $0\le \eps \le \eps_0$ and 
$\vecx\in D^\eps$.
By the definition of material derivative,
we have
\[
\dot{v}
=
\frac{\partial}{\partial\eps}
\tilde v(\eps, \mathcal{T}(\eps,\cdot))\Big{|}_{\eps=0},
\quad
\text{in}
\quad
H^1(K).
\]
Applying the chain  rule, we obtain
\begin{align}
\dot v
&
=
\frac{\partial\tilde v}{\partial \eps}
(0,\mathcal{T}(0,\cdot)) 
+
\nabla \tilde v(0,\mathcal{T}(0,\cdot))
\cdot
\frac{\partial \mathcal{T}(0,\cdot)}{\partial \eps}
\notag
\\
&
=
\frac{\partial \tilde v(0,\cdot)}{\partial \eps}
+
\nabla v^0
\cdot
V,
\quad
\text{in}
\quad
H^1(K).
\notag
\end{align}
This implies 
\[
v'
=
\frac{\partial \tilde v(0,\cdot)}{\partial \eps}
=
\lim_{\eps\goto 0}
\frac{v^\eps-v^0}{\eps}
\quad
\text{in}
\quad
H^1(K).
\]
\end{proof}

\begin{remark}\label{rem:v sha vx}
The limit in the above lemma does not hold in $\cH^1(D^0)$ since 
in general, $v^\eps$ does not belong to $\cH^1(D^0)$.
\end{remark}

Similar definitions can be introduced for vector functions $\vecv$. 
The following lemmas state some useful properties of material and 
shape derivatives which will be used frequently in the remainder
of the paper.

\begin{lemma}\label{pro:mat sha pro}
Let $ \dot v$, $\dot w$ be material derivatives, and
$v'$, $w'$ be shape derivatives of
 $v^\eps$, $w^\eps$ in $\cH^1(D^\eps)$, $\eps\ge 0$, respectively. 
Then the following statements are true.
\begin{enumerate}[(i)]
\item\label{ite:t1}  
The material and shape derivatives of the product $v^\eps w^\eps$ 
are $
\dot v w^0
+
v^0\dot w$ and 
$
v' w^0
+
v^0 w'$, respectively.
\item\label{ite:t4}
The material and shape derivatives of the quotient $v^\eps/ w^\eps$ 
are $
(\dot v w^0
-
v^0\dot w)/(w^0)^2$ and 
$
(v' w^0
-
v^0 w')/(w^0)^2$, respectively,
provided that all the fractions are well-defined.
\item\label{ite:t3}
If $v^\eps = v$ for all $\eps\ge 0$, then $ \dot v = \nabla v^0\cdot V
= \nabla v\cdot V$
and $v' = 0$.
\item\label{ite:t2}
If 
\[
\mathcal{J}_1(D^\eps) := \displaystyle\int_{D^\eps} v^\eps\,d\vecx,
\
\mathcal{J}_2(D^\eps) := \displaystyle\int_{\G^\eps} v^\eps\,d\sigma,
\
\text{and}
\
dJ_i(D^\eps)|_{\eps=0}
:=
\lim_{\eps\goto 0}
\frac{J_i(D^\eps)-J_i(D^0)}{\eps},
\
i=1,2,
\]
then 
\[
d\mathcal{J}_1(D^\eps)|_{\eps=0}
=
\int_{D^0} v'\,d\vecx
+
\int_{\G^0} v^0
\inpro{V}{{\vecn^0}}
\,d\sigma
\]
and 
\[
d\mathcal{J}_2(D^\eps)|_{\eps=0}
=
\int_{\G^0} v'\,d\sigma
+
\int_{\G^0} 
\left(
\frac{\partial v^0}{\partial n}
+
\divv_{\G^0}(\vecn^0)\,
v^0
\right)
\inpro{V}{{\vecn^0}}
\,d\sigma.
\]
\end{enumerate}
\end{lemma}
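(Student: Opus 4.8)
The plan is to establish the material-derivative identity in each case first and then convert to shape derivatives via the definition \eqref{ShapeDeriv-def}. For (i) I would start from the factorization $(v^\eps w^\eps)\circ T^\eps=(v^\eps\circ T^\eps)(w^\eps\circ T^\eps)$, insert the mixed term $v^0(w^\eps\circ T^\eps)$, and write
\begin{equation*}
\frac{(v^\eps w^\eps)\circ T^\eps - v^0 w^0}{\eps}
=
\frac{v^\eps\circ T^\eps - v^0}{\eps}\,(w^\eps\circ T^\eps)
+
v^0\,\frac{w^\eps\circ T^\eps - w^0}{\eps}.
\end{equation*}
Since $w^\eps\circ T^\eps\to w^0$ while the two difference quotients converge to $\dot v$ and $\dot w$, the limit is $\dot v\,w^0+v^0\dot w$; the shape-derivative formula then follows by inserting this into \eqref{ShapeDeriv-def} and regrouping with $\nabla(v^0w^0)=w^0\nabla v^0+v^0\nabla w^0$. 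Assertion (ii) is entirely analogous: bring $(v^\eps/w^\eps)\circ T^\eps$ over the common denominator $(w^\eps\circ T^\eps)w^0$, insert $\pm v^0w^0$ in the numerator, and pass to the limit using $w^\eps\circ T^\eps\to w^0$. For (iii), with $v^\eps\equiv v$ the material derivative reduces to $\dot v=\lim_{\eps\to0}(v\circ T^\eps-v)/\eps$, which is precisely the quantity shown to equal $V\cdot\nabla v$ inside the proof of Lemma~\ref{lem:T eps f 2} (equivalently, subtract the conclusions of Lemmas~\ref{lem:f T eps}--\ref{lem:T eps f 2} and use $\gamma_1=\divv V$ from \eqref{equ:tt 1}); hence $\dot v=\nabla v^0\cdot V$ and \eqref{ShapeDeriv-def} gives $v'=0$.

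The substantive part is (iv). For $\mathcal J_1$ I would change variables $\vecy=T^\eps(\vecx)$ to pull the integral back to the fixed domain, $\mathcal J_1(D^\eps)=\int_{D^0}(v^\eps\circ T^\eps)\gamma(\eps,\cdot)\,d\vecx$, so that
\begin{equation*}
\frac{\mathcal J_1(D^\eps)-\mathcal J_1(D^0)}{\eps}
=
\int_{D^0}\left(\gamma(\eps,\cdot)\,\frac{v^\eps\circ T^\eps-v^0}{\eps}
+
v^0\,\frac{\gamma(\eps,\cdot)-1}{\eps}\right)d\vecx.
\end{equation*}
Letting $\eps\to0$ and using $\gamma(\eps,\cdot)\to1$ together with $(\gamma(\eps,\cdot)-1)/\eps\to\gamma_1=\divv V$ (see \eqref{equ:tt 1}), the integrand tends to $\dot v+v^0\divv V$; writing $\dot v=v'+\nabla v^0\cdot V$ turns this into $v'+\divv(v^0V)$, and the divergence theorem yields $\int_{D^0}\divv(v^0V)\,d\vecx=\int_{\G^0}v^0\inpro{V}{\vecn^0}\,d\sigma$. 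The compact support of $V$ from Lemma~\ref{lem:V prop} is what legitimizes this on the unbounded component $D_+^0$: the difference quotient is supported in $B_R$ and no contribution arises from infinity.

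For $\mathcal J_2$ the same scheme applies with the surface Jacobian $\omega(\eps,\cdot)=\gamma(\eps,\cdot)\,\|J_{T^\eps}^{-\top}\vecn^0\|$ replacing $\gamma$, and the key computation, which I expect to be the main obstacle, is the tangential-Jacobian identity $\partial_\eps\omega(\eps,\cdot)|_{\eps=0}=\divv_{\G^0}V$. Granting it, the difference quotient tends to $\dot v+v^0\divv_{\G^0}V$ on $\G^0$ (the trace of $\dot v$ being well defined since the material derivative converges in $\cH^1(D^0)$). To reach the stated form I would decompose $V=V_\tau+\inpro{V}{\vecn^0}\vecn^0$ and $\nabla v^0=\nabla_{\G^0}v^0+(\partial v^0/\partial n)\vecn^0$, substitute $\dot v=v'+\nabla v^0\cdot V$, and use the tangential identities $\divv_{\G^0}(\inpro{V}{\vecn^0}\vecn^0)=\divv_{\G^0}(\vecn^0)\inpro{V}{\vecn^0}$ and $\nabla_{\G^0}v^0\cdot V_\tau+v^0\divv_{\G^0}V_\tau=\divv_{\G^0}(v^0V_\tau)$. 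These regroup the integrand as $v'+\divv_{\G^0}(v^0V_\tau)+(\partial v^0/\partial n+\divv_{\G^0}(\vecn^0)\,v^0)\inpro{V}{\vecn^0}$, after which the tangential divergence theorem on the closed surface $\G^0$ annihilates $\int_{\G^0}\divv_{\G^0}(v^0V_\tau)\,d\sigma$, its integrand being the tangential divergence of a tangential field. The derivation of $\partial_\eps\omega|_{\eps=0}=\divv_{\G^0}V$ and the tangential calculus are the only genuinely delicate points; the rest is bookkeeping with limits already supplied by Lemmas~\ref{lem:f T eps}--\ref{lem:T eps f 2}.
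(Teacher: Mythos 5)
Your proposal is correct, but it takes a genuinely different route from the paper: the paper's entire proof is two sentences long, declaring (i)--(iii) ``elementary calculations'' and delegating (iv) to \cite[pages~113, 116]{SokZol92}, whereas you reconstruct the Soko{\l}owski--Zol\'esio derivation of (iv) from scratch inside the paper's own framework. Your pull-back for $\mathcal J_1$ (splitting the difference quotient as $\gamma(\eps,\cdot)\,\eps^{-1}(v^\eps\circ T^\eps-v^0)+v^0\,\eps^{-1}(\gamma(\eps,\cdot)-1)$, then $\gamma_1=\divv V$ from \eqref{equ:tt 1} and the divergence theorem) and your surface computation via the tangential Jacobian $\omega(\eps,\cdot)=\gamma(\eps,\cdot)\,\snorm{J_{T^\eps}^{-\top}\vecn^0}{}$ are the standard shape-calculus arguments, and your key identity $\partial_\eps\omega|_{\eps=0}=\gamma_1-\tfrac12\inpro{(J_V+J_V^{\top})\vecn^0}{\vecn^0}=\divv_{\G^0}V$ is correct --- indeed the derivative of the second factor is already computed in the paper in \eqref{equ:d J 2}, so your argument recycles the paper's own toolkit; likewise your observation that (iii) is contained in the proof of Lemma~\ref{lem:T eps f 2}, where $\norm{\eps^{-1}(v\circ T^\eps-v)-V\cdot\nabla v}{L^2(B_R)}\to0$ is established by density, is accurate. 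What your route buys: the citation in \cite{SokZol92} concerns bounded domains, while your proof stays within the setting of a compactly supported velocity field (Lemma~\ref{lem:V prop}), so the volume identity on the unbounded component $D_+^0$ is actually justified --- precisely the unbounded-domain extension the paper advertises as a contribution; what the paper's route buys is brevity. One caveat, at the same level of informality as the paper itself: in (i)--(ii) passing to the limit in $\eps^{-1}(v^\eps\circ T^\eps-v^0)\,(w^\eps\circ T^\eps)$ needs $w^\eps\circ T^\eps\to w^0$ in a topology compatible with the product (pointwise products of two $H^1$ functions in $\R^3$ need not lie in $H^1$), so strictly one should either assume extra regularity or interpret the product and quotient rules in a weaker norm; and in (iv) the interchange of limit and integral should explicitly invoke the weighted $L^2$-convergence of the difference quotients together with the compact support of $V$, which the Lemmas~\ref{lem:f T eps}--\ref{lem:T eps f 2} you cite do supply.
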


\begin{proof}
Statements~{\eqref{ite:t1}--\eqref{ite:t3}} can be obtained by using 
elementary calculations. Statement~\eqref{ite:t2}
is proved in~\cite[pages~113, 116]{SokZol92}.
\end{proof}

\begin{lemma}\label{lem:n shape}
The material and shape derivatives of the normal field
$\vecn^\eps$ are given by 
\[
\dot{\vecn} 
=
\vecn' = -\nabla_{\G^0}\kappa.
\]
\end{lemma}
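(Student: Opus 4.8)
The plan is to obtain both derivatives from the classical transport rule for the unit normal under the deformation $T^\eps$. Since $T^\eps=\mathrm{id}+\eps V$ with $V=\tilde\kappa\,\tilde{\vecn}{}^0$, its Jacobian is $J_{T^\eps}=I+\eps J_V$, where $J_V$ denotes the Jacobian matrix of $V$. The unit normal to the perturbed interface $\G^\eps$, pulled back to $\G^0$, is given by the standard formula (see \cite{SokZol92})
\[
\vecn^\eps\circ T^\eps=\frac{J_{T^\eps}^{-\top}\vecn^0}{\snorm{J_{T^\eps}^{-\top}\vecn^0}{}} .
\]
This reduces the computation of the material derivative $\dot{\vecn}$ to differentiating the right-hand side at $\eps=0$; Lemma~\ref{lem:T eps prop} and the assumed $C^1$-regularity of $\tilde\kappa$ and $\tilde{\vecn}{}^0$ guarantee that the ensuing limit exists in $H^{1/2}(\G^0)$.

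First I would expand $J_{T^\eps}^{-\top}=I-\eps J_V^\top+O(\eps^2)$ and differentiate the normalized vector $N(\eps):=J_{T^\eps}^{-\top}\vecn^0$. Using $N(0)=\vecn^0$, $\snorm{\vecn^0}{}=1$ and $N'(0)=-J_V^\top\vecn^0$, the quotient rule for $N/\snorm{N}{}$ gives
\[
\dot{\vecn}=-J_V^\top\vecn^0+\inpro{J_V\vecn^0}{\vecn^0}\,\vecn^0 ,
\]
where I used $\inpro{\vecn^0}{J_V^\top\vecn^0}=\inpro{J_V\vecn^0}{\vecn^0}$.

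Next I would insert $V=\tilde\kappa\,\tilde{\vecn}{}^0$ and expand $J_V$ by the product rule, $J_V=\tilde{\vecn}{}^0(\nabla\tilde\kappa)^\top+\tilde\kappa\,J_{\tilde{\vecn}{}^0}$. Restricting to $\G^0$, where $\tilde\kappa=\kappa$ and $\tilde{\vecn}{}^0=\vecn^0$, and decomposing the ambient gradient into tangential and normal parts, $\nabla\tilde\kappa=\nabla_{\G^0}\kappa+\inpro{\nabla\tilde\kappa}{\vecn^0}\vecn^0$, one finds $J_V^\top\vecn^0=\nabla_{\G^0}\kappa+\inpro{\nabla\tilde\kappa}{\vecn^0}\vecn^0$ and $\inpro{J_V\vecn^0}{\vecn^0}=\inpro{\nabla\tilde\kappa}{\vecn^0}$. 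Here I would choose a \emph{unit} extension of the normal, so that differentiating $\snorm{\tilde{\vecn}{}^0}{}^2\equiv1$ yields $J_{\tilde{\vecn}{}^0}^\top\vecn^0=0$ on $\G^0$; this makes every second-fundamental-form term involving $J_{\tilde{\vecn}{}^0}$ drop out. Substituting back, the normal pieces $\inpro{\nabla\tilde\kappa}{\vecn^0}\vecn^0$ cancel and leave
\[
\dot{\vecn}=-\nabla_{\G^0}\kappa ,
\]
the result being manifestly independent of the extensions, as it must be.

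Finally, the shape derivative follows from Definition~\ref{def:Mat-ShapeDeriv}, which gives $\vecn'=\dot{\vecn}-(\nabla_{\G^0}\vecn^0)V$ componentwise. On $\G^0$ the velocity $V=\kappa\vecn^0$ is purely normal, and the surface gradient $\nabla_{\G^0}\vecn^0$ (the Weingarten map) annihilates the normal direction, $(\nabla_{\G^0}\vecn^0)\vecn^0=0$, so the correction term vanishes and $\vecn'=\dot{\vecn}=-\nabla_{\G^0}\kappa$. \textbf{The main obstacle} is not the expansion but the tangential/normal bookkeeping in the middle step: one must check that the curvature contributions from $J_{\tilde{\vecn}{}^0}$ and the normal-derivative terms $\inpro{\nabla\tilde\kappa}{\vecn^0}$ cancel exactly and that the outcome does not depend on the chosen extensions; picking a unit-norm extension of $\vecn^0$ is the cleanest device to force these cancellations.
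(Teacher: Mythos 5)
Your proof is correct and takes essentially the same route as the paper: both use the pull-back formula $\vecn^\eps\circ T^\eps = J_{T^\eps}^{-\top}\vecn^0/\snorm{J_{T^\eps}^{-\top}\vecn^0}{}$, differentiate at $\eps=0$ to get $\dot\vecn = -J_V^{\top}\vecn^0 + \inpro{J_V\vecn^0}{\vecn^0}\vecn^0$, and use that $\dot\vecn=\vecn'$ because the velocity $V=\kappa\vecn^0$ is purely normal on $\G^0$. If anything, your explicit treatment of the extension term $\tilde\kappa J_{\tilde\vecn^0}$ via a unit-norm extension of $\vecn^0$ makes rigorous the paper's terse claim that $J_V^{\top}=\nabla\kappa\,(\vecn^0)^{\top}$, which strictly holds only after the curvature contribution $J_{\tilde\vecn^0}^{\top}\vecn^0$ is shown to vanish on $\G^0$.
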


\begin{proof}
We start by recalling that the material and the shape derivative of surface fields are identical in the case of normal surface perturbation \eqref{equ:V def}. Particularly, from \eqref{equ:V def} and \eqref{ShapeDeriv-def} we find
\[
\dot{\vecn} - \vecn' = \nabla_{\G^0} \vecn^0\cdot \kappa \vecn^0 = 0.
\]
Recall that the unit normal vector field $\vecn^\eps$ of the perturbed interface
$\G^\eps$ is related to that of the reference interface $\G^0$ by 
\begin{equation*}
\vecn^\eps{\circ}T^\eps(\vecx)
=
\frac{J_{T^\eps}^{-\top}(T^\eps(\vecx))\, \vecn^0(\vecx)}
{\snorm{J_{T^\eps}^{-\top}(T^\eps(\vecx))\, \vecn^0(\vecx)}{×}}.
\end{equation*}
Therefore,
\begin{align}
\dot\vecn
=
&
\lim_{\eps\goto 0}
\frac{\vecn^\eps{\circ}T^\eps(\vecx) - \vecn^0(\vecx)}{\eps}
\notag
\\
&
=
\left(
\lim_{\eps\goto 0}
\frac{J_{T^\eps}^{-\top}(T^\eps(\vecx)) - I}{\eps}
-
\lim_{\eps\goto 0}
\frac{
\snorm{
J_{T^\eps}^{-\top}(T^\eps(\vecx))\vecn^0(\vecx)}{} - 1}{\eps}
\right)
\lim_{\eps\goto 0}
\frac{\vecn^0(\vecx)}
{\snorm{J_{T^\eps}^{-\top}(T^\eps(\vecx))\,\vecn^0(\vecx)}{×}}
\notag
\\
&
=
\left(
\frac{d J_{T^\eps}^{-\top}(T^\eps(\vecx))}{d\eps}\Big{|}_{\eps=0}
-
\frac{d \snorm{J_{T^\eps}^{-\top}(T^\eps(\vecx))\vecn^0(\vecx)}{}}{d\eps}\Big{|}_{\eps=0}
\right)
\vecn^0(\vecx),
\label{equ:n dot long}
\end{align}
noting from~\eqref{equ:Jc T} that
\[
\lim_{\eps\goto 0}
J_{T^\eps}^{-\top}
=
\lim_{\eps\goto 0}
J_{T^\eps}
=
I.
\]
Since $I = J_{T^\eps}^{-1}(T^\eps(\vecx))\, J_{T^\eps}(\vecx)$
for all $\vecx\in \R^3$, we have 
$\veczero = \frac{d}{d\eps}\big(J_{T^\eps}^{-1} J_{T^\eps}\big)|_{\eps=0}$,
which together with the product rule and~\eqref{equ:Jc T} yields
\begin{equation}\label{equ:d J 1}
\frac{d}{d\eps}\big(J_{T^\eps}^{-\top}(T^\eps(\vecx))\big)\Big{|}_{\eps=0}
=
-
(J_{T^0})^{-\top}
\Big(
\frac{d}{d\eps}
(J_{T^\eps}^{\top})
\Big{|}_{\eps=0}
\Big)
(J_{T^0})^{-1}
=
-\frac{d}{d\eps}
(J_{T^\eps})
\Big{|}_{\eps=0}
=
-
J_V^{\top},
\end{equation}
We also have, using the fact that $\snorm{J_{T^0}^{-\top}\,\vecn^0}{}=1$,
\begin{align}
\frac{d}{d\eps}
\snorm{J_{T^\eps}^{-\top}\,\vecn^0}{×}\Big{|}_{\eps=0}
&
=
\snorm{J_{T^0}^{-\top}\,\vecn^0}{×}
\frac{d}{d\eps}
\snorm{J_{T^\eps}^{-\top}\,\vecn^0}{×}\Big{|}_{\eps=0}
=
\frac{1}{2}
\frac{d}{d\eps}
\Big(
\snorm{J_{T^\eps}^{-\top}\,\vecn^0}{×}^2
\Big)\Big{|}_{\eps=0}
\notag
\\
&
=
\frac{1}{2}
\inpro{\frac{d}{d\eps}\big(J_{T^\eps}^{-1} J_{T^\eps}^{-\top}\big)\,
\vecn^0}{\vecn^0}
=
-
\frac{1}{2}
\inpro{(J_V^{\top} + J_V)\,
\vecn^0}{\vecn^0}.
\label{equ:d J 2}
\end{align}
Simple calculation reveals that 
\begin{equation}\label{equ:dkappa}
J_V^{\top} = \nabla\kappa\,(\vecn^0)^{\top}
\quad
\text{and}
\quad
(J_V^{\top} + J_V)\vecn^0 = \nabla \kappa +\inpro{\nabla\kappa}{\vecn^0×}
\vecn^0.
\end{equation}
Inserting~\eqref{equ:d J 1}--\eqref{equ:dkappa} 
into~\eqref{equ:n dot long}, we obtain
\begin{align}
\dot\vecn
&
=
-J_V^{\top}\,\vecn^0
+
\frac{1}{2}
\inpro{(J_V^{\top} + J_V)\,
\vecn^0}{\vecn^0}\vecn^0
=
-\nabla\kappa
+
\inpro{\nabla\kappa}{\vecn^0} \vecn^0
=
-\nabla_{\G^0}\kappa,
\notag
\end{align}
finishing the proof of the lemma.
\end{proof}

\subsection{Shape derivative of solutions of transmission problem}
In this subsection, we shall discuss the existence of material and 
shape derivatives of the solutions of transmission problems on 
perturbed interfaces.
Consider a deterministic problem {with respect to}
the reference interface $\G^0$:
\begin{subequations}\label{equ:tt 3}
\begin{align}
-\al \triangle u^0 
&
=
f
\quad
\text{in } D_{-}^0\cup D_{+}^0,
\label{equ:b1}
\\
\jum{u^0}
&
=
0
\quad
\text{on }
\G^0,
\label{equ:b2}
\\
\jum{\al\frac{\partial u^0}{\partial \bsb n}}
&
=
0
\quad
\text{on }
\G^0,
\label{equ:b3}
\\
u^0({\vecx}{×})
&
=
\mathcal O(\snorm{\vecx}{×}^{-1})
\quad
\text{when }
\snorm{\vecx}{×}\goto\infty.
\label{equ:b4}
\end{align}
\end{subequations}
The perturbed problem corresponding to  the perturbed 
interface $\G^{\eps}$ is given by

\begin{subequations}\label{equ:tt 4}
\begin{align}
-\al^\eps \triangle u^\eps 
&
=
f
\quad
\text{in } D_{-}^\eps\cup D_{+}^{\eps},
\label{equ:b5}
\\
\jum{u^\eps}
&
=
0
\quad
\text{on }
\G^{\eps},
\label{equ:b6}
\\
\jum{\al^\eps\frac{\partial u^\eps}{\partial \bsb n}}
&
=
0
\quad
\text{on }
\G^{\eps},
\label{equ:b7}
\\
u^\eps({\vecx}{×})
&
=
\mathcal O(\snorm{\vecx}{×}^{-1})
\quad
\text{when }
\snorm{\vecx}{×}\goto\infty,
\label{equ:b8}
\end{align}
\end{subequations}
where (cf. \eqref{equ:alpha cond})
\[
\al^\eps(\vecx)
=
\begin{cases}
\al_-, \quad & \vecx\in D_-^{\eps} \\
\al_+, \quad & \vecx\in D_+^{\eps}.
\end{cases}
\]

\begin{lemma}\label{lem:u eps}
\ac{Suppose} $f\in L^2(\R^3)\cap {W_0^*}$ \ac{and $\kappa\in C^1(\G^0)$,} 
then 
\begin{equation}\label{equ:vbn 9}
\lim_{\eps\goto0}
\norm{u^\eps\circ T^\eps-u^0}{{W_0}} = 0.
\end{equation}
Here, {$W_0^*$} denotes the dual space of {$W_0$} with respect to the 
$L^2$-inner product.
\end{lemma}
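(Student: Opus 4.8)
The plan is to recast both the reference problem \eqref{equ:tt 3} and the perturbed problem \eqref{equ:tt 4} in weak form on the \emph{fixed} splitting $D_-^0\cup D_+^0$ and compare the two solutions there. First I would write the variational formulations: $u^0\in W_0$ solves $\int_{\R^3}\al\,\nabla u^0\cdot\nabla\varphi\,d\vecx=\int_{\R^3}f\varphi\,d\vecx$ for all $\varphi\in W_0$, and $u^\eps\in W_\eps$ solves the analogous identity with $\al^\eps$ over $W_\eps$; existence and uniqueness of both follow from the Lax--Milgram lemma, since the bilinear form is bounded and, by Lemma~\ref{lem:nor snor}, coercive (as $\int_{\R^3}\al\snorm{\nabla v}{}^2\,d\vecx\ge\min(\al_-,\al_+)\snorm{v}{W_0}^2$). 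Abbreviating $\wtd u^\eps:=u^\eps\circ T^\eps\in W_0$ and using the change of variables $\vecy=T^\eps(\vecx)$ together with the chain rule $(\nabla u^\eps)\circ T^\eps=J_{T^\eps}^{-\top}\nabla\wtd u^\eps$, the identity $\al^\eps\circ T^\eps=\al$ (because $T^\eps$ maps $D_\pm^0$ onto $D_\pm^\eps$), and $d\vecy=\gamma(\eps,\cdot)\,d\vecx$, the perturbed problem transforms into
\begin{equation*}
\int_{\R^3}\al\,\big(A(\eps,\cdot)\nabla\wtd u^\eps\big)\cdot\nabla\wtd\varphi\,d\vecx=\int_{\R^3}(f\circ T^\eps)\,\gamma(\eps,\cdot)\,\wtd\varphi\,d\vecx\qquad\forall\,\wtd\varphi\in W_0,
\end{equation*}
where $A(\eps,\cdot)=\gamma(\eps,\cdot)J_{T^\eps}^{-1}J_{T^\eps}^{-\top}$ is the matrix of Lemma~\ref{lem:T eps prop}; here I use that $\wtd\varphi=\varphi\circ T^\eps$ runs over all of $W_0$ as $\varphi$ runs over $W_\eps$.

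Subtracting the reference identity from the transformed one and testing with $\wtd\varphi=e^\eps:=\wtd u^\eps-u^0\in W_0$, I obtain
\begin{equation*}
\int_{\R^3}\al\,\snorm{\nabla e^\eps}{}^2\,d\vecx=\int_{\R^3}\big((f\circ T^\eps)\gamma(\eps,\cdot)-f\big)e^\eps\,d\vecx+\int_{\R^3}\al\,\big((I-A(\eps,\cdot))\nabla\wtd u^\eps\big)\cdot\nabla e^\eps\,d\vecx.
\end{equation*}
By coercivity and Lemma~\ref{lem:nor snor} the left-hand side dominates $c\,\norm{e^\eps}{W_0}^2$, so it suffices to bound both right-hand terms by $o(1)\,\norm{e^\eps}{W_0}$ as $\eps\to0$. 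For the second term I would use $\norm{(I-A(\eps,\cdot))\nabla\wtd u^\eps}{L^2(\R^3)}\le\norm{A(\eps,\cdot)-I}{L^\infty(\R^3)}\norm{\nabla\wtd u^\eps}{L^2(\R^3)}$, which tends to zero by \eqref{equ:st 1} \emph{provided} $\norm{\wtd u^\eps}{W_0}$ is bounded uniformly in $\eps$.

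Establishing that uniform a priori bound is the crux, and it is exactly where the hypothesis $f\in W_0^*$ enters. Testing the transformed perturbed problem with $\wtd u^\eps$ and using that $A(\eps,\cdot)\to I$ uniformly by \eqref{equ:st 1} (so $A(\eps,\cdot)\ge\tfrac12 I$ for small $\eps$) gives $c_0\norm{\wtd u^\eps}{W_0}^2\le\int_{\R^3}(f\circ T^\eps)\gamma(\eps,\cdot)\wtd u^\eps\,d\vecx$; splitting the right-hand side as $\int_{\R^3}\big((f\circ T^\eps)\gamma-f\big)\wtd u^\eps\,d\vecx+\int_{\R^3}f\wtd u^\eps\,d\vecx$ and bounding the last term by $\norm{f}{W_0^*}\norm{\wtd u^\eps}{W_0}$ yields $\norm{\wtd u^\eps}{W_0}\le C$ uniformly. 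The inner products involving $(f\circ T^\eps)\gamma-f$ — both here and in the first right-hand term above — I would control with the weighted Cauchy--Schwarz estimate
\begin{equation*}
\Big|\int_{\R^3}\big((f\circ T^\eps)\gamma-f\big)v\,d\vecx\Big|\le\norm{\sqrt{1+\snorm{\,\cdot\,}{}^2}\big((f\circ T^\eps)\gamma-f\big)}{L^2(\R^3)}\,\Big\|\tfrac{v}{\sqrt{1+\snorm{\,\cdot\,}{}^2}}\Big\|_{L^2(\R^3)},
\end{equation*}
noting that $\big\|v/\sqrt{1+\snorm{\,\cdot\,}{}^2}\big\|_{L^2(\R^3)}\le C\norm{v}{W_0}$ by the definition of the weighted norm \eqref{equ:W Sobolev}, and that the first factor tends to $0$ by Lemma~\ref{lem:f T eps} applied to $v=f\in L^2(\R^3)$ (the difference is supported in $B_R$, since $T^\eps$ is the identity outside $B_R$). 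Feeding the uniform bound on $\norm{\wtd u^\eps}{W_0}$ back into the error identity shows both right-hand terms are $o(1)\norm{e^\eps}{W_0}$, whence $\norm{e^\eps}{W_0}\to0$, which is \eqref{equ:vbn 9}. The main difficulty throughout is the unboundedness of $D_+^0$: it forces the use of the weighted space $W_0$ and of the dual hypothesis $f\in W_0^*$ to close the a priori estimate, and it is resolved precisely by the weight $\sqrt{1+\snorm{\,\cdot\,}{}^2}$ built into Lemma~\ref{lem:f T eps} and the norm \eqref{equ:W Sobolev}.
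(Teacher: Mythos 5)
Your proposal is correct and follows essentially the same route as the paper's proof: transform the perturbed weak formulation to the nominal configuration via $T^\eps$ (yielding the matrix $A(\eps,\cdot)=\gamma(\eps,\cdot)J_{T^\eps}^{-1}J_{T^\eps}^{-\top}$), subtract the reference identity, test with $u^\eps\circ T^\eps-u^0$, and control the two residual terms by \eqref{equ:st 1} of Lemma~\ref{lem:T eps prop} and by Lemma~\ref{lem:f T eps} with the weighted Cauchy--Schwarz estimate, together with an $\eps$-uniform a priori bound coming from $f\in W_0^*$. The only (harmless) deviation is that you obtain the uniform bound on $\norm{u^\eps\circ T^\eps}{W_0}$ from the transformed problem using $A(\eps,\cdot)\ge\tfrac12 I$ for small $\eps$, whereas the paper tests the untransformed weak form \eqref{equ:tr0} with $u^\eps$ on $W_\eps$ and invokes Lemma~\ref{lem:nor snor} and the norm equivalences \eqref{equ:W Weps}.
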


\begin{proof}
By multiplying both sides of~\eqref{equ:b5} with an arbitrary function
$v\in C_0^\infty(\R^3)$ and integrating over 
{$D_-^\eps \cup D_+^\eps$}, 
we obtain
\begin{align}
\int_{\R^3}
fv\,d\vecx
&
=
-{\al_-}
\int_{D_-^\eps}
\triangle u^\eps(\vecx)\,v(\vecx)\,d\vecx
-{\al_+}
\int_{D_+^\eps}
\triangle u^\eps(\vecx)\,v(\vecx)\,d\vecx.
\label{equ:fv triangle}
\end{align}
Applying Green's identity  and noting~\eqref{equ:b7}, we obtain
\begin{equation}\label{equ:trl1}
\int_{D_+^\eps\cup D_-^\eps}
\al^\eps(\vecx)\,
\nabla  u^\eps(\vecx)\cdot \nabla  v(\vecx)
=
\inpro{f}{v}_{L^2(\R^3)}
\quad
\forall v\in C_0^\infty(\R^3).
\end{equation}
Since the space $C_0^\infty(\R^3)$ is dense in ${W_\eps}$ 
(see~\cite[Remark~2.9.3]{SauSch11}), there holds
\begin{equation}\label{equ:tr0}
\int_{D_+^\eps\cup D_-^\eps}
\al^\eps(\vecx)\,
\nabla  u^\eps(\vecx)\cdot \nabla  v^\eps(\vecx)
=
\inpro{f}{v^\eps}_{L^2(\R^3)}
\quad
\forall v^\eps\in {W_\eps}.
\end{equation}
Choosing $v^\eps = u^\eps$ gives
\[
\snorm{u^\eps}{{W_\eps}}^2
\simeq
\inpro{f}{u^\eps}_{L^2(\R^3)}
\leq
\norm{f}{{W_\eps^*}}
\norm{u^\eps}{{W_\eps}}.
\]
It follows from~Lemma~\ref{lem:nor snor} that
\begin{equation}\label{equ:br 0}
\norm{u^\eps}{{W_\eps}}
\lesssim
\norm{f}{{W_\eps^*}}
\simeq
\norm{f}{{W_0^*}}.
\end{equation}

On the other hand,
using the change of variables $\vecx = T^\eps(\vecy)$ 
in~\eqref{equ:tr0}, we have 
{(noting that $\al^\eps(T^\eps(\vecy))=\al(\vecy)$)}
\begin{equation}\label{equ:tr1}
\int_{D_+^0\cup D_-^0}
\al(\vecy)
\,
(\nabla  w(\vecy))^{\top}
\,A(\eps,\vecy)\,
\nabla  (u^\eps\circ T^\eps)(\vecy)
\,d\vecy
=
\int_{D_+^0\cup D_-^0}
f(T^\eps(\vecy))\, w(\vecy) \ga(\eps,\vecy)
\,d\vecy,
\end{equation}
for any $w\in {W_0}$.
We also obtain from
problem~\eqref{equ:b1}--\eqref{equ:b4}
\begin{equation}\label{equ:tr2}
\int_{D_+^0\cup D_-^0}
\al(\vecy)
\,
(\nabla  w(\vecy))^{\top}\,
\nabla  u^0(\vecy)\,
d\vecy
=
\int_{D_+^0\cup D_-^0}
f(\vecy)\, w(\vecy) \,d\vecy,
\end{equation}
for any $w\in {W_0}$.
Subtracting~\eqref{equ:tr2} from~\eqref{equ:tr1}
we deduce
\begin{align}
\int_{D_+^0\cup D_-^0}
\al(\vecy)
&
\nabla  w(\vecy)^{\top} \,
\nabla  \Big((u^\eps\circ T^\eps)(\vecy)
-
 u^0(\vecy) \Big)
\,d\vecy
\notag
\\
&
=
-\int_{D_+^0\cup D_-^0}
\al(\vecy)
\big(\nabla w(\vecy)\big)^{\top} \,
\Big( A(\eps,\vecy) -I \Big) \,
\nabla  (u^\eps\circ T^\eps)(\vecy)
\,d\vecy
\notag
\\
&
\quad
+
\int_{D_+^0\cup D_-^0}
\Big(
\ga(\eps,\vecy)f(T^\eps(\vecy)) - f(\vecy)
\Big)
w(\vecy)
\,d\vecy
\qquad
\forall w\in {W_0}.
\label{equ:tr3}
\end{align}
Choosing in~\eqref{equ:tr3} 
{$w = u^\eps\circ T^\eps - u^0$}
gives
{
\begin{align}
\int_{D_+^0\cup D_-^0}
&
\al(\vecy)
\left|
\nabla  \Big((u^\eps\circ T^\eps)(\vecy)
-
 u^0(\vecy) \Big)
\right|^2
\,d\vecy
\notag
\\
&
=
-\int_{D_+^0\cup D_-^0}
\al(\vecy)
\left(\nabla  \Big((u^\eps\circ T^\eps)(\vecy)
-
 u^0(\vecy)
\Big)\right)^{\top}
\Big(
A(\eps,\vecy) -I
\Big)
\nabla 
(u^\eps\circ T^\eps)(\vecy)
\,d\vecy
\notag
\\
&
\quad
+
\int_{D_+^0\cup D_-^0}
\sqrt{1 + \snorm{\vecy}{×}^2}
\Big(
\ga(\eps,\vecy)f(T^\eps(\vecy)) - f(\vecy)
\Big)
\frac{(u^\eps\circ T^\eps)(\vecy) - u^0(\vecy)}%
{\sqrt{1 + \snorm{\vecy}{×}^2}}
\,d\vecy
\notag
\\
&
\lesssim
\norm{\big( A(\eps,\cdot) - I \big)}{L^\infty(\R^3)}
\norm{\nabla (u^\eps\circ T^\eps)}{L^2(\R^3)}
\norm{\nabla  \big(u^\eps\circ T^\eps
-
 u^0 \big)}{L^2(\R^3)}
\notag
\\
&
\quad
+
\norm{\sqrt{1 + \snorm{\cdot}{×}^2}\big(\ga(\eps,\cdot)
f\circ T^\eps - f\big)}{L^2(\R^3)}
\,
\bnorm{\frac{u^\eps\circ T^\eps
-
 u^0}{\sqrt{1 + \snorm{\cdot}{×}^2}}}{L^2(\R^3)}
\notag
\end{align}
}
implying 
\begin{align}
\norm{u^\eps\circ T^\eps - u^0}{{W_0}}
&
\lesssim
\norm{ A(\eps,\cdot) - I }{L^\infty(\R^3)}
\norm{\nabla (u^\eps\circ T^\eps)}{L^2(\R^3)}
\notag
\\
&
\quad
+
\norm{\sqrt{1 + \snorm{\cdot}{×}^2}\big(
\ga(\eps,\cdot)f\circ T^\eps - f\big)}{L^2(\R^3)}.
\notag
\end{align}
Hence, applying Lemma~\ref{lem:T eps prop}, 
noting~\eqref{equ:br 0} and Lemma~\ref{lem:f T eps},
we obtain
\[
\lim_{\eps\goto 0}
\norm{u^\eps\circ T^\eps  - u^0}{{W_0}}
=
0,
\]
finishing the proof of this lemma.
\end{proof}

\begin{lemma}\label{lem:z eps}
Assume that  $f\in H^1(\R^3)\cap {W_0^*}$ \ac{and $\kappa\in C^1(\G^0)$}.
Then, $u^\eps$ has a material derivative belonging to
{$W_0$} which is the 
solution to the following equation with unknown $z$:
\begin{align}
\int_{D_+^0\cup D_-^0}
\al(\vecy)
\,
\nabla  z(\vecy)
\cdot
\nabla   w(\vecy)
\,d\vecy
&
=
-\int_{D_+^0\cup D_-^0}
\al(\vecy)
\nabla 
u^0(\vecy)
A'(0,\vecy)
\big(\nabla w(\vecy)\big)^{\top}
\,d\vecy
\notag
\\
&
\quad
+
\int_{D_+^0\cup D_-^0}
\divv\brac{V(\vecy)f}
w(\vecy)
\,d\vecy
\qquad
\forall w\in {W_0}.
\label{equ:ab 1}
\end{align}
\end{lemma}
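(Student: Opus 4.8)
The plan is to read~\eqref{equ:ab 1} as the weak form of a coercive elliptic problem on $W_0$ and to realise the material derivative as the strong $W_0$-limit of the difference quotients $z^\eps:=(u^\eps\circ T^\eps-u^0)/\eps$. First I would note that $z^\eps\in W_0$, since $v\in W_\eps$ iff $v\circ T^\eps\in W_0$ and $u^0\in W_0$. Dividing the identity~\eqref{equ:tr3}, which holds for all $w\in W_0$, by $\eps$ shows that $z^\eps$ solves $a_0(z^\eps,w)=\ell^\eps(w)$ for all $w\in W_0$, where $a_0(v,w):=\int_{D_-^0\cup D_+^0}\al\,\nabla v\cdot\nabla w\,d\vecy$ and
\[
\ell^\eps(w):=-\int_{D_-^0\cup D_+^0}\al\,(\nabla w)^\top\frac{A(\eps,\cdot)-I}{\eps}\,\nabla(u^\eps\circ T^\eps)\,d\vecy+\int_{D_-^0\cup D_+^0}\frac{\ga(\eps,\cdot)\,f\circ T^\eps - f}{\eps}\,w\,d\vecy .
\]
By Lemma~\ref{lem:nor snor} the form $a_0$ is bounded and coercive on $W_0$, so by Lax--Milgram~\eqref{equ:ab 1} is uniquely solvable provided its right-hand side defines an element $\ell\in W_0^*$; this I would check using $A'(0,\cdot)\in L^\infty(\R^3)$ with compact support (it equals $h_{ij1}-\delta_{ij}\ga_1$ by~\eqref{equ:st 6} and~\eqref{equ:st 4}) together with $\divv(Vf)=f\ga_1+V\cdot\nabla f\in L^2(\R^3)$, where $f\in H^1$ is used. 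The same $L^\infty$ bound on $(A(\eps,\cdot)-I)/\eps$, visible from~\eqref{equ:st 5}, \eqref{equ:st 4} and~\eqref{equ:vbn 4}, together with Lemma~\ref{lem:T eps f 2}, shows $\ell^\eps\in W_0^*$, so $z^\eps$ is itself a Lax--Milgram solution.

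Coercivity then gives the stability bound $\|z^\eps-z\|_{W_0}\lesssim\|\ell^\eps-\ell\|_{W_0^*}$, so the whole statement reduces to proving $\|\ell^\eps-\ell\|_{W_0^*}\to 0$. Testing the difference against $w\in W_0$, applying the Cauchy--Schwarz inequality, and using $\|\nabla w\|_{L^2}\le\|w\|_{W_0}$ and $\|w/\sqrt{1+|\cdot|^2}\|_{L^2}\le\|w\|_{W_0}$, I reduce matters to the two convergences
\[
\Big\|\frac{A(\eps,\cdot)-I}{\eps}\nabla(u^\eps\circ T^\eps)-A'(0,\cdot)\nabla u^0\Big\|_{L^2(\R^3)}\to 0
\quad\text{and}\quad
\Big\|\sqrt{1+|\cdot|^2}\Big(\frac{\ga(\eps,\cdot)f\circ T^\eps - f}{\eps}-\divv(Vf)\Big)\Big\|_{L^2(\R^3)}\to 0 .
\]
The second convergence is precisely Lemma~\ref{lem:T eps f 2} and needs nothing further.

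The main obstacle is the first convergence, because it is a product of three factors each controlled only in $L^2$, so neither the Cauchy--Schwarz inequality nor the $L^2$-convergence of Lemma~\ref{lem:T eps prop} closes it directly. I would split it as
\[
\Big(\frac{A(\eps,\cdot)-I}{\eps}-A'(0,\cdot)\Big)\nabla(u^\eps\circ T^\eps)+A'(0,\cdot)\big(\nabla(u^\eps\circ T^\eps)-\nabla u^0\big).
\]
The second summand tends to $0$ in $L^2$ because $A'(0,\cdot)\in L^\infty$ and $\nabla(u^\eps\circ T^\eps)\to\nabla u^0$ in $L^2$ by Lemma~\ref{lem:u eps}. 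For the first summand the bounded factor $\|\nabla(u^\eps\circ T^\eps)\|_{L^2}\le C$ (again Lemma~\ref{lem:u eps}) must be paired with the matrix difference measured in $L^\infty$; the explicit expression~\eqref{equ:apr 4}, together with $\ga_n,h_{ijn}\in L^\infty$ from~\eqref{equ:st 4} and $\ga(\eps,\cdot)-1=O(\eps)$ in $L^\infty$ from~\eqref{equ:vbn 4}, shows that $(A(\eps,\cdot)-I)/\eps-A'(0,\cdot)\to 0$ in $L^\infty(\R^3)$, which is the decisive point beyond the $L^2$-statement of Lemma~\ref{lem:T eps prop}. Collecting the estimates yields $z^\eps\to z$ strongly in $W_0$, so the material derivative $\dot u=\lim_{\eps\to0}z^\eps$ exists in $W_0$ and coincides with the unique solution of~\eqref{equ:ab 1}.
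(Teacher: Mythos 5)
Your proof is correct and follows essentially the same route as the paper: both divide \eqref{equ:tr3} by $\eps$, subtract the limiting equation \eqref{equ:ab 1}, split the resulting term into exactly the same two summands $\big((A(\eps,\cdot)-I)/\eps-A'(0,\cdot)\big)\nabla(u^\eps\circ T^\eps)$ and $A'(0,\cdot)\nabla\big(u^\eps\circ T^\eps-u^0\big)$, invoke Lemma~\ref{lem:u eps} and Lemma~\ref{lem:T eps f 2}, and conclude by coercivity (your stability bound $\|z^\eps-z\|_{W_0}\lesssim\|\ell^\eps-\ell\|_{W_0^*}$ is the paper's choice $w=z^\eps-z$ in functional-analytic disguise). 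One small merit of your write-up: you explicitly derive from \eqref{equ:apr 4}, \eqref{equ:st 4} and \eqref{equ:vbn 4} the $L^\infty(\R^3)$ convergence of $(A(\eps,\cdot)-I)/\eps-A'(0,\cdot)$, which is the norm the paper actually uses in \eqref{equ:bv 4} and \eqref{equ:cv 2} while citing Lemma~\ref{lem:T eps prop}, whose stated conclusion \eqref{equ:st 2} is only in $L^2$ --- so you make explicit (and justify) a step the paper leaves implicit.
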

\begin{proof}
The {uniqueness and} existence of the solution $z\in {W_0}$ 
to the above equation is confirmed by 
\cite[Theorem 2.10.14]{SauSch11}.
Let $z^\eps := (u^\eps\circ T^\eps - u^0)/\eps$.
Our task is to prove that $\displaystyle \lim_{\eps\goto 0}
\norm{z^\eps-z}{{W_0}}= 0$.
Dividing~\eqref{equ:tr3} by $\eps$ we obtain
\begin{align}
\int_{D_+^0\cup D_-^0}
\al(\vecy)
\,
\nabla  z^\eps(\vecy)
\cdot
\nabla   w(\vecy)
\,d\vecy
&
=
-\int_{D_+^0\cup D_-^0}
\al(\vecy)
\nabla 
(u^\eps\circ T^\eps)(\vecy)
\frac{
A(\eps,\vecy) -I
}{\eps}
\big(\nabla w(\vecy)\big)^{\top}
\,d\vecy
\notag
\\
&
\quad
+
\int_{D_+^0\cup D_-^0}
\frac{
\ga(\eps,\vecy)f(T^\eps(\vecy)) - f(\vecy)
}{\eps}
w(\vecy)
\,d\vecy
\qquad
\forall {w\in {W_0}}.
\label{equ:tr4}
\end{align}
Subtracting~\eqref{equ:ab 1} from \eqref{equ:tr4}
yields
\begin{align}
\int_{D_+^0\cup D_-^0}
&
\al(\vecy)
\,
\nabla \brac{z^\eps(\vecy)-z(\vecy)}
\cdot
\nabla   w(\vecy)
\,d\vecy
\notag
\\
&
=
- \int_{D_+^0\cup D_-^0}
\al(\vecy)
\left(\nabla  (u^\eps\circ T^\eps)(\vecy)\,\frac{A(\eps,\vecy)-I}{\eps} 
-\nabla  u^0(\vecy) A'(0,\vecy)\right)
\cdot 
\nabla  w(\vecy)\,d\vecy
\notag
\\
&
\quad
+
\int_{D_+^0\cup D_-^0}
\left( 
\frac{\ga(\eps,\vecy) f(T^\eps(\vecy)) - f(\vecy)}{\eps}
-
\divv\Big{(} V(\vecy)f(\vecy) \Big{)}
\right)
w(\vecy)\,d\vecy
\notag \\
&
=: I_1(w) + I_2(w).
\label{equ:bv 1}
\end{align}
The first integral in the right hand side of~\eqref{equ:bv 1}
can be written as
\begin{align}
I_1(w)
&
=
\int_{D_+^0\cup D_-^0}
\al(\vecy)\,
\nabla  (u^\eps\circ T^\eps)(\vecy)
\left(\frac{A(\eps,\vecy)-I}{\eps} 
- A'(0,\vecy)\right)
\cdot 
\nabla  w(\vecy)\,d\vecy
\notag
\\
&
+
\int_{D_+^0\cup D_-^0}
\al(\vecy)\,
\nabla \big((u^\eps\circ T^\eps)(\vecy) - u^0(\vecy)\big)
A'(0,\vecy)
\cdot
\nabla  w(\vecy)\,d\vecy,
\notag
\end{align}
which converges to $0$ due to~\eqref{equ:vbn 9}
and Lemma~\ref{lem:T eps prop}.
The second integral in the right hand side of~\eqref{equ:bv 1} 
also converges to $0$ due to Lemma~\ref{lem:T eps f 2}.
Therefore, we have 
\begin{equation}\label{equ:bv 3}
{\lim_{\eps\goto0}}
\int_{D_+^0\cup D_-^0}
\al(\vecy)
\,
\nabla \brac{z^\eps(\vecy)-z(\vecy)}
\cdot
\nabla   w(\vecy)
\,d\vecy
=0
\quad
\forall
w\in {W_0}.
\end{equation}
We choose in~\eqref{equ:bv 1} $w = z^\eps- z$. Then the
absolute value of the
 first integral on the 
right hand side of~\eqref{equ:bv 1} can be {estimated as}
\begin{align}
\snorm{I_1(z^\eps-z)}{×}
&
=
\Big{|}
\int_{D_+^0\cup D_-^0}
\al(\vecy)\,
\nabla  u^\eps(\vecy)
\left(\frac{A(\eps,\vecy)-I}{\eps} 
- A'(0,\vecy)\right)
\cdot 
\nabla  \big( z^\eps(\vecy)-z(\vecy)\big)\,d\vecy
\notag
\\
&
\quad
+
\int_{D_+^0\cup D_-^0}
\al(\vecy)\,
\nabla \big(u^\eps(\vecy) - u^0(\vecy)\big)
A'(0,\vecy)
\cdot
\nabla  
\big( z^\eps(\vecy)-z(\vecy)\big)
\,d\vecy\Big{|},
\notag
\\
&
\lesssim
\norm{\nabla  u^\eps}{L^2(\R^3)}
\,
\norm{\frac{A(\eps,\cdot)-I}{\eps} 
- A'(0,\cdot)}{L^\infty(\R^3)}
\norm{\nabla (z^\eps - z)}{L^2(\R^3)}
\notag
\\
&
\quad
+
\norm{\nabla \big( u^\eps - u^0\big)}{L^2(\R^3)}
\,
\norm{A'(0,\cdot)}{L^\infty(\R^3)}
\norm{\nabla  (z^\eps - z)}{L^2(\R^3)}.
\label{equ:bv 4}
\end{align}
The absolute value of the second integral in~\eqref{equ:bv 1}
when $w =  z^\eps-z$ is bounded by
\begin{equation}\label{equ:cv 1}
\snorm{I_2(z^\eps-z)}{×}
\le
\norm{
\sqrt{1+\snorm{\,\cdot\,}{×}^2}
\left(
\frac{\ga(\eps,\vecy) f(T^\eps(\vecy)) - f(\vecy)}{\eps}
-
\divv\big{(} V(\vecy)f(\vecy) \big{)}
\right)
}{L^2(\R^3)}
\,
\norm{z^\eps-z}{{W_0}}.
\end{equation}
Inequalities~\eqref{equ:bv 4} and~\eqref{equ:cv 1} give
\begin{align}
\norm{z^\eps-z}{{W_0}}
&
\le 
\norm{\nabla  u^\eps}{L^2(\R^3)}
\,
\norm{\frac{A(\eps,\cdot)-I}{\eps} 
- A'(0,\cdot)}{L^\infty(\R^3)}
\notag
\\
&
\quad
+
\norm{\nabla \big( u^\eps - u^0\big)}{L^2(\R^3)}
\,
\norm{A'(0,\cdot)}{L^\infty(\R^3)}
\notag
\\
&
\quad
+
\norm{
\sqrt{1+\snorm{\,\cdot\,}{×}^2}
\left(
\frac{\ga(\eps,\vecy) f(T^\eps(\vecy)) - f(\vecy)}{\eps}
-
\divv\big{(} V(\vecy)f(\vecy) \big{)}
\right)
}{L^2(\R^3)}.
\label{equ:cv 2}
\end{align}
Using this together with ~\eqref{equ:vbn 9} and Lemma~\ref{lem:T eps prop},
we can deduce from~\eqref{equ:cv 2}
\begin{equation}\label{equ:bv 5}
\lim_{\eps\goto 0}
\norm{z^\eps - z}{{W_0}}
=
0. 
\end{equation}
\end{proof}

Hence, we have shown that the solution of the transmission 
problem~\eqref{equ:tt 4} has a material derivative, and
thus a shape derivative.
The latter turns out to be the solution of a transmission
problem on the {nominal} interface $\G^0$.

\begin{lemma}\label{lem:sha der}
Under the assumption of Lemma~\ref{lem:z eps},
the shape derivative $u'$ of $u^\eps$ exists and is  
the solution of the transmission problem
\begin{equation}\label{equ:pro deri}
\begin{cases}
\Delta u' 
&=
0
\quad
\text{in } D_-^0\cup D_+^0
\\
\left[u'\right]
&= g_D
\quad
\text{on } \G^0
\\
\left[\al\dfrac{\partial u'}{\partial\bsb n}\right] 
&=
g_N
\quad
\text{on } \G^0
\\
\snorm{u'(\vecx)}{}
&=
\cO\brac{{\snorm{\vecx}{×}^{-1}}}
\quad
\text{as } \snorm{\vecx}{×}\goto
\infty,
\end{cases}
\end{equation}
where
\[
g_D := -
\jum{\dfrac{\partial u^0}{\partial\bsb n}}
\kappa
\quad
\text{and}
\quad
g_N :=
\nabla_{\G^0}\cdot 
\Big(
\kappa
\jum{
\al\nabla_{\G^0} u^0
}
\Big).
\]
\end{lemma}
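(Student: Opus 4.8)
The plan is to use the existence of the material derivative already secured in Lemma~\ref{lem:z eps} together with the identity $u'=\dot u-\nabla u^0\cdot V$ from Definition~\ref{def:Mat-ShapeDeriv}, and then to read off the strong form \eqref{equ:pro deri} by differentiating each ingredient of the perturbed problem \eqref{equ:tt 4} at $\eps=0$. Existence of $u'$ is immediate: since $\kappa\in C^1(\G^0)$, Lemma~\ref{lem:z eps} gives $\dot u\in W_0$, and $\nabla u^0\cdot V$ is well defined, so $u'\in W_0$. For the bulk equation I work on an arbitrary compact $K\subset\subset D_-^0\cup D_+^0$: by Lemma~\ref{lem:v shap mat K}, $u'=\lim_{\eps\goto0}(u^\eps-u^0)/\eps$ in $H^1(K)$, and for $\eps$ small $K$ lies on one side of the interface, where $-\al\triangle u^\eps=f=-\al\triangle u^0$ with the \emph{same} constant $\al$. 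Hence $\triangle(u^\eps-u^0)=0$ on $K$, and passing to the limit gives $\triangle u'=0$ on $D_-^0\cup D_+^0$. The decay $\snorm{u'(\vecx)}{}=\cO(|\vecx|^{-1})$ follows since $T^\eps=\mathrm{id}$ outside $B_R$, so for large $|\vecx|$ both $u^\eps$ and $u^0$ decay like $|\vecx|^{-1}$ and so does their scaled difference.

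The jump conditions hinge on the fact that traces and shape differentiation do not commute. Comparing the two branches of Definition~\ref{def:Mat-ShapeDeriv}, using that the material derivative commutes with taking traces and splitting $\nabla v^0=\nabla_{\G^0}v^0+\frac{\partial v^0}{\partial\bsb n}\vecn^0$ together with $\inpro{V}{\vecn^0}=\kappa$ on $\G^0$, I obtain for any domain function $v^\eps$ the relation
\[
\big(v^\eps|_{\G^\eps}\big)'=v'|_{\G^0}+\frac{\partial v^0}{\partial\bsb n}\,\kappa ,
\]
the left-hand side being the \emph{surface} shape derivative of the trace. Applying this to the Dirichlet condition \eqref{equ:b6}: the surface function $\jum{u^\eps}$ vanishes identically on $\G^\eps$, so its surface shape derivative is zero, giving $0=\jum{u'}+\jum{\frac{\partial u^0}{\partial\bsb n}}\kappa$, i.e. $\jum{u'}=-\kappa\jum{\frac{\partial u^0}{\partial\bsb n}}=g_D$.

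The hard part will be the Neumann jump. Writing the flux $q^\eps_\pm:=\al_\pm\nabla u^\eps_\pm$, condition \eqref{equ:b7} reads $\jum{q^\eps}\cdot\vecn^\eps=0$ on $\G^\eps$. I differentiate this surface product by the product rule (Lemma~\ref{pro:mat sha pro}), using the trace relation above componentwise, the commutation of the shape derivative with the spatial gradient so that $q'_\pm=\al_\pm\nabla u'_\pm$, and Lemma~\ref{lem:n shape}, $\vecn'=-\nabla_{\G^0}\kappa$. This produces
\[
0=\jum{\al\frac{\partial u'}{\partial\bsb n}}+\kappa\,\vecn^0\cdot\jum{\frac{\partial q^0}{\partial\bsb n}}-\jum{q^0}\cdot\nabla_{\G^0}\kappa ,
\]
where $\frac{\partial q^0}{\partial\bsb n}=(\vecn^0\cdot\nabla)q^0$. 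It remains to convert the two correction terms into surface quantities. Condition \eqref{equ:b3} gives $\jum{\al\frac{\partial u^0}{\partial\bsb n}}=0$, so $\jum{q^0}=\jum{\al\nabla_{\G^0}u^0}$ is tangential, and $-\al\triangle u^0=f$ yields $\divv q^0_\pm=-f$ on each side. Inserting the decomposition $\divv q=\nabla_{\G^0}\cdot q_T+(\nabla_{\G^0}\cdot\vecn^0)(q\cdot\vecn^0)+\vecn^0\cdot\frac{\partial q}{\partial\bsb n}$ for $q=q^0_\pm$ and subtracting, the $f$ and curvature contributions cancel in the jump (the latter because $\jum{\al\frac{\partial u^0}{\partial\bsb n}}=0$), leaving $\vecn^0\cdot\jum{\frac{\partial q^0}{\partial\bsb n}}=-\nabla_{\G^0}\cdot\jum{\al\nabla_{\G^0}u^0}$. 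Substituting and using the surface Leibniz rule $\nabla_{\G^0}\cdot(\kappa\jum{\al\nabla_{\G^0}u^0})=\kappa\,\nabla_{\G^0}\cdot\jum{\al\nabla_{\G^0}u^0}+\jum{\al\nabla_{\G^0}u^0}\cdot\nabla_{\G^0}\kappa$ collapses everything to $\jum{\al\frac{\partial u'}{\partial\bsb n}}=\nabla_{\G^0}\cdot(\kappa\jum{\al\nabla_{\G^0}u^0})=g_N$.

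The points I expect to be genuinely delicate, rather than routine, are the rigorous justification of the trace-commutation identity and of $(\nabla u^\eps)'=\nabla u'$ up to the interface: both require enough interior elliptic regularity of $u^0$ near $\G^0$ (here $f\in H^1$ helps) to give meaning to the trace of $\nabla u'$ and to the normal derivative $\frac{\partial q^0}{\partial\bsb n}$ on $\G^0$, together with careful tangential/normal bookkeeping in the surface-divergence decomposition.
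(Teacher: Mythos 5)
Your proposal is correct in substance and arrives at exactly the paper's $g_D$ and $g_N$, but by a genuinely different route. The paper never differentiates the transmission conditions pointwise: it differentiates the variational identity $\int_{D^\eps_-\cup D^\eps_+}\al^\eps\nabla u^\eps\cdot\nabla v\,d\vecx=\inpro{f}{v}_{L^2(\R^3)}$ against a fixed $v\in C_0^\infty(\R^3)$, using the Hadamard domain/surface differentiation formulas of Lemma~\ref{pro:mat sha pro}\eqref{ite:t2} together with Lemma~\ref{lem:n shape} applied to the normal derivative of the \emph{test function}; interior test functions then give $\triangle u'=0$, the tangential Green formula extracts the Neumann jump in weak form, and the Dirichlet jump comes from differentiating $\int_{\G^\eps}\jum{u^\eps}v\,d\sigma=0$. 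You instead obtain $\triangle u'=0$ directly from Lemma~\ref{lem:v shap mat K} (arguably cleaner than the paper's detour through the differentiated variational identity), and derive both jumps from the strong form via your trace-commutation identity $\big(v^\eps|_{\G^\eps}\big)'=v'|_{\G^0}+\kappa\,\partial v^0/\partial\vecn$ --- which is correct (it follows from $\dot v|_{\G^0}$ being common to both branches of Definition~\ref{def:Mat-ShapeDeriv} and $V=\kappa\vecn^0$ on $\G^0$), and whose weak avatar is precisely the paper's surface-integral differentiation --- plus, for the Neumann jump, the decomposition $\divv q=\nabla_{\G^0}\cdot q_T+(\nabla_{\G^0}\cdot\vecn^0)(q\cdot\vecn^0)+\vecn^0\cdot\partial_\vecn q$ with the curvature and $f$ contributions cancelling in the jump thanks to \eqref{equ:b3} and the continuity of $f$ across $\G^0$; I checked the algebra and the cancellations are right. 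What the paper's weak route buys is that every intermediate object is paired with a smooth test function, so nothing beyond the $W_0$-level convergence of Lemma~\ref{lem:z eps} is ever invoked; your pointwise route requires, as you yourself flag, trace sense for $\vecn^0\cdot\jum{\partial q^0/\partial\vecn}$ and for $(\nabla u^\eps)'=\nabla u'$ up to $\G^0$, i.e.\ piecewise $H^2$-regularity of $u^0$ near the $C^{1,1}$ interface (available here since $f\in H^1(\R^3)$), and strictly speaking the intermediate quantity $\vecn^0\cdot\jum{\partial q^0/\partial\vecn}$ taken by itself is only distributionally meaningful, so a fully rigorous version of that step should be run against a test function --- at which point it essentially collapses into the paper's argument. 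What your route buys in exchange is transparency: the geometric origin of each term (e.g.\ $\vecn'=-\nabla_{\G^0}\kappa$ producing $-\jum{q^0}\cdot\nabla_{\G^0}\kappa$, and the surface Leibniz rule reassembling $g_N$) is completely explicit, whereas in the paper these contributions emerge somewhat opaquely from the Hadamard formulas.
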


\begin{proof} 
Existence of $u'$ is confirmed by Lemma~\ref{lem:z eps}.
In this proof only, for notational convenience, we use $\vecn_\pm^\eps$ to 
indicate the normal vector to $\G^\eps$ pointing outwards $D_\pm^\eps$, 
respectively. Note here that 
$\vecn^\eps=\vecn_-^\eps = -\vecn_+^\eps$.
From~\eqref{equ:tr0} we deduce
\begin{equation}\label{equ:soll 1}
\al_- 
\int_{D_-^\eps}
\nabla u_-^\eps\cdot \nabla  v\,d\vecx
+
\al_+ 
\int_{D_+^\eps}
\nabla u_+^\eps\cdot \nabla  v\,d\vecx
=
\inpro{f}{v}_{L_2(\R^3)}
\qquad
\forall v\in C_0^\infty(\R^3).
\end{equation}
Denoting 
\[
J(D_\pm^\eps) 
:=
\al_\pm
\int_{D_\pm^\eps}
\nabla u_\pm^\eps(\vecx)\cdot \nabla  v(\vecx)\,d\vecx
\]
and using Green's formula, we obtain
\begin{align}
J(D_\pm^\eps) 
&
=
-\al_\pm 
\int_{D_\pm^\eps}
u_\pm^\eps(\vecx) \triangle  v(\vecx)\,d\vecx
+
\al_\pm 
\int_{\G^\eps}
u_\pm^\eps(\vecx)
\frac{\partial v}{{\partial\vecn_\pm}}\,d\sigma
=:
\mathcal{J}_1(D_\pm^\eps) 
+
\mathcal{J}_2(D_\pm^\eps).
\notag
\end{align}
By Lemma~\ref{pro:mat sha pro}, $u'\triangle v$ is the 
shape derivative of  $u^\eps\triangle v$. On the other hand,
by Lemmas~\ref{pro:mat sha pro}--\ref{lem:n shape}, 
the shape derivative of ${\dfrac{\partial v}{\partial\vecn}\bigg|_{\Gamma^\eps}}
=\nabla v\cdot\vecn^\eps$ is 
$-\nabla_{\G^0} v\cdot\nabla_{\G^0}\inpro{V}{\vecn^0}$, so that 
the shape derivative of $\displaystyle u^\eps{\dfrac{\partial v}{\partial\vecn}\bigg|_{\Gamma^\eps}}$
is $\displaystyle u'{\dfrac{\partial v}{\partial\vecn}\bigg|_{\Gamma^0}}
- u^0\Big(\nabla_{\G^0} v\cdot\nabla_{\G^0}\inpro{V}{\vecn^0}\Big)$.
Using Lemma~\ref{pro:mat sha pro},
we deduce
\begin{align}
d\mathcal{J}_1(D_\pm^\eps)|_{\eps=0} 
&
=
-\al_\pm 
\int_{D_\pm^0}
u_\pm'(\vecx) \triangle  v\,d\vecx
-
\al_\pm 
\int_{\G^0}
u^0
\triangle  v
\inpro{V}{\vecn_\pm^0}\,d\sigma
\notag
\end{align}
and
\begin{align}
d\mathcal{J}_2(D_\pm^\eps)|_{\eps=0} 
&
=
\al_\pm
\int_{\G^0}
\left(
u_\pm'
{\frac{\partial v}{\partial\vecn_\pm}}
-
u^0
\Big(
\nabla_{\G^0} v\cdot\nabla_{\G^0}\inpro{V}{\vecn^0}
\Big)
\right)
d\sigma
+
\al_\pm
\int_{\G^0}
{\frac{\partial}{\partial\vecn_\pm}}
\Big(
u^0
{\frac{\partial v}{\partial\vecn_\pm}}
\Big)
\inpro{V}{\vecn_\pm^0}
d\sigma
\notag
\\
&
\quad
+
\al_\pm
\int_{\G^0}
\divv_{{\G^0}}(\vecn_\pm^0)
u^0
{\frac{\partial v}{\partial\vecn_\pm}}
\inpro{V}{\vecn_\pm^0}
\,d\sigma,
\notag
\end{align}
{since} $u_-^0 = u_+^0$ on the interface $\G^0$ by~\eqref{equ:b2}.
Therefore, differentiating by $\eps$ both sides of~\eqref{equ:soll 1},
using Green's
formula, the jump condition~\eqref{equ:b3} and noting that
$\triangle v = \triangle_{\G^0} v + 
{\divv_{\G^0}(\vecn^0)\partial v/\partial\vecn + 
\partial^2v/\partial\vecn^2}$, we obtain
\begin{align}
0
=&
\al_-
\int_{D_-^0}
\nabla u'\cdot\nabla v
\,d\vecx
+
\al_+
\int_{D_+^0}
\nabla u'\cdot\nabla v
\,d\vecx\\
&-
\al_-
\int_{\G^0}
u\inpro{V}{\vecn_-^0} \triangle_{\G^0} v\,d\sigma
\notag
-
\al_+
\int_{\G^0}
u\inpro{V}{\vecn_+^0} \triangle_{\G^0} v\,d\sigma \\
&-
\al_-
\int_{\G^0}
u
\nabla_{\G^0} v\cdot
\nabla_{\G^0}\inpro{V}{\vecn_-^0}
\notag
-
\al_+
\int_{\G^0}
u
\nabla_{\G^0} v \cdot
\nabla_{\G^0}\inpro{V}{\vecn_+^0}.
\notag
\end{align}
Applying the tangential Green formula on the 
third and the fourth integrals on the right hand side of 
the above identity and the product rule, the above 
identity can be written as 
\begin{align}
0 
& 
=
\al_-
\int_{D_-^0}
\nabla u'\cdot\nabla v
\,d\vecx
+
\al_+
\int_{D_+^0}
\nabla u'\cdot\nabla v
\,d\vecx
+
\int_{\G^0}
(\al_-\nabla_{\G^0} u_-^0 - \al_+\nabla_{\G^0} u_+^0)
\cdot 
\nabla_{\G^0} v
\inpro{V}{\vecn_-^0}
\,d\sigma.
\label{equ:tt 5}
\end{align}
We choose in~\eqref{equ:tt 5} $v\in  C_0^\infty(D_\pm)$ to obtain
\begin{equation}\label{equ:soll 3}
\al 
\triangle  u'(\vecx)
=
0,
\quad
\vecx\in
D_\pm^0. 
\end{equation}
We now choose $v\in C_0^\infty(\R^3)$ and applying the 
Green's identity to the first two integrals on the right hand side 
of~\eqref{equ:tt 5}, noting~\eqref{equ:soll 3},
to obtain
\begin{align}
0
&
=
\al_-
\int_{\G^0}
v
\frac{\partial u'_-}{\partial\bsb n_-} 
\,d\sigma
+
\al_+
\int_{\G^0}
v
\frac{\partial u'_+}{\partial\bsb n_+} 
+
\int_{\G^0}
(\al_-\nabla_{\G^0} u_-^0 - \al_+\nabla_{\G^0} u_+^0)
\cdot 
\nabla_{\G^0} v
\inpro{V}{\vecn_-^0}
\,d\sigma.
\end{align}
Applying the tangential Green formula on the surface $\G^0$
to the last term on the right hand side of the above identity, we 
deduce
\[
\int_{\G^0}
v
\jum{
\al
\frac{\partial u'}{\partial\vecn}
} 
\,d\sigma
=
\int_{\G^0}
v
\nabla_{\G^0}\cdot 
\Big(
\inpro{V}{{\vecn_-^0}}
\jum{
\al\nabla_{\G^0} u^0
}
\Big)
\,d\sigma,
\]
yielding
\begin{equation}\label{equ:ju ju cond}
\jum{
\al
\frac{\partial u'}{\partial\bsb n}
} 
=
\nabla_{\G^0}\cdot 
\Big(
\inpro{V}{{\vecn_-^0}}
\jum{
\al\nabla_{\G^0} u^0
}
\Big)
\quad
\text{on}\ \G^0.
\end{equation}
{Recalling the transmission} conditions~\eqref{equ:b6}, we have for any 
smooth function $v$
\[
\int_{\G^{\eps}} \jum{u^\eps}v\,d\sigma 
=
0.
\]
Differentiating by $\eps$ both sides, applying Lemma~\ref{pro:mat sha pro}
we have
\begin{align*}
0
&
=
d
\brac{\int_{\G^{\eps}}\jum{u^\eps}v\,d\sigma}
=
d
\brac{\int_{\G_{-}^{\eps}}{u_-^{\eps}}v\,d\sigma
-
\int_{\G_{+}^{\eps}}{u_+^{\eps}}v\,d\sigma}
\\
&
=
\int_{\G^0} (u_-^0v)'
+
 \int_{\G^0}
\left(
\frac{\partial(u_-^0 v)}{\partial\bsb n_-}
+
\divv_{\G^0}({\vecn_-^0}) 
(u_-^0v)
\right)
\inpro{V}{{\bsb n_-^0}}
\,d\sigma
\\
&
\quad
-
 \int_{\G^0} (u_+^0 v)'
-
\int_{\G^0}
\left(
\frac{\partial(u_+^0 v)}{\partial\bsb n_+}
+
\divv_{\G^0}({\vecn_+^0})
(u_+^0 v)
\right)
\inpro{V}{{\bsb n_+^0}}
\,d\sigma
\\
&
=
\int_{\G^0}
\jum{u'} v\,d\sigma
+
\int_{\G^0}
\jum{\frac{\partial u^0}{\partial\bsb n}} 
v\inprod{V}{{\bsb n_-^0}}\,d\sigma
\\
&
\quad
+
\int_{\G^0}
\jum{u^0}
\left(
\frac{\partial v}{\partial \bsb n_-}
+
\divv_{\G^0}({\bsb n_-^0})
\,v
\right)
\inprod{V}{{\bsb n_-^0}}\,d\sigma
\\
&
=
\int_{\G^0}
\jum{u'} v\,d\sigma
+
\int_{\G^0}
\jum{\frac{\partial u^0}{\partial\bsb n_-}} 
v\inprod{V}{{\bsb n_-^0}}\,d\sigma,
\end{align*}
noting that $\jum{u^0}=0$. Hence, there holds
\begin{equation}\label{equ:trans cond der}
\jum{u'}
=
-
\jum{\frac{\partial u^0}{\partial\bsb n}}
\inpro{V}{{\bsb n_-^0}}
=: g_D.
\end{equation}
Hence, from~\eqref{equ:soll 3}, \eqref{equ:ju ju cond} 
and~\eqref{equ:trans cond der}, the shape derivative 
$u'\in H^1(D_-^0)\times H_w^1(D_+^0)$ is the weak solution of the 
transmission problem~\eqref{equ:pro deri}.
\end{proof}

\subsection{Random interfaces}

In Subsection~\ref{subsec:mat shape der}, 
we have  defined material and shape derivatives in which 
the quantity $\kappa(\vecx)$ does not contain uncertainty.
Since the transmission problem~\eqref{equ:original prob} is posed on a 
domain with a random interface (see~\eqref{equ:rand inter}), 
the shape derivative also depends on $\om$, and 
it is necessary to 
approximate 
the mean and the covariance fields of the random solutions.
The result is given in the following lemma, where we recall the notation
$\cH^1(D_\pm^0)$ indicating $H^1(D_-^0)$ or $H_w^1(D_+^0)$.

\begin{lemma}\label{lem:mean cov} Let $u^\eps(\om)$ be the solution of 
the transmission problem~\eqref{equ:lap equ}--\eqref{equ:inft cond 1}
with the random interface $\G^\eps(\om)$ given by~\eqref{equ:rand inter},
and let $u^0$ denote the solution of the transmission problem
with the reference interface $\G^0$.
Assume that the perturbation {function $\kappa$
belongs to $L^k(\Om, \ac{C^1(\G^0)})$ for an integer $k$}
and $f\in H^1(\R^3)\cap {W_0^*}$.
Then, for any compact subset $K\subset\subset D_\pm^0$,
the expectation and the {$k$-th order central moments} of the solution
$u^\eps(\om)$ can be approximated, respectively, by
\begin{equation}\label{equ:Exp}
{\mE [u^\eps]}
=
u^0
+
o(\eps)
\quad
\text{in}
\quad 
{H^1(K)}
\end{equation}
and
\begin{equation}\label{equ:cov app}
\begin{aligned}
{\cM^k[u^\eps - \mE[u^\eps]]
= 
\eps^k\cM^k[u']
+
o(\eps^k)
\quad
\text{in}
\quad
H^1_{\rm mix}(K^k).}
\end{aligned}
\end{equation}
Moreover
\begin{equation}\label{equ:cor app}
\begin{aligned}
{\cM^k[u^\eps - u^0]
= 
\eps^k\cM^k[u']
+
o(\eps^k)
\quad
\text{in}
\quad
H^1_{\rm mix}(K^k).}
\end{aligned}
\end{equation}
\end{lemma}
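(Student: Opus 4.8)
The plan is to lift the pointwise-in-$\om$ convergence supplied by the deterministic theory to convergence of the statistical moments, by a dominated-convergence argument in the relevant Bochner spaces. For almost every $\om\in\Om$ we have $\kappa(\cdot,\om)\in C^1(\G^0)$, so Lemmas~\ref{lem:u eps}--\ref{lem:sha der} apply to the deterministic interface $\G^\eps(\om)$ and, together with Lemma~\ref{lem:v shap mat K}, give
\[
\frac{u^\eps(\cdot,\om)-u^0}{\eps}\longrightarrow u'(\cdot,\om)\quad\text{in } H^1(K),\qquad \eps\to 0,
\]
for every compact $K\subset\subset D_\pm^0$. Writing $u^\eps(\om)-u^0=\eps\,u'(\om)+\eps\,R^\eps(\om)$, this says $\norm{R^\eps(\om)}{H^1(K)}\to0$ for a.e.\ $\om$. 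Since the shape-derivative problem~\eqref{equ:pro deri} is linear in its data $g_D,g_N$, which are linear in $\kappa$, the bounded solution operator commutes with the Bochner integral; hence $\mE[u']$ solves~\eqref{equ:pro deri} with data $\mE[g_D],\mE[g_N]$, and the centering~\eqref{equ:kappa sym} forces $\mE[u']=0$. The same linearity yields $\norm{u'(\om)}{H^1(K)}\lesssim\norm{\kappa(\om)}{C^1(\G^0)}$, so $u'\in L^k(\Om,H^1(K))$.

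First I would settle~\eqref{equ:Exp}. Using $\mE[u']=0$,
\[
\mE[u^\eps]-u^0=\eps\,\mE[u']+\eps\,\mE[R^\eps]=\eps\,\mE[R^\eps],
\]
and by the Bochner inequality $\norm{\mE[R^\eps]}{H^1(K)}\le\mE\big[\norm{R^\eps}{H^1(K)}\big]$. The right-hand side tends to $0$ by dominated convergence once an integrable majorant is produced, giving $\mE[u^\eps]-u^0=o(\eps)$ in $H^1(K)$.

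The crux, and the main obstacle, is the construction of an $\eps$-uniform integrable majorant for $R^\eps$. Coercivity (Lemma~\ref{lem:nor snor}) applied to the variational problem~\eqref{equ:ab 1}, whose data are linear in the velocity field $V$ associated with $\kappa(\cdot,\om)$ through~\eqref{equ:V def}, together with the energy bound~\eqref{equ:br 0}, gives $\norm{z(\om)}{W_0}\lesssim\norm{\kappa(\om)}{C^1(\G^0)}$; propagating the $\om$-dependence through~\eqref{equ:cv 2} yields $\norm{R^\eps(\om)}{H^1(K)}\le C\big(1+\norm{\kappa(\om)}{C^1(\G^0)}\big)$. The subtlety is that $T^\eps(\cdot,\om)$ is an admissible diffeomorphism, and hence these estimates hold, only for $\eps<c/\norm{\kappa(\om)}{C^1(\G^0)}$, so no fixed $\eps$ is small simultaneously for all $\om$. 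I would circumvent this by truncation: on $\{\norm{\kappa(\om)}{C^1(\G^0)}\le M\}$ the majorant is uniform for $\eps<\eps_0(M)$ and dominated convergence applies, while the tail is controlled by $\int_{\{\norm{\kappa}{C^1(\G^0)}>M\}}\big(1+\norm{\kappa(\om)}{C^1(\G^0)}\big)^k\,d\mP(\om)$, which tends to $0$ as $M\to\infty$ because $\kappa\in L^k(\Om,C^1(\G^0))$. Sending first $\eps\to0$ and then $M\to\infty$ shows $R^\eps\to0$ in $L^k(\Om,H^1(K))$, which closes~\eqref{equ:Exp} and feeds the moment estimates.

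It remains to pass to the higher moments. Raising $u^\eps(\om)-u^0=\eps\big(u'(\om)+R^\eps(\om)\big)$ to the $k$-fold tensor power gives $(u^\eps(\om)-u^0)^{\otimes k}=\eps^k\big(u'(\om)+R^\eps(\om)\big)^{\otimes k}$, whence
\[
\cM^k[u^\eps-u^0]-\eps^k\cM^k[u']=\eps^k\int_\Om\Big(\big(u'(\om)+R^\eps(\om)\big)^{\otimes k}-u'(\om)^{\otimes k}\Big)\,d\mP(\om).
\]
Expanding by multilinearity, the integrand is a sum of tensors each carrying at least one factor $R^\eps(\om)$; using $\norm{a_1\otimes\dots\otimes a_k}{H^1_{\rm mix}(K^k)}=\prod_i\norm{a_i}{H^1(K)}$, the Bochner inequality, and H\"older in $\om$ with all exponents equal to $k$, each such term is bounded by a product of $L^k(\Om,H^1(K))$-norms in which at least one factor is $\norm{R^\eps}{L^k(\Om,H^1(K))}\to0$ while the others stay bounded. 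Hence the integral is $o(1)$ and~\eqref{equ:cor app} follows. For~\eqref{equ:cov app} I would use $u^\eps-\mE[u^\eps]=\eps\,u'+\eps\,\tilde R^\eps$ with $\tilde R^\eps:=R^\eps-\mE[R^\eps]$ (again invoking $\mE[u']=0$); since $\tilde R^\eps\to0$ in $L^k(\Om,H^1(K))$, the identical tensor expansion and H\"older/dominated-convergence argument yields $\cM^k[u^\eps-\mE[u^\eps]]=\eps^k\cM^k[u']+o(\eps^k)$ in $H^1_{\rm mix}(K^k)$.
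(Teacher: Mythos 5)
Your proof follows essentially the same skeleton as the paper's: the decomposition $u^\eps(\om)=u^0+\eps u'(\om)+\eps h(\eps,\om)$ in $H^1(K)$ (your $R^\eps$ is the paper's $h$), the observation that $\mE[u']=0$ because the shape-derivative problem is linear in $\kappa$ and $\kappa$ is centered by \eqref{equ:kappa sym}, and, for the $k$-th moments, the multilinear expansion of $(u'+h)^{\otimes k}-(u')^{\otimes k}$ estimated via the Bochner inequality (the paper cites \cite[Corollary V.5.1]{Yos65}) and H\"older's inequality in $\om$; your generalized H\"older with all exponents equal to $k$ is equivalent, after grouping the $j$ factors of $h$ and the $k-j$ factors of $u'$, to the paper's choice $p=k/j$, $q=k/(k-j)$. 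The central moments are handled identically (the paper replaces $h$ by $h-\mE[h]$, you use $\tilde R^\eps=R^\eps-\mE[R^\eps]$).

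Where you go beyond the paper is the step $\norm{R^\eps}{L^k(\Om,H^1(K))}\to 0$: the paper simply \emph{asserts} this property of $h$ in display, with no argument, whereas you attempt a majorant-plus-truncation proof and correctly identify the real obstruction, namely that the deterministic estimates of Lemmas~\ref{lem:u eps}--\ref{lem:sha der} are available only for $\eps<c/\norm{\kappa(\om)}{C^1(\G^0)}$, so no fixed $\eps$ is admissible simultaneously for all $\om$ when $\kappa$ is merely in $L^k$. Your truncation does not quite close this, however: for fixed $\eps$, on the tail set where $\norm{\kappa(\om)}{C^1(\G^0)}>M$ the bound $\norm{R^\eps(\om)}{H^1(K)}\le C\big(1+\norm{\kappa(\om)}{C^1(\G^0)}\big)$ is precisely the estimate that is \emph{not} available (it was derived under $\eps<c/\norm{\kappa(\om)}{C^1(\G^0)}$, and indeed $u^\eps(\om)$ itself need not be well defined there), so the tail contribution is not controlled by $\int_{\{\norm{\kappa}{C^1(\G^0)}>M\}}\big(1+\norm{\kappa(\om)}{C^1(\G^0)}\big)^k\,d\mP(\om)$ as claimed. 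This residual gap is inherited from the paper's own formulation---with $\kappa\in L^k(\Om,C^1(\G^0))$ only, the solution $u^\eps(\om)$ at fixed $\eps>0$ exists only on the event that $\eps\norm{\kappa(\om)}{C^1(\G^0)}$ is sufficiently small---and it disappears under the additional assumption $\kappa\in L^\infty(\Om,C^1(\G^0))$, in which case your argument (and the paper's unproved assertion) becomes complete. In short: same route as the paper, with a more honest but still incomplete treatment of the one analytic step the paper leaves unjustified.
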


\begin{proof}
It follows from Lemmas~\ref{lem:v shap mat K} and~\ref{lem:sha der} that 
\begin{equation}\label{equ:u u dot}
u^\eps(\vecx,\om)
=
u^0(\vecx)
+
\eps
u'(\vecx,\om) 
+
{\eps h(\eps,\vecx,\om)}
\quad
\text{in}
\quad 
H^1(K),
\end{equation}
where $h$ satisfies
{$\displaystyle\lim_{\eps\goto 0} \ac{\norm{h(\eps,\cdot,\cdot)}{L^k(\Om,H^1(K))}}
=0$}. This implies
\begin{align}
{
\mE [u^\eps(\vecx,\cdot)]
=
u^0(\vecx)
+
\eps\mE[u'(\vecx,\cdot)]
+
\eps\mE [h(\eps,\vecx,\cdot)]
\quad
\text{in}
\quad 
H^1(K).
\notag}
\end{align}
Here, $u'$ is the solution of~\eqref{equ:pro deri} in which 
the function $\kappa$ defining $g_D$ and $g_N$ depends on $\om$ 
and satisfies $\mE [\kappa] = 0$; see~\eqref{equ:kappa sym}.
Since $u'$ depends linearly on $\kappa$, there also holds
$\mE [u'] = 0$, yielding~\eqref{equ:Exp}.

{
By the definition of the statistical moments \eqref{moments-def} we have
\[
\begin{split}
\cM^k[u^\epsilon - u^0] - \epsilon^k\cM^k[u'] = \epsilon^k \big(\cM^k[u'+h] - \cM^k[u']\big)
\end{split}
\]
\ac{
and by \cite[Corollary V.5.1]{Yos65}
\[
\|\cM^k[u'+h] - \cM^k[u']\|_{H^1_{\rm mix}(K^k)} 
\leq 
\mE\bigg[\| (u'+h) \otimes \dots \otimes (u'+h) - u' \otimes \dots \otimes u'\|_{H^1_{\rm mix}(K^k)} \bigg] 
=: \cE.
\]
Then by the triangle inequality, binomial formula and H\"older's inequality with $p = \dfrac{k}{j}$ and $q = \dfrac{k}{k-j}$ 
\[
\begin{split}
\|\cE\|_{H^1_{\rm mix}(K^k)} 
&=
\mE\bigg[\| 
\sum_{ \scriptsize
\begin{split}v_i &= u' \text{ or } h, \\
(v_1,\dots,v_k) &\neq (u',\dots,u')
\end{split}
} v_1\ \otimes \dots \otimes v_k\|_{H^1_{\rm mix}(K^k)} \bigg]\\
&\leq  
\sum_{ \scriptsize
\begin{split}v_i &= u' \text{ or } h, \\
(v_1,\dots,v_k) &\neq (u',\dots,u')
\end{split}
} 
\mE\bigg[\| v_1\ \otimes \dots \otimes v_k\|_{H^1_{\rm mix}(K^k )} \bigg]
\\
& =  
\sum_{ \scriptsize
\begin{split}v_i &= u' \text{ or } h, \\
(v_1,\dots,v_k) &\neq (u',\dots,u')
\end{split}
} 
\mE\bigg[\| v_1\|_{H^1(K)}  \dots \| v_k\|_{H^1(K)}\bigg]
\\
&= \sum_{j=1}^k \binom{k}{j} \mE\bigg[\|h\|_{H^1(K)}^j \|u'\|_{H^1(K)}^{k-j}\bigg] \\
&\leq \sum_{j=1}^k \binom{k}{j} \mE\bigg[\|h\|_{H^1(K)}^{jp}\bigg]^{\frac{1}{p}}  \mE\bigg[\|u'\|_{H^1(K)}^{(k-j)q}\bigg]^{\frac{1}{q}} \\
&=\sum_{j=1}^k \binom{k}{j} \mE\bigg[\|h\|_{H^1(K)}^{k}\bigg]^{\frac{j}{k}}  \mE\bigg[\|u'\|_{H^1(K)}^{k}\bigg]^{\frac{k-j}{k}} \\
&=\sum_{j=1}^k \binom{k}{j}  \|h\|_{L^k(\Omega,H^1(K))}^j
\|u'\|_{L^k(\Omega,H^1(K))}^{k-j} \\
&= o(1)
\end{split}
\]
and \eqref{equ:cor app} follows.
An analogous estimate holds for}
\[
\begin{split}
\cM^k[u^\epsilon - \mE[u^\eps]] - \epsilon^k\cM^k[u'] = \epsilon^k \big(\cM^k[u'+(h-\mE[h])] - \cM^k[u']\big).
\end{split}
\]

}

\end{proof}

{The above lemma states in particular that $\cM^k[u^\eps-u^0]$, 
$\cM^k[u^\eps-\mE[u^\eps]]$ and $\eps^k\cM^k[u']$ coinside in 
the limit $\eps \to 0$, indicating that $\eps^k\cM^k[u']$ may 
be a good approximation for $\cM^k[u^\eps-u^0]$ and 
$\cM^k[u^\eps-\mE[u^\eps]]$ {if} $\eps$ is small. 
On the other hand, the task of approximation of 
$\eps^k\cM^k[u']$ is significantly simpler than 
approximation of $\cM^k[u^\eps-u^0]$ or $\cM^k[u^\eps-\mE[u^\eps]]$ 
and reduces to solving the homogeneous transmission problem~\eqref{equ:pro deri}.}

\section{Boundary reduction}\label{sec:boundary reduction}

In this section we briefly recall boundary integral equation
methods to solve~\eqref{equ:pro deri}. We rewrite here this problem 
for convenience.

Find $u'\in H^1(D_-^0)\times H^1_w(D_+^0)$ satisfying
\begin{equation}\label{equ:pro trans}
\begin{cases}
\triangle u'
&
=
0
\quad
\text{in } D^0_{\pm}
\\
\jum{u'}
&
=
g_D(\om)
\quad
\text{on } \G^0
\\
\jum{\al\dfrac{\partial u'}{\partial\bsb n}}
&
=
g_N(\om)
\quad
\text{on } \G^0
\\
|u'(\vecx)|
&
=
\cO\brac{\snorm{\vecx}{×}^{-1}}
\quad
\text{as } \snorm{\vecx}{×}\goto
\infty.
\end{cases}
\end{equation}
The  single and double layer potentials are given by 
\begin{equation}\label{equ:sim lay}
{\tilde \cV w(\vecx)}
=
\int_{\G^0}
\frac{1}{\snorm{\vecx-\vecy}{×}}\,
w(\vecy)
\,d\sigma_{\vecy},
\qquad
{\cW v(\vecx)}
=
\int_{\G^0}
\frac{\partial}{\partial \bsb n_{\vecy}}
\frac{1}{\snorm{\vecx-\vecy}{×}}\,
v(\vecy)
\,d\sigma_{\vecy},
\quad
\vecx\in D^0_{\pm}
\end{equation}
{for $w \in H^{-1/2}(\Gamma^0)$ and $v \in H^{1/2}(\Gamma^0)$.}
The limits of these potentials for $\vecx$ approaching $\G^0$
 are given by (see~\cite[page 14]{HsiWen08})
\begin{align}
\cV u(\vecx)
&
:=
\lim_{\vecy\goto\vecx\atop \vecy\in D^0_\pm}
{\tilde \cV u(\vecy)}
\quad
\text{for }
\vecx\in \G^0,
\label{equ:V Ga}
\\
\cK u(\vecx)
&
:=
\lim_{\vecy\goto\vecx\atop \vecy\in D^0_\pm}
{\cW u(\vecy)} \mp \frac{1}{2} u(\vecx)
\quad
\text{for }
\vecx\in\G^0,
\label{equ:K Ga}
\\
\cK' u(\vecx)
&
:=
\lim_{\vecy\goto\vecx\atop \vecy\in D^0_\pm}
{\bsb n_{\vecx} \cdot \nabla_{\vecy} \tilde \cV u(\vecy) }
\pm \frac{1}{2} u(\vecx)
\quad
\text{for } \vecx\in \G^0,
\label{equ:K d Ga}
\\
\cD u(\vecx)
&
:=
-
\lim_{\vecy\goto\vecx\atop \vecy\in D^0_\pm}
{\bsb n_{\vecx}
\cdot 
\nabla_{\vecy} \cW u(\vecy) }
\quad
\text{for } \vecx\in \G^0.
\label{equ:D Ga}
\end{align}
%
The solution of \eqref{equ:pro trans}
is given by
\begin{equation}\label{equ:u V W}
u'(\vecx)
=
\begin{cases}
{\tilde \cV}(\frac{\partial u'_-}{\partial\bsb n})(\vecx) 
- 
{\cW}u'_-(\vecx),
&
\vecx\in D^0_-,
\\
{\cW}u'_+(\vecx)
-
{\tilde \cV}(\frac{\partial u'_+}{\partial\bsb n})(\vecx), 
&
\vecx\in D^0_+;
\end{cases}
\end{equation}
see e.g.~\cite{HsiWen08}.
The Dirichlet-to-Neumann operators
are
\begin{align}
\cS_- u'_-
&
:=
\frac{\partial u'_-}{\partial \bsb n}
=
\cV^{-1}(\frac{1}{2} I + \cK) u'_-, 
\label{equ:DtN 1}
\\
\cS_+ u'_+
&
=
\frac{\partial u'_+}{\partial \bsb n}
=
\cV^{-1}(\cK - \frac{1}{2} I) u'_+ .
\label{equ:DtN 2}
\end{align}
These equalities together with~\eqref{equ:u V W} 
imply
\begin{equation}\label{equ:u V W 2}
u'(\vecx)
=
\begin{cases}
({\tilde\cV}\cS_- - {\cW})(u'_-)(\vecx) 
=: E_-(u'_-)(\vecx),
&
\vecx\in D^0_-
\\
({\cW} - {\tilde\cV}\cS_+)(u'_+)(\vecx)
=: E_+(u'_+)(\vecx), 
&
\vecx\in D^0_+.
\end{cases}
\end{equation}
The randomness of the interface $\G(\om)$ which is given via the randomness
of the vector field $V(\eps,\vecx,\om)$ implies the randomness
in the solution $u$. From~\eqref{equ:u V W 2}, we have
\[
u'(\vecx,\om)
=
\begin{cases}
E_-(u'_-(\om)|_{\G^0})(\vecx),
&
\vecx\in D^0_-,\\
E_+(u'_+(\om)|_{\G^0})(\vecx),
&
\vecx\in D^0_+.
\end{cases}
\]
Tensorizing and integrating both sides of the above equation, 
we deduce
\begin{equation}\label{equ:u tensor ga}
\Covv[u'](\vecx_1,\vecx_2)
=
\begin{cases}
(E_{-,\vecx_1}\otimes E_{-,\vecx_2})\Corr[u'_-|_{\G^0}](\vecx_1,\vecx_2),
&
\vecx_1, \vecx_2\in D^0_-,
\\
(E_{+,\vecx_1}\otimes E_{+,\vecx_2})\Corr[u'_+|_{\G^0}](\vecx_1, \vecx_2),
&
\vecx_1, \vecx_2\in D^0_+,
\end{cases}
\end{equation}
and in general
\begin{equation}
\cM^k[u'](\vecx_1,\dots,\vecx_k)
=
\begin{cases}
(E_{-,\vecx_1}\otimes \dots \otimes E_{-,\vecx_k})\cM^k[u'_-|_{\G^0}]({\vecx_1},\dots,\vecx_k),
&
{\vecx_1}, \dots,\vecx_k\in D^0_-,
\\
(E_{+,\vecx_1}\otimes \dots \otimes E_{+,\vecx_k})\cM^k[u'_+|_{\G^0}]({\vecx_1},\dots,\vecx_k),
&
{\vecx_1}, \dots,\vecx_k\in D^0_+.
\end{cases}
\end{equation}
{Equation~\eqref{equ:u tensor ga}} suggests that the covariance of the 
solution $u'$ in $D^0_\pm$ can be computed from the 
correlation function of the Dirichlet data {$u'_\pm|_{\G^0}$ on the transmission interface}.

The jump conditions in~\eqref{equ:pro trans} gives 
\begin{equation}\label{equ:u u gD}
u'_-(\om)
=
u'_+(\om) +g_D(\om)
\quad\text{on}
\quad \G^0, 
\end{equation}
and  
\begin{equation}\label{equ:S u p}
\underbrace{(\al_- \cS_- - \al_+ \cS_+)}_{=:\jum{\al \cS}}u'_+(\om)
=
g_N(\om) 
-
(\al_- \cS_-) g_D(\om) 
\quad\text{on}
\quad \G^0.
\end{equation}
We note that for a fixed $\om\in\Om$, the right
hand side $g_N(\om) - (\al_- S_-) g_D(\om) \in H^{-1/2}(\G^0)$. 
The solution $u'_+(\om)$ of~\eqref{equ:S u p} belongs to $H^{1/2}(\G^0)$.
The variational form for~\eqref{equ:S u p} is: 
Find $u'_+(\om)\in H^{1/2}(\G^0)$ satisfying 
\begin{equation}\label{equ:var for Buv}
B(u'_+(\om),v) 
=
\langle g_N(\om) - (\al_- \cS_-) g_D(\om) ,v \rangle
\qquad
\forall v\in H^{1/2}(\G^0), 
\end{equation}
with the bilinear form 
$B(\cdot,\cdot)$
and the duality pairing $\langle \cdot,\cdot \rangle$
given by
\begin{equation} \label{D-def}
B(v,w) 
:=
\int_{\G^0}
(\jum{\al \cS} v) w\,d\sigma
\quad
\text{and}
\quad
\langle g,v \rangle :=
\int_{\G^0} gv\,d\sigma
\quad
\forall v,w\in H^{1/2}(\G^0), \quad g \in H^{-1/2}(\G^0).
\end{equation}

We next show the continuity and ellipticity of the operator $\jum{\al \cS}$
which confirms existence of the unique solution of equation~\eqref{equ:S u p}
for a fixed arbitrary $\om$.
\begin{lemma}\label{lem:elipp alS}
The bilinear form 
$B(\cdot,\cdot):H^{1/2}(\G^0)\times H^{1/2}(\G^0)\goto\R$ is 
bounded, i.e.
\begin{equation}\label{equ:bounded s}
\snorm{B(v,w)}{×}
\le 
C_1
\norm{v}{H^{1/2}(\G^0)} 
\norm{w}{H^{1/2}(\G^0)}
\qquad
\forall v, w\in H^{1/2}(\G^0),
\end{equation}
and $H^{1/2}(\spheroid)$-elliptic, i.e.
\begin{equation}\label{equ:coer Duv}
B(v,v) \ge
C_2
\norm{v}{H^{1/2}(\G^0)}^2
\qquad
\forall v\in H^{1/2}(\G^0),
\end{equation}
where the positive constants $C_1$ and $C_2$ are independent of $v$.
\end{lemma}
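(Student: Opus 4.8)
The plan is to treat the two assertions separately: the bound \eqref{equ:bounded s} is essentially a consequence of the mapping properties of the boundary integral operators, while the ellipticity \eqref{equ:coer Duv} carries the genuine content of the lemma and will be obtained through an energy argument. For boundedness I would first recall that for the Laplacian in $\R^3$ the single layer operator $\cV$ is an isomorphism from $H^{-1/2}(\G^0)$ onto $H^{1/2}(\G^0)$, so that $\cV^{-1}:H^{1/2}(\G^0)\goto H^{-1/2}(\G^0)$ is bounded, while $\tfrac12 I\pm\cK:H^{1/2}(\G^0)\goto H^{1/2}(\G^0)$ are bounded as well (see \cite{HsiWen08}). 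Consequently each Dirichlet-to-Neumann operator $\cS_\pm$ defined in \eqref{equ:DtN 1}--\eqref{equ:DtN 2} maps $H^{1/2}(\G^0)$ boundedly into $H^{-1/2}(\G^0)$, and hence so does $\jum{\al\cS}=\al_-\cS_--\al_+\cS_+$. Writing $B(v,w)=\langle\jum{\al\cS}v,w\rangle$ as the $H^{-1/2}$--$H^{1/2}$ duality pairing and applying the duality estimate gives $\snorm{B(v,w)}{}\le\norm{\jum{\al\cS}v}{H^{-1/2}(\G^0)}\norm{w}{H^{1/2}(\G^0)}\le C_1\norm{v}{H^{1/2}(\G^0)}\norm{w}{H^{1/2}(\G^0)}$, which is \eqref{equ:bounded s}.

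For the ellipticity I would exploit the variational characterization of the Steklov--Poincar\'e operators through Dirichlet energies. Given $v\in H^{1/2}(\G^0)$, let $U=(U_-,U_+)$ be its harmonic extension, i.e.\ $U_\pm\in\cH^1(D_\pm^0)$ solve $\triangle U_\pm=0$ in $D_\pm^0$ with $U_\pm=v$ on $\G^0$ and with $U_+$ subject to the decay condition~\eqref{equ:b4}; by construction $\jum{U}_{\G^0}=U_--U_+=0$, so $U\in W_0$. The key identities, obtained from Green's first identity, read $\langle\cS_- v,v\rangle=\int_{D_-^0}\snorm{\nabla U_-}{}^2\,d\vecx$ and $\langle\cS_+ v,v\rangle=-\int_{D_+^0}\snorm{\nabla U_+}{}^2\,d\vecx$, where in the exterior identity the decay~\eqref{equ:b4} makes the surface term at infinity vanish and the outward normal of $D_+^0$ equals $-\vecn^0$, producing the minus sign. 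Combining these,
\begin{align*}
B(v,v)
&=\al_-\langle\cS_- v,v\rangle-\al_+\langle\cS_+ v,v\rangle
=\al_-\int_{D_-^0}\snorm{\nabla U_-}{}^2\,d\vecx+\al_+\int_{D_+^0}\snorm{\nabla U_+}{}^2\,d\vecx\\
&\ge\min\{\al_-,\al_+\}\,\snorm{U}{W_0}^2 .
\end{align*}

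It then remains to dominate $\norm{v}{H^{1/2}(\G^0)}$ by $\snorm{U}{W_0}$. Lemma~\ref{lem:nor snor} gives $\norm{U}{W_0}\le C\snorm{U}{W_0}$, and the trace theorem yields $\norm{v}{H^{1/2}(\G^0)}=\norm{U|_{\G^0}}{H^{1/2}(\G^0)}\le C_{\mathrm{tr}}\norm{U}{W_0}$; chaining these with the energy bound produces \eqref{equ:coer Duv} with $C_2=\min\{\al_-,\al_+\}\,C^{-1}C_{\mathrm{tr}}^{-2}$. The main obstacle I anticipate is the rigorous justification of the two energy identities in the weighted, unbounded setting: one must verify that $\cS_\pm v$ indeed coincides with the conormal derivative $\partial U_\pm/\partial\vecn$ of the harmonic extension, via the Calder\'on relations underlying \eqref{equ:DtN 1}--\eqref{equ:DtN 2}, and that Green's identity remains valid on the exterior domain $D_+^0$ with the surface integral at infinity vanishing---this is precisely where the weighted space $H_w^1(D_+^0)$ and the decay condition~\eqref{equ:b4} enter and must be handled with care. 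Once these identities are in place, the coercivity follows from Lemma~\ref{lem:nor snor} and the trace theorem as above.
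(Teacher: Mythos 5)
Your proof is correct, and while your boundedness argument coincides with the paper's (mapping properties of $\cV^{-1}$ and $\cK$, hence of $\cS_\pm$, plus duality), your ellipticity argument takes a genuinely different route. The paper stays entirely on the boundary: from the Calder\'on relations it derives the decompositions $\cS_- = \cD + (\frac12 I+\cK')\cV^{-1}(\frac12 I+\cK)$ and $\cS_+ = -\cD - (\frac12 I-\cK')\cV^{-1}(\frac12 I-\cK)$, so that $\inpro{\jum{\al\cS}v}{v}$ becomes $(\al_-+\al_+)\inpro{\cD v}{v}$ plus two nonnegative $\cV^{-1}$-terms; the semi-ellipticity of $\cD$ \cite[Corollary~6.25]{Steinbach08} supplies the seminorm $\snorm{v}{H^{1/2}(\G^0)}$, and the full norm is recovered from the $\cV^{-1}$-terms via the triangle inequality $\norm{v}{H^{1/2}(\G^0)}\le\norm{(\frac12 I+\cK)v}{H^{1/2}(\G^0)}+\norm{(\frac12 I-\cK)v}{H^{1/2}(\G^0)}$. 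You instead go into the domain: identifying $\cS_\pm v$ with the conormal derivatives of the harmonic extension $U=(U_-,U_+)\in W_0$ yields $B(v,v)=\al_-\norm{\nabla U_-}{L^2(D_-^0)}^2+\al_+\norm{\nabla U_+}{L^2(D_+^0)}^2\ge\min\{\al_-,\al_+\}\snorm{U}{W_0}^2$, and you close with Lemma~\ref{lem:nor snor} and the trace theorem. Both routes are standard and sound; yours buys transparent positivity with explicit constants and reuses the weighted-space machinery the paper built for exactly this setting, at the price of justifying the exterior Green identity and the identification $\cS_+v=\partial U_+/\partial\vecn$ in $H_w^1(D_+^0)$ --- an obstacle you rightly flag, and which is indeed surmountable: harmonic functions satisfying the decay \eqref{equ:b4} have gradients of order $\snorm{\vecx}{}^{-2}$, so the surface term on $\partial B_R$ vanishes as $R\to\infty$, and unique solvability of the exterior Dirichlet problem in $H_w^1$ is \cite[Theorem~2.10.14]{SauSch11}, already cited in the paper. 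The paper's route avoids any explicit discussion of behavior at infinity by outsourcing it to the known properties of $\cD$ and $\cV^{-1}$. One trivial bookkeeping point: with $\norm{U}{W_0}\le C\snorm{U}{W_0}$ your final constant should read $C_2=\min\{\al_-,\al_+\}\,C^{-2}C_{\mathrm{tr}}^{-2}$ rather than $C^{-1}C_{\mathrm{tr}}^{-2}$.
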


\begin{proof}
The boundedness of the bilinear form $B$ is derived directly from the 
boundedness of $\cV^{-1}$ and $\cK$.
To prove ellipticity 
we first note that the hypersingular operator $\cD$ is $H^{1/2}(\G^0)$-semi-elliptic 
for all closed interface $\G^0$, i.e.,
\begin{equation}\label{equ:semi ellip D}
\inprod{\cD v}{v}_{L_2(\G^0)}
\ge 
C
\snorm{v}{H^{1/2}(\G^0)}
\quad
\forall 
v\in H^{1/2}(\G^0);
\end{equation}
see e.g.~\cite[Corollary 6.25]{Steinbach08}. 
The Cauchy data $(u_-,\dfrac{\partial u_-}{\partial\bsb n})$
on $\G^0$ satisfy
\begin{equation}\label{equ:Calderon 1}
\begin{pmatrix}
u_-
\\[1.8ex]
\dfrac{\partial u_-}{\partial\bsb n}
\end{pmatrix}
=
\begin{pmatrix}
\dfrac{1}{2}I - \cK
&
\cV
\\
\cD
&
\dfrac{1}{2}I + \cK'
\end{pmatrix}
\begin{pmatrix}
u_-
\\[1.8ex]
\dfrac{\partial u_-}{\partial\bsb n}
\end{pmatrix}.
\end{equation}
Substituting~\eqref{equ:DtN 1} into the second equation
of~\eqref{equ:Calderon 1} gives
\[
\frac{\partial u_-}{\partial\bsb n}
=
\cD\, u_-
+
(\frac{1}{2}I + \cK')
\cV^{-1}
(\frac{1}{2}I + \cK)\,
u_-
\quad
\text{on}
\quad
\G^0.
\]
This equation and~\eqref{equ:DtN 1} yield
\[
\cS_-
=
\cD 
+ 
(\frac{1}{2}I + \cK')
\cV^{-1}
(\frac{1}{2}I + \cK).
\]
Noting that $\cK'$ is the adjoint operator of $\cK$,
we have
\begin{equation}\label{equ:S v v min}
\inpro{\cS_- v}{v}
=
\inpro{\cD v}{v}
+
\inpro{\cV^{-1}(\frac{1}{2}I + \cK)v}{(\frac{1}{2}I + \cK)v}
\quad
\forall v\in H^{1/2}(\G^0).
\end{equation}
Similarly, the exterior Dirichlet-to-Neumann operator 
$\cS_+$ satisfies
\[
\cS_+
=
-
\cD 
- 
(\frac{1}{2}I - \cK')
\cV^{-1}
(\frac{1}{2}I - \cK)
\]
and
\begin{equation}\label{equ:S v v plu}
\inpro{\cS_+ v}{v}
=
-
\inpro{\cD v}{v}
-
\inpro{\cV^{-1}(\frac{1}{2}I - \cK)v}{(\frac{1}{2}I - \cK)v}
\quad
\forall v\in H^{1/2}(\G^0).
\end{equation}
From~\eqref{equ:S v v min}, \eqref{equ:S v v plu}, \eqref{equ:semi ellip D}
 and noting 
the $H^{1/2}$-ellipticity of the inverse operator of $\cV$, we derive
\begin{align}
\inpro{\jum{\al \cS}v}{v}
&
=
(\al_- + \al_+)
\inpro{\cD v}{v}
+
\al_-
\inpro{\cV^{-1}(\frac{1}{2}I+\cK)v}{(\frac{1}{2}I+\cK)v}_{\G^0}
\notag
\\
&
\hspace{4cm}
+
\al_+
\inpro{\cV^{-1}(\frac{1}{2}I-\cK)v}{(\frac{1}{2}I-\cK)v}_{\G^0}
\notag
\\
&
\gtrsim
(\al_- + \al_+)
\snorm{v}{H^{1/2}(\G^0)}^2
+
\al_-
\norm{(\frac{1}{2} I+\cK)v}{H^{1/2}(\G^0)}^2
+
\al_+
\norm{(\frac{1}{2} I-\cK)v}{H^{1/2}(\G^0)}^2
\notag
\\
&
\gtrsim
\snorm{v}{H^{1/2}(\G^0)}^2
+
\norm{(\frac{1}{2} I+\cK)v}{H^{1/2}(\G^0)}^2
+
\norm{(\frac{1}{2} I-\cK)v}{H^{1/2}(\G^0)}^2.
\label{equ:al S inequality}
\end{align}
Applying the triangle inequality to the last two terms on 
the right hand side of the inequality above, we obtain
\begin{align}
\inpro{\jum{\al\cS}v}{v}
&
\gtrsim
\snorm{v}{H^{1/2}(\G^0)}^2
+
\norm{v}{H^{1/2}(\G^0)}^2
\gtrsim 
\norm{v}{H^{1/2}(\G^0)}^2
\quad
\forall v\in H^{1/2}(\G^0),
\notag
\end{align}
completing the proof of the lemma.
\end{proof}
We consider the tensor product 
operator {$\jum{\al\cS}^{(k)}:= \jum{\al\cS}\otimes \dots \otimes \jum{\al\cS}$} 
which is a linear mapping 
\[
{
\jum{\al\cS}^{(k)} 
:
H^{1/2}_{\rm{mix}}(\G^0\times \dots \times \G^0)
\goto
H^{-1/2}_{\rm{mix}}(\G^0\times \dots \times \G^0),
}
\]
see~\cite[Proposition~2.4]{PetersdorffSchwab06} for 
more details. Tensorization of 
equation \eqref{equ:S u p}
yields for {almost all} $\om\in\Om$
\begin{equation}\label{equ:L k otimes}
{\jum{\al\cS}^{(k)} 
\big{(}u'_+(\om)\otimes \dots \otimes u'_+(\om)\big{)}
=
\otimes_{i=1}^k \big{(}g_N(\om) - (\al_- \cS_-)g_D(\om)\big{)}
\quad
\text{in}\quad {H}_{\rm{mix}}^{-1/2}(\G^0\times\dots\times\G^0).}
\end{equation}
Taking the mean of~\eqref{equ:L k otimes}
yields {a deterministic $k$-th moment problem. In particular, for $k=2$ it reads:}
Find ${\Covv[u'_+]}(\vecx,\vecy)\in H^{1/2}_{\rm{mix}}{(\G^0\times \G^0)}$ satisfying
\begin{align}
(\jum{\al\cS}\otimes \jum{\al\cS})\,
{\Covv[u'_+]}(\vecx,\vecy)
&=
(\nabla_{\G,\vecx}\otimes \nabla_{\G,\vecy})\cdot 
\Big(
{\Covv[\kappa]}(\vecx,\vecy)
\jum{\al\nabla_{\G,\vecx} u^0(\vecx)}
\jum{\al\nabla_{\G,\vecy} u^0(\vecy)}
\Big)
\notag
\\
&
+
\big(
(\al_- \cS_-)\otimes(\al_-\cS_-)
\big)
\Big(
{\Covv[\kappa]}(\vecx,\vecy)
\jum{\frac{\partial u^0(\vecx)}{\partial\bsb n_{\vecx}}}
\jum{\frac{\partial u^0(\vecy)}{\partial\bsb n_{\vecy}}}
\Big)
\notag
\\
&
-
\big(
\nabla_{\G,\vecx}\cdot \otimes (\al_-\cS_-)
\big)
\Big(
{\Covv[\kappa]}(\vecx,\vecy)
\jum{\al\nabla_{\G,\vecx} u^0(\vecx)}
\jum{\frac{\partial u^0(\vecy)}{\partial\bsb n_{\vecy}}}
\Big)
\notag
\\
&
-
\big(
(\al_-\cS_-)\otimes \nabla_{\G,\vecy}\cdot
\big)
\Big(
{\Covv[\kappa]}(\vecx,\vecy)
\jum{\al\nabla_{\G,\vecy} u^0(\vecy)}
\jum{\frac{\partial u^0(\vecx)}{\partial\bsb n_{\vecx}}}
\Big).
\label{equ:u plus ten}
\end{align}
Similarly, we have
\begin{align}
(\jum{\al\cS}\otimes \jum{\al\cS})\,
{\Covv[u'_-]}(\vecx,\vecy)
&=
(\nabla_{\G,\vecx}\otimes \nabla_{\G,\vecy})\cdot 
\Big(
{\Covv[\kappa]}(\vecx,\vecy)
\jum{\al\nabla_{\G,\vecx} u^0(\vecx)}
\jum{\al\nabla_{\G,\vecy} u^0(\vecy)}
\Big)
\notag
\\
&
+
\big(
(\al_+\cS_+)\otimes(\al_+\cS_+)
\big)
\Big(
{\Covv[\kappa]}(\vecx,\vecy)
\jum{\frac{\partial u^0(\vecx)}{\partial\bsb n_{\vecx}}}
\jum{\frac{\partial u^0(\vecy)}{\partial\bsb n_{\vecy}}}
\Big)
\notag
\\
&
-
\big(
\nabla_{\G,\vecx}\cdot \otimes (\al_+\cS_+)
\big)
\Big(
{\Covv[\kappa]}(\vecx,\vecy)
\jum{\al\nabla_{\G,\vecx} u^0(\vecx)}
\jum{\frac{\partial u^0(\vecy)}{\partial\bsb n_{\vecy}}}
\Big)
\notag
\\
&
-
\big(
(\al_+\cS_+)\otimes \nabla_{\G,\vecy}\cdot
\big)
\Big(
{\Covv[\kappa]}(\vecx,\vecy)
\jum{\al\nabla_{\G,\vecy} u^0(\vecy)}
\jum{\frac{\partial u^0(\vecx)}{\partial\bsb n_{\vecx}}}
\Big).
\label{equ:u min ten}
\end{align}
{Denote $g^\kappa_+ := \mE[\otimes_{i=1}^k \big{(}g_N(\om) - (\al_- \cS_-)g_D(\om)\big{)}]$.}
Recalling~\eqref{equ:var for Buv}, the variational {formulation for finding $\cM^k[u'_+]$ reads:
Given $g_+^{\kappa}\in H_{\rm{mix}}^{-1/2}(\G^0\times\dots \times\G^0)$,
find $\cM^k[u'_+]\in H_{\rm{mix}}^{1/2}(\G^0\times\dots \times\G^0)$}
satisfying
\begin{equation}\label{equ:Buv tensor}
{\cB}({\cM^k[u'_+]},v)
=
\inprodd{g_+^{\kappa}}{v}
\qquad
\forall
v\in {H}_{\rm{mix}}^{1/2}({\G^0\times\dots \times\G^0}), 
\end{equation}
where $\cB(\cdot,\cdot) = \inprodd{{\jum{\al\cS}^{(k)}}\cdot}{\cdot}$ 
is a bilinear form and $\inprodd{\cdot}{\cdot}$ 
is the $H_{\rm{mix}}^{-1/2}({\G^0\times\dots \times\G^0})$ -- $ H_{\rm{mix}}^{1/2}({\G^0\times\dots \times\G^0})$ 
duality pairing  obtained by tensorisation of 
$B(\cdot,\cdot)$ and 
$\inpro{\cdot}{\cdot}$ 
from \eqref{D-def}.
Proposition 2.4 in \cite{PetersdorffSchwab06} implies

\begin{lemma}\label{lem:bilinear DDuv}
The bilinear form 
$\cB(\cdot,\cdot): H_{\rm{mix}}^{1/2}({\G^0\times\dots \times\G^0})\times
H_{\rm{mix}}^{1/2}({\G^0\times\dots \times\G^0})\goto \R$ is bounded and 
$H_{\rm{mix}}^{1/2}({\G^0\times\dots \times\G^0})$-elliptic, i.e.,
\begin{equation}\label{equ:bounded DDuv}
{\cB(v,w)}{×}
\le 
C_1\norm{v}{H_{\rm{mix}}^{1/2}({\G^0\times\dots \times\G^0})} 
\norm{w}{H_{\rm{mix}}^{1/2}({\G^0\times\dots \times\G^0})},
\end{equation}
and
\begin{equation}\label{equ:coer DDuv}
C_2\norm{v}{H_{\rm{mix}}^{1/2}({\G^0\times\dots \times\G^0})}^2
\le
\cB(v,v) 
\end{equation}
for all $v,w\in H_{\rm{mix}}^{1/2}({\G^0\times\dots \times\G^0})$.
\end{lemma}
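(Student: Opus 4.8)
The plan is to deduce the assertion from the single-variable result Lemma~\ref{lem:elipp alS} via the tensorisation principle of \cite[Proposition~2.4]{PetersdorffSchwab06}. Recall that the mixed space $H^{1/2}_{\rm mix}(\G^0\times\dots\times\G^0)$ is, by construction, isometrically isomorphic to the $k$-fold Hilbert tensor power $H^{1/2}(\G^0)^{(k)}$, and that $\jum{\al\cS}^{(k)}$ is precisely the corresponding tensor power of the single operator $\jum{\al\cS}$. Thus it suffices to show that boundedness and ellipticity of a bounded, symmetric, positive-definite operator are inherited by its tensor powers, with the constants replaced by their $k$-th powers (which I then relabel $C_1$, $C_2$).

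First I would establish boundedness, which tensorises by an elementary argument. On elementary tensors $v=v_1\otimes\dots\otimes v_k$ and $w=w_1\otimes\dots\otimes w_k$ one has $\cB(v,w)=\prod_{i=1}^k B(v_i,w_i)$ and $\norm{v}{H^{1/2}_{\rm mix}(\G^0\times\dots\times\G^0)}=\prod_{i=1}^k\norm{v_i}{H^{1/2}(\G^0)}$, so \eqref{equ:bounded s} immediately gives $\snorm{\cB(v,w)}{}\le C_1^k\,\norm{v}{H^{1/2}_{\rm mix}(\G^0\times\dots\times\G^0)}\norm{w}{H^{1/2}_{\rm mix}(\G^0\times\dots\times\G^0)}$. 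Since finite linear combinations of elementary tensors are dense in $H^{1/2}_{\rm mix}(\G^0\times\dots\times\G^0)$, the bound \eqref{equ:bounded DDuv} follows by bilinearity and continuity.

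The ellipticity \eqref{equ:coer DDuv} is the substantive step, and here I would exploit the symmetry of $B$. The operator $\jum{\al\cS}=\al_-\cS_- - \al_+\cS_+$ is self-adjoint: by \eqref{equ:S v v min} and \eqref{equ:S v v plu} each Steklov--Poincar\'e operator $\cS_\pm$ is represented by a symmetric form (using self-adjointness of $\cD$, $\cV^{-1}$ and the fact that $\cK'$ is the adjoint of $\cK$), hence $B(\cdot,\cdot)$ is symmetric. Together with \eqref{equ:bounded s} and \eqref{equ:coer Duv}, this means $\jum{\al\cS}$ is represented, relative to the $H^{1/2}(\G^0)$ inner product, by a bounded self-adjoint operator $A$ whose spectrum lies in $[C_2,C_1]\subset(0,\infty)$. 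The tensor power $A^{(k)}$ representing $\jum{\al\cS}^{(k)}$ then has spectrum contained in $[C_2^k,C_1^k]$, since the eigenvalues of a tensor power are products of the eigenvalues of the factors. Evaluating the associated quadratic form yields $\cB(v,v)=\inpro{A^{(k)}v}{v}_{H^{1/2}_{\rm mix}(\G^0\times\dots\times\G^0)}\ge C_2^k\,\norm{v}{H^{1/2}_{\rm mix}(\G^0\times\dots\times\G^0)}^2$, which is \eqref{equ:coer DDuv}.

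I expect the main obstacle to be exactly this ellipticity of the tensor product: unlike boundedness, coercivity does \emph{not} tensorise for general operators, and a naive termwise estimate on a representation $v=\sum v^{(1)}\otimes\dots\otimes v^{(k)}$ fails because the cross terms cannot be controlled. The symmetry of $\jum{\al\cS}$, which reduces the question to the purely spectral statement about the self-adjoint operator $A$ above, is what makes the argument go through; this is precisely the content of \cite[Proposition~2.4]{PetersdorffSchwab06}, which I would invoke to conclude.
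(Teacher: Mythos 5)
Your strategy is in essence the paper's own: the paper's entire proof of this lemma is the single sentence invoking \cite[Proposition~2.4]{PetersdorffSchwab06}, so by unfolding the tensorisation argument you are reproving the content of that proposition rather than taking a different route. Your key structural observation is correct and worth having explicitly: coercivity, unlike bounded invertibility, does \emph{not} tensorise for general operators, and it is the symmetry of $\jum{\al\cS}$ --- which indeed follows from the representations \eqref{equ:S v v min} and \eqref{equ:S v v plu}, the self-adjointness of $\cD$ and $\cV^{-1}$, and the fact that $\cK'$ is the adjoint of $\cK$ --- that reduces \eqref{equ:coer DDuv} to a spectral statement about a self-adjoint operator $A$ with spectrum in $[C_2,C_1]$, whose $k$-fold tensor power then has spectrum in $[C_2^k,C_1^k]$. (Phrase this via spectra rather than eigenvalues, since $A$ need not have point spectrum; alternatively write $A^{(k)}=(A^{1/2})^{(k)}(A^{1/2})^{(k)}$ and use invertibility of $(A^{1/2})^{(k)}$.) This is precisely the mechanism hidden behind the paper's terse citation.

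One step as written is, however, not valid: for boundedness you establish the estimate on elementary tensors and then assert that \eqref{equ:bounded DDuv} ``follows by bilinearity and continuity'' since such tensors span a dense subspace. Continuity of $\cB$ is exactly what is being proved, so it cannot be used to pass to the closure, and the implication genuinely fails in general: a bilinear form can satisfy the crossnorm bound on elementary tensors yet be unbounded on the Hilbert tensor product. For instance, on $\ell^2\otimes\ell^2$ (Hilbert--Schmidt matrices) the form $b(U,W)=\mathrm{tr}(U)\,\mathrm{tr}(W)$ satisfies $|b(u\otimes v,\,w\otimes t)|=|\langle u,v\rangle\,\langle w,t\rangle|\le\|u\otimes v\|\,\|w\otimes t\|$, but taking $U=W=n^{-1/2}\sum_{i=1}^n e_i\otimes e_i$ gives $\|U\|=1$ and $b(U,U)=n\to\infty$. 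The standard repair is to prove the operator bound $\|\jum{\al\cS}^{(k)}\|\le\|\jum{\al\cS}\|^k$ directly, by factoring $\jum{\al\cS}^{(k)}$ as the composition of the $k$ commuting maps $I\otimes\cdots\otimes\jum{\al\cS}\otimes\cdots\otimes I$, each of norm $\|\jum{\al\cS}\|$ (checked on an orthonormal basis of the remaining factors); equivalently, in your spectral framework, the upper bound $\|A^{(k)}\|=\|A\|^k$ for self-adjoint $A$ follows from the same spectral argument you use for the lower bound. With this repair your argument is complete and coincides with what \cite[Proposition~2.4]{PetersdorffSchwab06} delivers.
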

By Lemma~\ref{lem:bilinear DDuv} there exists a unique 
solution of~\eqref{equ:Buv tensor}.

\section{Examples}\label{sec:Examp}

In this section, we consider the transmission 
problem~\eqref{equ:lap equ}--\eqref{equ:inft cond 1}
where the random interface $\G(\om)$ is given by
\[
\G(\om)
=
\{
\vecx + \eps\kappa(\vecx,\om)\vecn(\vecx)
:
\vecx\in\mS
\}.
\]
Here, the reference interface $\G^0$ is the unit sphere $\mS$.
The perturbation parameter $\kappa(\vecx,\om) = a(\om)$, where 
$a(\om)$ is uniformly distributed in $[-1,1]$. The mean value
$\mE [\kappa] = 0$ and the covariance 
${\Covv[\kappa](\vecx,\vecy) = 
\Corr[\kappa](\vecx,\vecy)} = 1/3$. The interface 
$\G(\om)$ is a sphere of radius $R(\om) = 1+\eps a(\om)$. 

\subsection{Analytic example}

Firstly, we choose the right hand side $f$ to be
\[
f(\vecx)
=
\begin{cases}
(4 r_{\vecx}^2 - 1)^2
&
\text{if}
\
0\le r_{\vecx}\le 1/2,
\\
0
&
\text{if}
\
1/2\le r_{\vecx},
\end{cases}
\]
where $r_{\vecx} = \snorm{\vecx}{×}$.
Then solution of the transmission problem with respect to the 
random
interface $\G(\om)$ can be analytically computed as follows:
\begin{equation}\label{equ:num exa 1}
u(\vecx,\om)
=
\begin{cases}
\frac{1}{\al_-}
(
\frac{8}{21} r_{\vecx}^6 - \frac{2}{5} r_{\vecx}^4 + \frac{r_{\vecx}^2}{6}
)
-\frac{3}{105\al_-}r_{\vecx}
-
\frac{23}{840\al_-}
+
\frac{\al_+-\al_-}{105\al_-\al_+ R(\om)}
&
\text{if}
\
0\le r_{\vecx}\le \frac{1}{2},
\\
-
\frac{1}{105\al_- r_{\vecx}}
+
\frac{\al_+-\al_-}{105\al_-\al_+ R(\om)}
&
\text{if}
\
\frac{1}{2}\le r_{\vecx}\le R(\om),
\\
-
\frac{1}{105\al_+ r_{\vecx}}
&
\text{if}
\
R(\om)\le r_{\vecx}.
\end{cases}
\end{equation}
In particular, the exact solution $u^0$ of the transmission problem on the reference
interface $\G^0$ is given by~\eqref{equ:num exa 1} where $R(\om) = 1$, i.e.,
\begin{equation}\label{equ:u0 num exp}
u^0(\vecx)
=
\begin{cases}
\frac{1}{\al_-}
(
\frac{8}{21} r_{\vecx}^6 - \frac{2}{5} r_{\vecx}^4 + \frac{r_{\vecx}^2}{6}
)
-\frac{3}{105\al_-}r_{\vecx}
-
\frac{23}{840\al_-}
+
\frac{\al_+-\al_-}{105\al_-\al_+}
&
\text{if}
\
0\le r_{\vecx}\le \frac{1}{2},
\\
-
\frac{1}{105\al_- r_{\vecx}}
+
\frac{\al_+-\al_-}{105\al_-\al_+ }
&
\text{if}
\
\frac{1}{2}\le r_{\vecx}\le {1},
\\
-
\frac{1}{105\al_+ r_{\vecx}}
&
\text{if}
\
{1}\le r_{\vecx}.
\end{cases}
\end{equation}
Noting~\eqref{equ:num exa 1}
and using simple calculation, we obtain
\begin{equation}\label{equ:Eu1 num}
{\mE [u(\vecx,\cdot)]}
=
\begin{cases}
u^0(\vecx) + 
\frac{\al_+-\al_-}{105\al_-\al_+}
\frac{\ln(1+\eps) - \ln(1-\eps)}{2\eps}
&
\text{if}
\
0\le r_{\vecx} < 1,
\\
u^0(\vecx)
&
\text{if}
\
1 < r_{\vecx}.
\end{cases}
\end{equation}
Elementary calculus reveals that 
$\frac{\ln(1+\eps) - \ln(1-\eps)}{2\eps}
= \sum_{n=1}^\infty
 \frac{\eps^{2n}}{2n+1}$.
Therefore, the mean value $\mE [u]$ in~\eqref{equ:Eu1 num}
agrees with our result~\eqref{equ:Exp} in Lemma~\ref{lem:mean cov}.
The linearized error appears in this example to be $\mathcal{O}(\eps^2)$.

We then compute the covariance of the solution $u$ by  
elementary calculations, noting~\eqref{equ:num exa 1},
to obtain
\begin{equation}\label{equ:cov u num 1}
\Covv_u(\vecx,\vecy)
=
\begin{cases}
\frac{1}{3}\frac{\jum{\al}^2}{(105\al_-\al_+)^2}\,\eps^2 
+
\cO(\eps^4)
&
\text{if }
r_{\vecx}<1 {\mbox{ and }} r_{\vecy} < 1,
\\
0
&
\text{if }
r_{\vecx}>1 {\mbox{ or }}  r_{\vecy} > 1.
\end{cases}
\end{equation}

We test accuracy of our shape calculus method by computing the covariance
of $u$ via covariance of the shape derivative. Noting~\eqref{equ:u0 num exp},
we first solve equations~\eqref{equ:u plus ten} and~\eqref{equ:u min ten}
to obtain {$\Covv[u_+']$ and $\Corr_{u_-'}$}. In this 
example, these equations can be solved exactly and 
\[
{\Covv[u_-']}
=
\frac{1}{3}\frac{\jum{\al}^2}{(105\al_-\al_+)^2}
\quad
\text{and}
\quad
{\Covv[u_+']}
=
0.
\]
Applying~\eqref{equ:u tensor ga}, we obtain
\[
{\Covv[u'](\vecx,\vecy)}
=
\begin{cases}
\frac{1}{3}\frac{\jum{\al}^2}{(105\al_-\al_+)^2}
&
\text{if }
r_{\vecx}<1 {\mbox{ and }} r_{\vecy} < 1
\\
0
&
\text{if }
r_{\vecx}>1 {\mbox{ or }} r_{\vecy} > 1.
\end{cases}
\]
This and~\eqref{equ:cov u num 1} agree with our theoretical 
result~\eqref{equ:cov app} and the linearized error in this 
example is 
$\cO(\eps^4)$.

%
%
%

\subsection{{Numerical example}}

Secondly, we solve the problem~\eqref{equ:lap equ}--\eqref{equ:inft cond 1}
where 
the right hand side $f$ is given by
\begin{align}
f(\vecx)
&
=
2
\,
[x_1^2 + x_2^2 +(x_3-1)^2]^{-1/2}
\,
(1-\snorm{\vecx}{×}^2)
\notag
\\
&
\quad 
-4
\,
[x_1^2 + x_2^2 +(x_3-1)^2]^{-1/2}
\,
(\snorm{\vecx}{×}^2-x_3)
-
6
[x_1^2 + x_2^2 +(x_3-1)^2]^{1/2}.
\label{equ:f num}
\end{align}
The deterministic solution of the transmission problem 
with the reference interface $\G^0 = \mS$ is then 
\begin{equation}\label{equ:u num}
\begin{aligned}
u_-(\vecx)
&
=
\frac{1}{\al_-}\,
[x_1^2 + x_2^2 +(x_3-1)^2]^{1/2}
\,
(1-\snorm{\vecx}{×}^2),
\quad
\vecx\in D^0_-
\\
u_+(\vecx)
&
=
\frac{1}{\al_+}\,
[x_1^2 + x_2^2 +(x_3-1)^2]^{1/2}
\,
(1-\snorm{\vecx}{×}^2),
\quad
\vecx\in D^0_+.
\end{aligned}
\end{equation}
Following the method discussed in Section~\ref{sec:Shape cal}, covariance
of the solution is approximated by covariance of the shape derivative
(see Lemma~\ref{lem:mean cov}), which can be obtained by solving the 
equations~\eqref{equ:u plus ten} and~\eqref{equ:u min ten}.
Note here that these equations are given on  the reference interface 
$\G^0 = \mS$. The right hand sides and the solutions of 
these equations belong to the 
tensor space $H_{\rm mix}^{2-\sigma}({\G^0\times\G^0})$ for any $\sigma >0$.
To solve these equations numerically we use the hyperbolic cross 
tensor approximation spaces of spherical harmonics
{which are defined by
 
\begin{equation}\label{equ:S delta L}
S_p^\delta
:=
\spann
\big\{
\bsb\Ylm: \bsb\ell\in\delta_p,\ m_i= -\ell_i,\dots, \ell_i\ \text{for}
\ i = 1,2
\big\},
\end{equation}
where 
\begin{equation}\label{equ:delta L}
\delta_p
:=
\bigg\{
\bsb\ell = (\ell_1,\ell_1)\in\N^2:
\prod_{i=1}^2{(1+\ell_i)}
\le
1+p
\bigg\}.
\end{equation}
The Galerkin method was used to find the approximate solutions 
$u'_p\in S_p^\delta$ of~\eqref{equ:u plus ten} and~\eqref{equ:u min ten}.
It has been shown in~\cite{ChPham12_prep} that the use of the space 
$S_p^\delta$ 
yields the convergence rate of $p^{-(2-\sigma - t)}$ and demands only 
$\mathcal O\big(p^2 \log p\big)$  unknowns, where $t$ is the order of the Sobolev norm in which the errors are computed. \ac{The same convergence rate $p^{-(2-\sigma - t)}$ is } achieved when using the standard full 
tensor product approximation \ac{of degree $p$}
which meanwhile requires $\mathcal{O}\big(p^{4}\big)$ unknowns.
We then compute 
the variance of $u'(\vecx)$ at {three points 
$\vecx = (0,0,0.2)$, $(0,0,0.5)$ and $(0,0,5)$ inside and outside the unit sphere}. 
The  {convergence curves for the absolute error
\[
|\Varr[u'](\vecx) - \Varr[u'_p](\vecx) |
\]
with respect to the order of the hyperbolic cross $p$ are presented in Fig \ref{fig:1}.
}

\begin{figure}[t]
	\centering
	\includegraphics[width=.6\textwidth]{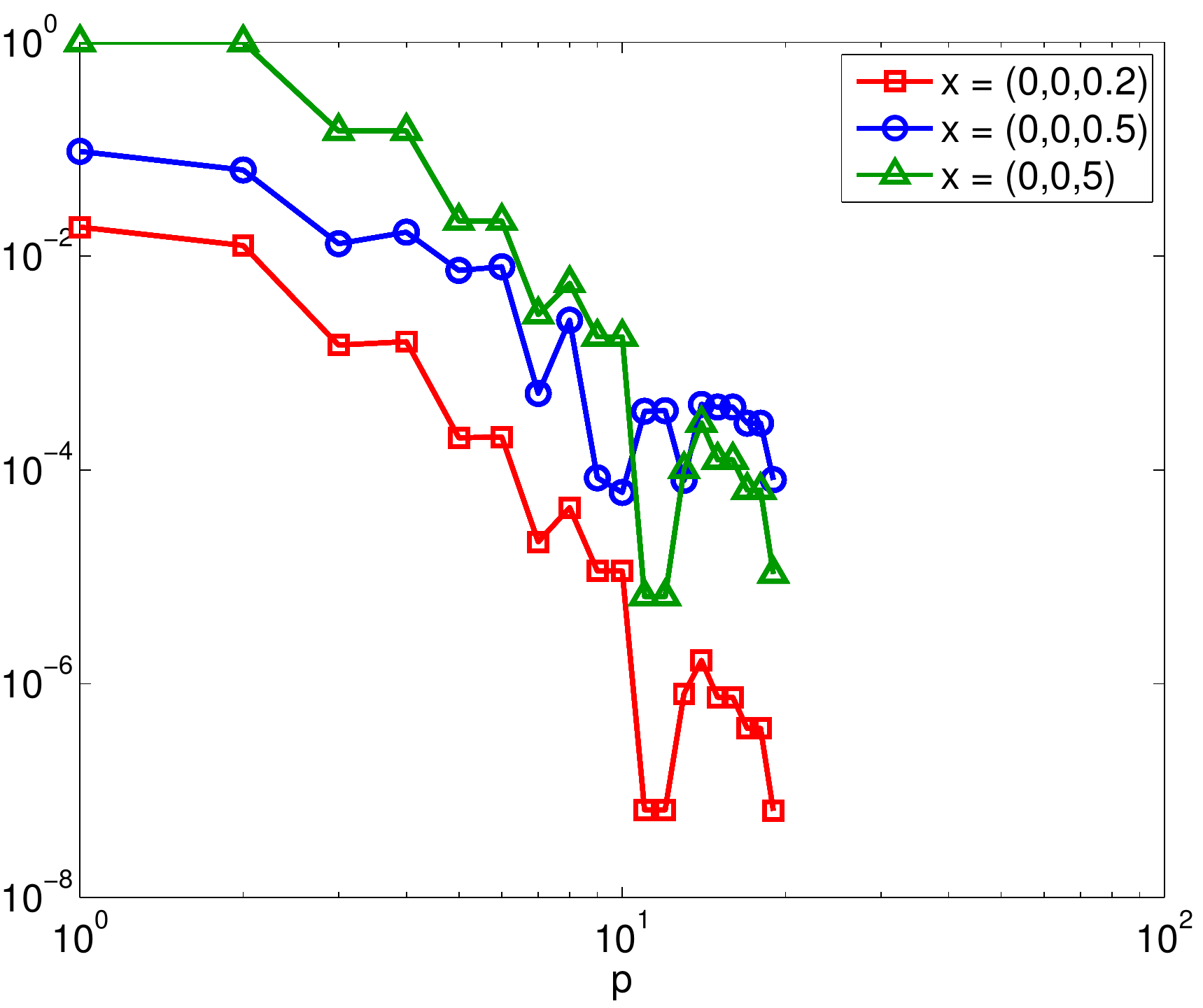}
	\caption{
{Convergence of the absolute error $|\Varr[u'](\vecx) - \Varr[u'_p](\vecx) |$ for three points $\vecx$ inside and outside the unit sphere with respect to the order of the hyperbolic cross $p$.}}
	\label{fig:1}
 \end{figure}

\bibliographystyle{myabbrv}

\end{document}